\newfont{\cyr}{wncyr10 scaled 1100}
\theoremstyle{plain}
\newtheorem{theorem}{Theorem}[section]
\newtheorem{corollary}[theorem]{Corollary}
\newtheorem{lemma}[theorem]{Lemma}
\newtheorem{proposition}[theorem]{Proposition}
\newtheorem{conjecture}[theorem]{Conjecture}
\theoremstyle{definition}
\newtheorem{definition}[theorem]{Definition}
\newtheorem{examplewr}[theorem]{Examples}
\newtheorem{ass}[theorem]{Assumption}
\theoremstyle{remark}
\newtheorem{obswr}[theorem]{Observation}
\newtheorem{remarkwr}[theorem]{Remark}
\newenvironment{remark}{\begin{remarkwr}\begin{upshape}}{\end{upshape}\end{remarkwr}}
\newcommand{\Emb}{\operatorname{Emb}}
\newcommand{\cF}{\mathcal F}
\newcommand{\V}{\mathcal V}
\newcommand{\E}{\mathcal E}
\newcommand{\g}{\gamma}
\newcommand{\G}{\Gamma}
\newcommand{\Gap}{\Gamma_0^D(M\ell)}
\newcommand{\Ga}{\Gamma_0^D(M)}
\newcommand{\hGa}{\hat{\Gamma}^D_0(M)}
\newcommand{\Q}{\mathbb{Q}}
\newcommand{\Z}{\mathbb{Z}}
\newcommand{\F}{\mathbb{F}}
\newcommand{\C}{\mathbb{C}}
\newcommand{\PP}{\mathbb{P}}
\newcommand{\Sel}{\operatorname{Sel}}
\newcommand{\Sha}{\mbox{\cyr{X}}}
\newcommand{\Gal}{\operatorname{Gal\,}}
\newcommand{\GL}{\operatorname{GL}}
\newcommand{\Div}{\operatorname{Div}}
\newcommand{\End}{\operatorname{End}}
\newcommand{\ord}{{\operatorname{ord}}}
\newfont{\gotip}{eufb10 at 12pt}
\newcommand{\cO}{{\mathcal O}}
\newcommand{\om}{{\omega}}
\newcommand{\ra}{\rightarrow}
\newcommand{\lra}{\longrightarrow}
\newcommand{\SL}{{\mathrm {SL}}}
\newcommand{\Pic}{{\mathrm{Pic}}}
\newcommand{\R}{{\mathbb R}}
\newcommand{\M}{{\mathrm{M}}}
\newcommand{\PGL}{{\mathrm{PGL}}}
\newcommand{\m}{\mathfrak{m}}
\newcommand{\p}{\mathfrak{p}}
\newcommand{\e}{\epsilon}
\newcommand{\W}{\mathcal W}
\def \mint {\times \hskip -1.1em \int}
\newcommand{\T}{\mathbb T}
\DeclareMathOperator{\Hom}{Hom}
\newcommand{\res}{\mathrm{res}}
\newcommand{\fr}{\mathfrak}
\newcommand{\cl }{\mathcal}
\newcommand{\longmono}{\mbox{$\lhook\joinrel\longrightarrow$}}
\newcommand{\longepi}{\mbox{$\relbar\joinrel\twoheadrightarrow$}}
\newcommand{\smallmat}[4]{\bigl(\begin{smallmatrix}#1&#2\\#3&#4\end{smallmatrix}\bigr)}
\begin{document}

\title{Special values of $L$-functions and the arithmetic of Darmon points}

\author{Matteo Longo, Victor Rotger and Stefano Vigni}

\thanks{The research of the second author is financially supported by DGICYT Grant MTM2009-13060-C02-01.}

\begin{abstract}
Building on our previous work on rigid analytic uniformizations, we introduce Darmon points on Jacobians of Shimura curves attached to quaternion algebras over $\Q$ and formulate conjectures about their rationality properties. Moreover, if $K$ is a real quadratic field, $E$ is an elliptic curve over $\Q$ without complex multiplication and $\chi$ is a ring class character such that $L_K(E,\chi,1)\not=0$ we prove a Gross--Zagier type formula relating Darmon points to a suitably defined algebraic part of $L_K(E,\chi,1)$; this generalizes results of Bertolini, Darmon and Dasgupta to the case of division quaternion algebras and arbitrary characters. Finally, as an application of this formula, assuming the rationality conjectures for Darmon points we obtain vanishing results for Selmer groups of $E$ over extensions of $K$ contained in narrow ring class fields when the analytic rank of $E$ is zero, as predicted by the Birch and Swinnerton-Dyer conjecture.
\end{abstract}

\address{Dipartimento di Matematica Pura e Applicata, Universit\`a di Padova, Via Trieste 63, 35121 Padova, Italy}
\email{mlongo@math.unipd.it}
\address{Departament de Matem\`atica Aplicada II, Universitat Polit\`ecnica de Catalunya, C. Jordi Girona 1-3, 08034 Barcelona, Spain}
\email{victor.rotger@upc.edu}
\address{Departament de Matem\`atica Aplicada II, Universitat Polit\`ecnica de Catalunya, C. Jordi Girona 1-3, 08034 Barcelona, Spain}
\curraddr{Department of Mathematics, King's College London, Strand, London WC2R 2LS, United Kingdom}
\email{stefano.vigni@kcl.ac.uk}

\subjclass[2000]{14G35, 11G40}

\keywords{Darmon points, special values of $L$-series, Selmer groups}

\maketitle

\section{Introduction} \label{intro}

The purpose of this article is threefold. Firstly, following \cite{Dar1}, \cite{Das} and \cite{Gr} and building on our previous work \cite{LRV} on rigid analytic uniformizations, we introduce a special supply of points on Jacobians of Shimura curves which we call \emph{Darmon points}, after the foundational work \cite{Dar1} of Henri Darmon in his investigation of counterparts in the real setting of the theory of complex multiplication. To be in line with the current language, our points could also be called ``Stark--Heegner points'' (as in \emph{loc. cit.}), but we feel that the new terminology we adopt here is more representative of the genesis of our constructions. Secondly, we prove an avatar of the Gross--Zagier formula relating Darmon points to the special values of twists by ring class characters of base changes to real quadratic fields $K$ of $L$-functions of elliptic curves $E$ over $\Q$, provided the analytic rank of $E$ over $K$ is $0$. Finally, under this analytic condition we use this formula to prove vanishing results for (twisted) Selmer groups of elliptic curves over narrow ring class fields of real quadratic fields. Let us describe first the motivation and background and then our results more in detail.

Let $A_{/\Q}$ be an elliptic curve of conductor $N_A$. Throughout this work we stay for simplicity in the realm of elliptic curves, but the reader should find no difficulties in extending our statements to the more general setting of a modular abelian variety $A_{/\Q}$ associated with a normalized newform $f_A\in S_2(\Gamma_0(N))$ with trivial nebentypus and Fourier coefficients living in a totally real number field of arbitrary degree; the reader may consult \cite{GuQu} and the references therein for the necessary background. 

Let $K$ be a real quadratic field of discriminant $\delta_K$ with $(N_A,\delta_K)=1$. Assume that there exists a prime $\ell$ which is inert in $K$ and divides $N_A$ exactly. If one further assumes the \emph{Heegner condition} that all primes dividing $N_A/\ell$ be split in $K$ then the sign of the functional equation of the $L$-function $L_K(A,s)$ of $A$ over $K$ is $-1$ and the Birch and Swinnerton-Dyer conjecture predicts that the rank of the Mordell--Weil group $A(H)$ is at least $[H:K]$ for all (narrow) ring class fields $H$ of $K$.

Under these conditions, in \cite{Dar1} Darmon introduced a family of \emph{local} points on $A$ over the unramified quadratic extension of $\Q_\ell$ and conjectured that they are in fact \emph{global}. More precisely, he predicted that his points are rational over narrow ring class fields of $K$ and satisfy properties which are analogous to those enjoyed by classical Heegner points over abelian extensions of imaginary quadratic fields (see \cite{BD2} for results in this direction); these points should account for the expectations of high rank described above. Darmon's points were later lifted from elliptic curves to certain quotients of classical modular Jacobians by Dasgupta in \cite{Das}; this was achieved by proving a rigid analytic uniformization result for modular Jacobians which can be phrased as an equality of $\mathcal L$-invariants and turns out to be a strong form of a theorem of Greenberg and Stevens (\cite{GS}). Both Darmon's and Dasgupta's constructions, relying heavily on the theory of modular symbols, do not lend themselves to straightforward extensions to more general settings in which the sign of the functional equation of $L_K(A,s)$ is still $-1$ (so that a similar family of points should exist) but the Heegner condition is not verified (cf. \cite[Conjecture 3.16]{Dar} or below for details). To circumvent this problem, in \cite{Gr} M. Greenberg reinterpreted Darmon's theory in terms of group cohomology; this allowed him to conjecturally define local points on $A$, generalizing Darmon's original constructions to much broader arithmetic contexts. It must be noted that Greenberg's definitions are conditional on the validity of an unproved statement (\cite[Conjecture 2]{Gr}); this conjecture (over $\Q$) has been proved by the authors of the present paper in \cite{LRV} and, independently and by different methods, by Dasgupta and Greenberg in \cite{DG}. 

The main result of \cite{LRV}, of which Greenberg's conjecture is a corollary, provides an explicit rigid analytic uniformization of the maximal toric quotient of the Jacobian of a Shimura curve attached to a division quaternion algebra over $\Q$ at a prime dividing exactly the level, and can be viewed as complementary to the classical theorem of \v{C}erednik and Drinfeld that gives rigid uniformizations at primes dividing the discriminant. Moreover, it extends to arbitrary quaternion algebras the results of Dasgupta for classical modular curves. 

In order to describe the content of this article we need to introduce some notation, which will be used throughout our work. As above, let $K$ be a real quadratic field of discriminant $\delta_K$, which we embed into the real numbers by using one of its two archimedean places $\infty_1,\infty_2$, and let $\ell $ be a prime number that remains inert in $K$. Let $\cO_K$ be the ring of integers of $K$ and for every integer $c\geq1$ let $\cO_c=\Z+c\cO_K$ be the order of $K$ of conductor $c$. Setting $\widehat{\cO}_c:=\cO_c\otimes_\Z\widehat\Z$ (with $\widehat\Z$ being the profinite completion of $\Z$), let
\[ \Pic^+(\cO_c)=\widehat{\cO}_c^\times K_{\infty,+}^\times\backslash\mathbb A_K^\times/K^\times \]
be the narrow (or strict) class group of $\cO_c$, where $\mathbb A_K$ is the ring of adeles of $K$ and $K_{\infty,+}^\times$ is the connected component of the identity in $K^\times_{\infty_1}\times K^\times_{\infty_2}$. By class field theory, $\Pic^+(\cl O_c)$ is canonically isomorphic to the Galois group $G_c:=\Gal(H_c/K)$ where $H_c$ is the narrow ring class field of $K$ of conductor $c$.

Let $D\geq1$ be the square-free product of an even number of primes and $M\geq1$ be a positive integer prime to $D$ such that $\ell\nmid DM$. Let $X_0^D(M)$ and $X_0^D(M\ell)$ denote the Shimura curves attached to the indefinite quaternion algebra $B$ of reduced discriminant $D$ and choices of Eichler orders $R'\subset R$ of levels $M$ and $M\ell$, respectively (cf. \cite[Ch. IV]{Dar}).

In the first part of this paper we introduce local Darmon points on the $\ell$-new quotient $J_0^D(M\ell)^{\text{$\ell$-new}}$ of the Jacobian of $X_0^D(M\ell)$; if $A_{/\Q}$ is an elliptic curve of conductor $DM\ell$ then we know by modularity and by the Jacquet--Langlands correspondence that $A$ is a quotient of $J_0^D(M\ell)^{\text{$\ell$-new}}$ and our points lift from $A$ those defined by Greenberg. Following \cite{Dar1}, \cite{Das} and \cite{Gr}, we formulate global rationality and reciprocity conjectures for them. All definitions and conjectures, together with a quick review of the main results of \cite{LRV}, can be found in Section \ref{sec-das} (see, in particular, \S \ref{SH-subsection}). We remark that the constructions and the techniques introduced in \cite{LRV} and in the present paper have been used in \cite{LV2} to prove that linear combinations of Darmon points on elliptic curves weighted by certain genus characters of $K$ are rational over the genus fields of $K$ predicted by Conjecture \ref{SH-conjecture}. This extends to an arbitrary quaternionic setting the theorem on the rationality of Stark--Heegner points obtained by Bertolini and Darmon in \cite{BD2}, and at the same time gives evidence for the rationality conjectures formulated here and in \cite{Gr}.

A crucial role in the definition of Darmon points is played by the group
\[ \Gamma_\ell:=\bigl(R\otimes\Z[1/\ell]\bigr)^\times_1 \]
of elements of reduced norm $1$ of $R\otimes\Z[1/\ell]$, which can be embedded in $\SL_2(\Q_\ell)$ and we call the \emph{Ihara group} at $\ell$ (see \S \ref{ihara-subsec}). The abelianization $\Gamma_\ell^{\rm ab}$ of $\Gamma_\ell$ is well known to be finite, and we devote Section \ref{ihara-sec} to the study of its support. In the absence of the counterparts for Shimura curves associated with division quaternion algebras of the results proved by Ribet in \cite{Ri} (this being due to the lack of a full analogue for general Shimura curves of the so-called Ihara's Lemma for modular curves), we invoke a theorem of Diamond and Taylor (\cite{DT}) on the Eisenstein-ness of certain maximal ideals of Hecke algebras to get a bound on the support of $\Gamma_\ell^{\rm ab}$ which is fine enough for our arithmetic purposes. The reader can find all details in \S \ref{sec-size} (see, in particular, Theorem \ref{Ihara-1}), which may be of independent interest.

Let us now describe the main results of this article. Let $E_{/\Q}$ be an elliptic curve without complex multiplication of conductor $N=N_E$ prime to $\delta_K$ and denote by $f_0(q)=\sum_{n=1}^\infty a_nq^n$ the normalized newform of weight $2$ on $\Gamma_0(N)$ associated with $E$ by the Shimura--Taniyama correspondence. Let $L_K(E,s)=L_K(f_0,s)$ be the complex $L$-function of $E$ over $K$ and assume that
\begin{itemize}
\item the sign of the functional equation of $L_K(E,s)$ is $+1$.
\end{itemize}
This implies that $L_K(E,s)$ vanishes to even order (and is expected to be frequently non-zero) at the critical point $s=1$. This is equivalent to saying that the set of primes
\[ \Sigma:=\bigl\{q|N:\;\text{$\ord_q(N)$ is odd and $q$ is inert in $K$}\bigr\} \]
has \emph{even} cardinality (and is possibly empty). We shall further assume that $\ord_q(N)=1$ for all $q\in\Sigma$. Let $D$ be the product of the primes in $\Sigma$ (with $D:=1$ if $\Sigma =\emptyset$), then set $M:=N/D$.

Write $\widehat{G}_c$ for the group of complex-valued characters of $G_c$, fix $\chi\in\widehat{G}_c$ and let $L_K(E,\chi,s)$ be the twist of $L_K(E,s)$ by $\chi$. For the remainder of this article choose $c$ prime to $\delta_K N$. By \cite[Theorem 3.15]{Dar}, it follows from our running assumptions that the sign of the functional equation for $L_K(E,\chi,s)$ is $+1$ as well.

Write $\Z[\chi]$ for the cyclotomic subring of $\C$ generated over $\Z$ by the values of $\chi$. In Section \ref{Popa-section} we introduce the algebraic part $\cl L_K(E,\chi,1)\in\Z[\chi]_S$ of the special value $L_K(E,\chi,1)$, where $S$ is a certain auxiliary finite set of prime numbers. Such an algebraic part is defined in terms of a twisted sum of homology cycles associated with conjugacy classes of oriented optimal embeddings of $\cO_c$ into a fixed Eichler order of $B$ of level $M$. Thanks to previous work of Popa (\cite{Po}), it can be shown that $L_K(E,\chi,1)\not=0$ if and only if $\cl L_K(E,\chi,1)\not=0$ (cf. Theorem \ref{popa-thm}).

From now on assume that $L_K(E,\chi,1)\not=0$. Suppose that $p$ is a prime number fulfilling the conditions listed in Assumption \ref{ass}, which exclude only finitely many primes. In particular, $p$ is a prime of good reduction for $E$ such that $\cl L_K(E,\chi,1)$ is not zero modulo $p$. Corresponding to any such $p$, in \S \ref{admissible-subsec} we introduce the notion of $p$-admissible primes (usually simply called ``admissible''), which are certain primes not dividing $Np$ and inert in $K$. For a sign $\epsilon\in\{\pm\}$ and a suitable $p$-admissible prime $\ell$ we introduce a map
\[ \partial_\ell:J_\epsilon^{(\ell)}(K_\ell)\otimes\Z[\chi]_S\longrightarrow\Z[\chi]/p\Z[\chi]_S \]
(denoted by $\partial'_\ell\otimes\text{id}$ in \S \ref{reciprocity-subsec}) and a twisted sum of Darmon points $P_\chi^\epsilon\in J_\epsilon^{(\ell)}(K_\ell)\otimes\Z[\chi]_S$. Here $J_\epsilon^{(\ell)}$ is an abelian variety over $\Q$ whose existence is a conjectural refinement of our work in \cite{LRV} and which is predicted to be isogenous to $J_0^D(M\ell )^{\text{$\ell$-new}}$ (see \S \ref{rig-subs} and \S \ref{SH-subsection} for details). If $D=1$ (i.e., $B\simeq \M_2(\Q)$) then our Darmon points need to be replaced by the points on modular Jacobians defined by Dasgupta in \cite[\S 3.3]{Das} (see also \cite[\S 1.2]{BDD}).

Letting $[\star]$ denote the class of the element $\star$ in a quotient group and writing $t_\ell$ for the exponent of $\Gamma_\ell^{\rm ab}$, our Gross--Zagier type formula for the special value of $L_K(E,\chi,s)$ can then be stated as follows.

\begin{theorem} \label{rec-intro-thm}
The equality $\partial_\ell(P_\chi^\epsilon)=t_\ell\cdot[\cl L_K(E,\chi,1)]$ holds in $\Z[\chi]_S/p\Z[\chi]_S$.
\end{theorem}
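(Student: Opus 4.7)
The plan is to compute $\partial_\ell(P_\chi^\epsilon)$ explicitly by combining the rigid analytic uniformization of $J_\epsilon^{(\ell)}$ from \cite{LRV} (recalled in \S \ref{sec-das}) with the twisted optimal-embedding description of $\cl L_K(E,\chi,1)$ from Section \ref{Popa-section}, and then matching the two combinatorial expressions edge by edge on the Bruhat--Tits tree $\mathcal{T}$ of $\PGL_2(\Q_\ell)$.

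First I would unwind the Darmon point. Each individual Darmon point entering the twisted sum $P_\chi^\epsilon$ is, by construction, the class of a multiplicative integral
\[
\mint_\tau^{\gamma_\psi\tau}\omega_{f_0}
\]
where $\omega_{f_0}$ is the $\Gamma_\ell$-equivariant rigid analytic cocycle on $\PP^1(\Q_\ell)\times\PP^1(\Q_\ell)$ attached to the newform $f_0$, $\psi$ runs through $\Gamma_\ell$-conjugacy classes of oriented optimal embeddings $\cO_c\hookrightarrow R\otimes\Z[1/\ell]$, $\gamma_\psi\in\Gamma_\ell$ is the image under $\psi$ of a generator of $\cO_c^{\times,+}/\{\pm1\}$ and $\tau$ is the unique fixed point of $\psi$ in the $\ell$-adic upper half plane. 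The twisting by $\chi$ follows class field theory via the canonical identification $G_c\simeq\Pic^+(\cO_c)$.

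Second I would compute $\partial_\ell$ on such integrals. The map $\partial_\ell$ is, up to reduction modulo $p$, the composition of the multiplicative integral with the ``valuation'' projection of the rigid analytic torus uniformizing $J_\epsilon^{(\ell)}$; concretely it is the character group component at $\ell$. Under this projection the multiplicative integral degenerates to the sum
\[
\sum_{e}c_{f_0}(e)\pmod p,
\]
where $c_{f_0}$ is the $\Gamma_\ell$-invariant $\Z$-valued harmonic cocycle attached to $f_0$ and $e$ ranges over the oriented edges of the geodesic in $\mathcal{T}$ joining the reductions of $\tau$ and $\gamma_\psi\tau$. This is the quaternionic analogue of the projection formula from $p$-adic multiplicative integrals to edge-sums used in \cite{BDD} and \cite{Das}. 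The factor $t_\ell$ enters precisely here: in order for $\partial_\ell$ to be well defined on the Darmon point one must lift $c_{f_0}$ to a genuine $\Gamma_\ell$-equivariant cochain, and the obstruction lives in $\Gamma_\ell^{\rm ab}$; multiplication by $t_\ell$, the exponent bounded by Theorem \ref{Ihara-1}, clears this obstruction, and Assumption \ref{ass} on $p$ guarantees that $t_\ell$ is not absorbed modulo $p$.

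Third, I would rewrite the resulting double sum, twisted by $\chi$, via the bijection between $\Gamma_\ell$-conjugacy classes of oriented optimal embeddings $\cO_c\hookrightarrow R\otimes\Z[1/\ell]$ and pairs consisting of a $\Gamma_\ell$-orbit of vertices of $\mathcal{T}$ fixed by the embedded torus together with a compatible orientation; this relates the tree combinatorics to optimal embeddings of $\cO_c$ into the Eichler order $R$ of level $M$. Tracking weights carefully, the $\chi$-twisted edge-sum becomes exactly the homology cycle used to define $\cl L_K(E,\chi,1)$ in Section \ref{Popa-section}, giving the desired equality in $\Z[\chi]_S/p\Z[\chi]_S$.

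The main obstacle will be the second and third steps: establishing the projection-formula identification of $\partial_\ell\circ\mint$ with the harmonic-cocycle edge sum in the quaternionic (division-algebra) setting, and then matching this edge sum with Popa's toric period that defines $\cl L_K(E,\chi,1)$. This is a quaternionic avatar of Waldspurger's formula and requires careful bookkeeping of the Atkin--Lehner involution at $\ell$ producing the sign $\epsilon$, of orientations of embeddings, and of the Galois action on conjugacy classes via the reciprocity map $G_c\simeq\Pic^+(\cO_c)$. A secondary difficulty is ensuring that $t_\ell$ appears with the exact normalization claimed in the statement and not merely up to an element of $\Gamma_\ell^{\rm ab}$ whose order could a priori contain $p$; this is where Theorem \ref{Ihara-1} and the exclusion of finitely many primes in Assumption \ref{ass} are both essential.
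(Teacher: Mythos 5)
Your high-level picture (multiplicative integrals degenerating to alternating edge-sums on the Bruhat--Tits tree, then matched with the optimal-embedding cycle defining $\cl L_K(E,\chi,1)$) points in the right direction, but the proposal has two substantive gaps that are precisely where the paper's proof does its real work.

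First, $P_\psi^\epsilon$ is \emph{not} a single multiplicative integral $\mint_\tau^{\gamma_\psi\tau}$ of a form; by construction it is $t\cdot\beta_\epsilon(\gamma_\psi)$, where $\beta_\epsilon$ is a $1$-cochain splitting the $2$-cocycle $d_\epsilon$ built from multiplicative integrals and well defined only modulo $\Hom(\Gamma,J_\epsilon^{(\ell)})$. The mechanism by which $t=t_\ell$ appears is therefore not ``clearing an obstruction to lifting the harmonic cocycle to a $\Gamma_\ell$-equivariant cochain'': the factor $t$ is already baked into the definition of the point, and in the proof (Lemma \ref{lemma-partial}) what makes the computation go through is that $\partial'_\ell\circ\beta_\epsilon$ and an explicitly constructed function $\alpha_\tau$ \emph{both split the same $2$-cocycle} $\partial'_\ell\circ d_\epsilon$, so they differ by a homomorphism on $\Gamma$, and that ambiguity is annihilated by multiplying by $t$. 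Your framing in terms of a missing $\Gamma_\ell$-equivariant lift would not produce the clean equality claimed.

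Second, your ``matching'' step quietly conflates levels. The edge-sum data coming from $\partial_\ell$ lives on the tree at level $M\ell$ (via the $H$-valued cocycle $\mu^{\mathcal Y}$ from \cite{LRV}, with values in a quotient of $H_1(\Gamma_0^D(M\ell),\Z)$), whereas $\cl L_K(E,\chi,1)$ is defined via cycles $[\gamma_\sigma]$ in $H_1(X_0^D(M),\Z[\chi]_S)$ at level $M$. The bridge between the two is precisely the mod-$p$ level-raising result Theorem \ref{cong-thm}: it uses the admissibility condition $p\mid(\ell+1)^2-a_\ell^2$ to produce the identifications $\mathrm{pr}_1,\mathrm{pr}_2,\mathrm{pr}_3$ and the isomorphism $\Phi_{\ell,\epsilon}/\m_{f_\ell}\simeq\Z/p\Z$, and it is through these (together with the key telescoping identity valid only for the sign $\delta=-1$, which your proposal never fixes) that the alternating sum $\sum_i(-1)^i\tilde\mu^{\mathcal Y}_{\gamma_2}(e_i)$ collapses to $\mathrm{pr}_2([\gamma_\sigma])$ and hence to $[\cl L_K(E,\chi,1)]$. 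Without level raising and the $\delta=-1$ bookkeeping, the ``tracking weights carefully'' step has no actual mechanism behind it.
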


This result extends the main theorem of \cite{BDD}, where a similar formula was proved for $D=1$ and $\chi$ trivial. The extension of \cite[Theorem 3.9]{BDD} to the case of $D=1$ and arbitrary characters is relatively straightforward, the only ingredient that needs to be added being a version of Popa's classical formula in the twisted setting. However, note that the methods of \cite{BDD} are heavily based on modular symbol constructions, while our proof for arbitrary $D>1$ relies on the techniques introduced in \cite{LRV}. A proof of Theorem \ref{rec-intro-thm}, which can also be viewed as a ``reciprocity law'' in the sense of \cite{BD1}, is given in Theorem \ref{rec-law}. As in \cite{BDD}, a key ingredient is a level raising result (Theorem \ref{cong-thm}) at the admissible prime $\ell$; more precisely, since $\ell$ is inert in $K$, the construction of Darmon points is available ``at level $M\ell$'', and the proof of the above theorem boils down to suitably relating Darmon points on $J_\epsilon^{(\ell)}$ to the class modulo $p$ of the algebraic part $\cl L_K(E,\chi,1)$. We devote Sections \ref{raising-sec} and \ref{formula-sec} to a careful analysis of these issues.

What makes the formula of Theorem \ref{rec-intro-thm} interesting, and especially useful for the arithmetic applications we are going to describe, is the fact that $p$ does not divide the integer $t_\ell$. The possibility of requiring such a non-divisibility for a $p$-admissible prime $\ell$ is non-trivial and rests on the results on the support of $\Gamma_\ell^{\rm ab}$ that, as already mentioned, we obtain in Section \ref{ihara-sec}.

We conclude this introduction by stating the main arithmetic consequences of Theorem \ref{rec-intro-thm}. Let $K'$ be an extension of $K$ contained in $H_c$ for some $c\geq1$ as before and let $L_{K'}(E,s)$ be the $L$-function of $E$ over $K'$ (recall that a necessary and sufficient condition for an abelian extension $K'$ of $K$ to be contained in a ring class field is that it be Galois over $\Q$ with the non-trivial element of $\Gal(K/\Q)$ acting on $\Gal(K'/K)$ by inversion). For any prime number $p$ let $\Sel_p(E/K')$ be the $p$-Selmer group of $E$ over $K'$. While Theorem \ref{rec-intro-thm} is of a genuinely local nature (that is, to obtain it we do not need to use any conjectural global property of Darmon points), to prove the following vanishing result (Theorem \ref{cons2-thm}) we have to assume the validity of Conjecture \ref{SH-conjecture}, which predicts that the Darmon points are rational over suitable (narrow) ring class fields of $K$.

\begin{theorem} \label{thm3-intro}
Assume Conjecture \ref{SH-conjecture}. If $L_{K'}(E,1)\not=0$ then
\[ \Sel_p(E/K')=0 \]
for all but finitely many primes $p$. In particular, $E(K')$ is finite.
\end{theorem}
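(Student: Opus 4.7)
My approach is a Kolyvagin-style descent using Darmon points, mirroring \cite{BDD} and the classical Heegner point theory. Theorem \ref{rec-intro-thm} supplies the non-triviality at admissible primes of an ``Euler system'', and Conjecture \ref{SH-conjecture} promotes the Darmon points to genuine global objects on $E$ that pair against Selmer classes via global duality.

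\textbf{Reduction to isotypic components.} Since $E(K')$ is finitely generated by Mordell--Weil, it is finite as soon as $\Sel_p(E/K')=0$ for a single prime $p$ not dividing $|E(K')_{\text{tors}}|$, so it suffices to prove the Selmer vanishing for almost all $p$. Fix $c\geq 1$ with $K'\subseteq H_c$ and $c$ prime to $\delta_K N$, and restrict to primes $p$ satisfying Assumption \ref{ass} and not dividing $[H_c:K]$; this excludes only finitely many. For such $p$, $\Sel_p(E/K')\otimes\Z_p[\chi]$ decomposes as the direct sum of $\chi$-isotypic components $\Sel_p(E/K')^\chi$ for $\chi\in\widehat{G}_c$ trivial on $\Gal(H_c/K')$. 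The Artin factorization $L_{K'}(E,s)=\prod_\chi L_K(E,\chi,s)$ and the hypothesis $L_{K'}(E,1)\neq 0$ imply $L_K(E,\chi,1)\neq 0$ for all such $\chi$; by Theorem \ref{popa-thm} we then have $\cl L_K(E,\chi,1)\neq 0$, and after enlarging $S$ if necessary this algebraic part is a unit modulo $p$.

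\textbf{Cohomology class and global duality.} Fix $\chi$ and suppose for contradiction there is a nonzero $s\in\Sel_p(E/K')^\chi$. Conjecture \ref{SH-conjecture}, together with the conjectural isogeny $J_\epsilon^{(\ell)}\sim J_0^D(M\ell)^{\ell\text{-new}}$ and the projection to $E$, upgrades $P_\chi^\epsilon$ to a global point $Q_\chi\in E(H_c)\otimes\Z[\chi]_S$; its Kummer image yields a class $\kappa_\chi\in H^1(K,E[p])\otimes\Z[\chi]_S$ in the $\chi$-eigenspace. By construction $\kappa_\chi$ lies in the Selmer local condition at every $v\neq\ell$, and at the $p$-admissible $\ell$ Theorem \ref{rec-intro-thm} computes the singular residue $\partial_\ell(\kappa_\chi)=t_\ell\cdot[\cl L_K(E,\chi,1)]$ in $\Z[\chi]_S/p\Z[\chi]_S$. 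Theorem \ref{Ihara-1} then allows the choice $p\nmid t_\ell$, which together with the unit property of $\cl L_K(E,\chi,1) \bmod p$ gives $\partial_\ell(\kappa_\chi)\neq 0$. The global reciprocity relation $\sum_v\langle\kappa_\chi,s\rangle_v=0$ has every term with $v\neq\ell$ vanishing by the isotropy of the Selmer local condition, so the term at $\ell$ alone vanishes; by the perfect local Tate duality between $H^1_{\text{unr}}(K_\ell,E[p])$ and $H^1_{\text{sing}}(K_\ell,E[p])$ this forces $\text{loc}_\ell(s)=0$ in $H^1_{\text{unr}}(K_\ell,E[p])^\chi$.

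\textbf{Chebotarev and main obstacle.} A standard Chebotarev density argument, applied to the mod-$p$ representation $\rho_p\colon G_\Q\to\GL(E[p])$ and the finite extension cut out by the cocycle $s$, shows that the set of $p$-admissible $\ell$ with $\text{loc}_\ell(s)\neq 0$ has positive density; intersecting with the positive-density condition $p\nmid t_\ell$ produces the required prime, giving the desired contradiction and hence $\Sel_p(E/K')^\chi=0$ for all qualifying $p$ and all $\chi$. The principal difficulty lies in the simultaneous enforcement of the three conditions on $\ell$ -- $p$-admissibility, $p\nmid t_\ell$, and the Chebotarev separation of $s$ -- and it is precisely for this reason that the sharp control on the support of $\Gamma_\ell^{\rm ab}$ established in Section \ref{ihara-sec} (via the Diamond--Taylor Eisenstein bound, which substitutes for the absent quaternionic Ihara lemma) is indispensable for the arithmetic applications.
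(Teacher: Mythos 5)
Your overall strategy --- Artin factorization, Maschke decomposition, Kolyvagin-style descent at a well-chosen admissible prime, global Tate duality, Chebotarev --- is the one the paper uses (via Theorems \ref{main-thm}, \ref{cons2-thm} and Lemma \ref{eigen-sel-lem}). There is, however, a genuine gap in the middle step, and a smaller imprecision at the end.

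The gap is in the construction of the cohomology class. You write that Conjecture \ref{SH-conjecture}, the conjectural isogeny $J_\epsilon^{(\ell)}\sim J_0^D(M\ell)^{\text{$\ell$-new}}$, \emph{and the projection to $E$}, upgrade $P_\chi^\epsilon$ to a global point $Q_\chi\in E(H_c)\otimes\Z[\chi]_S$. No such projection exists: $E$ has conductor $N=DM$, so $E$ is $\ell$-\emph{old} at level $M\ell$ and is in particular not a quotient of $J_0^D(M\ell)^{\text{$\ell$-new}}$. The whole point of the level-raising machinery in Section \ref{raising-sec} is to circumvent exactly this obstacle: rather than transporting the point itself to $E$, one applies the Kummer map on $J_\epsilon^{(\ell)}$ and then pushes the coefficients through the mod-$p$ congruence $J_\epsilon^{(\ell)}[p]/\m_{f_\ell}\simeq E[p]$ furnished by Theorem \ref{teo-rep} (itself resting on Theorem \ref{cong-thm}). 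The resulting class $\kappa(\ell)=\bar\kappa(P_c)\in H^1(H_c,E[p])$ is not the Kummer image of any point of $E(H_c)$; it is a Kolyvagin-type class whose singular residue at $\ell$ is controlled by Theorem \ref{rec-intro-thm} and whose finite local conditions at $v\ne\ell$ are verified separately. Without this replacement your $\kappa_\chi$ simply does not exist, and the duality argument has nothing to pair against $s$.

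Two smaller points. First, the Chebotarev/non-vanishing-of-localization step (the paper's Proposition \ref{existence-admissible-primes}) applies to a nonzero element $s$ of $H^1(H_c,E[p])$, not to an element of a tensored eigenspace $\Sel_p(E/K')^\chi\subset\Sel_p(E/K')\otimes\kappa_p$; the paper therefore works with the $I_{\chi_p}$-torsion submodule $\Sel_p(E/H_c)[I_{\chi_p}]$, proves it vanishes, and only then passes to $\Sel_p(E/H_c)\otimes_\chi\W$ and finally to $\Sel_p(E/K')^\lambda$ by inflation--restriction (Lemma \ref{eigen-sel-lem}). Second, ``the algebraic part is a unit modulo $p$'' is stronger than needed and not what Assumption \ref{ass} gives; one needs only that it be nonzero modulo some prime $\fr p$ of $\Z[\chi]$ above $p$, which is the role of \eqref{alg-nonzero-eq} and the choice of the completion $\W$.
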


Theorem \ref{thm3-intro} is a consequence of a vanishing result for $p$-Selmer groups of $E$ twisted by anticyclotomic characters (Theorem \ref{main-thm}), and the set of primes for which it is valid contains those satisfying Assumption \ref{ass}. Observe that this result, which is predicted by the conjecture of Birch and Swinnerton-Dyer, is (a strengthening of) the counterpart in the real quadratic setting of the main result of \cite{LV}, which was obtained (unconditionally) in the more classical context of imaginary quadratic fields and Heegner points. When $D=1$ the above theorem represents an explicit instance of the ``potential arithmetic applications'' of Theorem \ref{rec-intro-thm} which are alluded to by Bertolini, Darmon and Dasgupta in the introduction to \cite{BDD}. We refer the reader to \S \ref{applications-subsec} for other arithmetic consequences of Theorem \ref{rec-intro-thm} (e.g., twisted versions of the Birch and Swinnerton-Dyer conjecture for $E$ over $K'$ in analytic rank $0$).

\vskip 2mm

\noindent\emph{Notation and conventions.} Throughout our work we fix an algebraic closure $\bar\Q$ of $\Q$ and view all number fields as subfields of $\bar\Q$. If $F$ is a number field we write $\cl O_F$ and $G_F$ for the ring of integers and the absolute Galois group $\Gal(\bar\Q/F)$ of $F$, respectively, and denote by $F_v$ the completion of $F$ at a place $v$.

For all prime numbers $\ell$ we fix an algebraic closure $\bar\Q_\ell$ of $\Q_\ell$ and an embedding $\bar\Q\hookrightarrow\bar\Q_\ell$. Moreover, $\C_\ell$ denotes the completion of $\bar\Q_\ell$.

If $\ell$ is a prime then $\F_\ell$ is the finite field with $\ell$ elements. We sometimes write $\F_p$ in place of $\Z/p\Z$ when we want to emphasize the field structure of $\Z/p\Z$.

If $G$ is a profinite group and $M$ is a continuous $G$-module we let $H^1(G,M)$ be the first group of continuous cohomology of $G$ with coefficients in $M$. In particular, if $G$ is the absolute Galois group of a (local or global) field $F$ then we denote $H^1(G,M)$ also by $H^1(F,M)$.

Let $F$ be a number field, $p$ a prime number and $A_{/F}$ an abelian variety.  We write $A[p^n]$ for the $p^n$-torsion subgroup of $A(\bar\Q)$. As customary, we let $\Sel_{p^n}(A/F)$ be the $p^n$-Selmer group of $A$ over $F$, i.e. the subgroup of $H^1(F,A[p^n])$ consisting of those classes which locally at every place of $F$ belong to the image of the local Kummer map. If $A$ has good reduction at a prime ideal $\mathfrak q\subset\cO_F$ such that $\mathfrak q\nmid p$ we let $H^1_{\rm sing}(F_\mathfrak q,A[p])$ and $H^1_{\rm fin}(F_\mathfrak q,A[p])$ denote the singular and finite parts of $H^1(F_\fr q,A[p])$ as defined in \cite[\S 3]{LV}.

Finally, for any ring $R$ and any pair of maps $f:M\rightarrow N$, $g:P\rightarrow Q$ of $R$-modules we write $f\otimes g:M\otimes_RP\rightarrow N\otimes_RQ$ for the $R$-linear map obtained by extending additively the rule $m\otimes p\mapsto f(m)\otimes g(p)$.

\vskip 2mm

\noindent\emph{Acknowledgements.} It is a pleasure to thank Kevin Buzzard, Henri Darmon, Benedict Gross, Yasutaka Ihara, Alexandru A. Popa and Alexei Skorobogatov for enlightening discussions and correspondence which helped improve some of the results of this article. We also would like to thank the anonymous referee for several helpful remarks and suggestions. Last but not least, heartfelt gratitude goes to Frank Sullivan for his invaluable help which allowed the first named author to spend March 2010 in Barcelona, at a delicate stage of this project. 

The three authors thank the Centre de Recerca Matem{\`a}tica (Bellaterra, Spain) for its warm hospitality in Winter 2010, when part of this research was carried out.

\section{On Ihara's group} \label{ihara-sec}

\subsection{Basic definitions} \label{ihara-subsec}

As in the introduction, let $D\geq1$ be a square-free product of an \emph{even} number of primes and let $M\geq1$ be an integer coprime with $D$. Let $B$ be the (unique, up to isomorphism) indefinite quaternion algebra over $\Q$ of discriminant $D$. Let $R=R(M)$ be a fixed Eichler order of level $M$ in $B$ and write $\Gamma_0^D(M)$ for the group of norm $1$ elements in $R$. If $\ell\nmid DM$ is a prime number then let $R'=R(M\ell)\subset R$ be an Eichler order of level $M\ell$ contained in $R$ and let $\Gamma_0^D(M\ell)$ be the group of norm $1$ elements in $R'$.

Fix an isomorphism of $\Q_\ell$-algebras
\[ \iota_\ell:B\otimes_\Q\Q_\ell\overset\simeq\longrightarrow\M_2(\Q_\ell) \]
such that $\iota_\ell(R\otimes\Z_\ell)$ is equal to $\M_2(\Z_\ell)$ and $\iota_\ell(R'\otimes\Z_\ell)$ is equal to the subgroup of $\M_2(\Z_\ell)$ consisting of upper triangular matrices modulo $\ell$. Letting the subscript ``$1$'' denote elements of norm $1$, we define the \emph{Ihara group} at $\ell$ to be the subgroup of $\SL_2(\Q_\ell)$ given by
\[ \Gamma_\ell:=\bigl(R\otimes\Z[1/\ell]\bigr)^\times_1\;\overset{\iota_\ell}{\longmono}\;\SL_2(\Q_\ell). \]
It acts on Drinfeld's $\ell$-adic half-plane $\mathcal H_\ell:=\C_\ell-\Q_\ell$ with dense orbits. The study of $\Gamma_\ell$ (or, rather, of its abelianization) when $\ell$ varies over the set of primes not dividing $MD$ will be the goal of the next two subsections.

\subsection{Finiteness of $\Gamma_\ell^{\rm ab}$} \label{finite-subsec}

We begin our discussion with a direct proof of the finiteness of the abelianization $\Gamma^{\rm ab}_\ell$ of $\Gamma_\ell$ for all $\ell\nmid MD$, which is a well-known fact (cf., e.g., \cite{Ih}). The reader is referred to \cite[Ch. VIII and IX]{Mar} (in particular, to \cite[Proposition 5.3, p. 324]{Mar}) for general results of this type.

Before proving the proposition we are interested in, let us introduce some notation. Let
\[ \pi_1,\pi_2: X^D_0(M\ell)\longrightarrow X^D_0(M),\qquad\Gamma_0^D(M\ell)z\overset{\pi_1}{\longmapsto}\Gamma_0^D(M)z,\qquad\Gamma_0^D(M\ell)z\overset{\pi_2}{\longmapsto}\Gamma_0^D(M)\om_\ell z \]
be the two natural degeneracy maps. Here $\om_\ell$ is an element in $R(M\ell)$ of reduced norm $\ell$ that normalizes $\Gamma_0^D(M\ell)$. As a piece of notation, for any element $\g$ in (respectively, subgroup $G$ of) $\Gamma_0^D(M\ell)$ we shall write $\hat\g:=\om_\ell\g\om^{-1}_\ell$ (respectively, $\hat{G}:=\om_\ell G\om_\ell^{-1}$). Moreover, let
\[ \pi^\ast:=\pi_1^\ast\oplus\pi_2^\ast:H_1\bigl(X_0^D(M),\Z\bigr)^2\longrightarrow H_1\bigl(X_0^D(M\ell),\Z\bigr) \]
and
\[ \pi_\ast:=(\pi_{1,\ast},\pi_{2,\ast}):H_1\bigl(X_0^D(M\ell),\Z\bigr)\longrightarrow H_1\bigl(X_0^D(M),\Z\bigr)^2 \]
be the maps induced in homology by pull-back and push-forward, respectively. In terms of group homology, they correspond to the maps
\[ \pi^\ast:=\pi_1^\ast\oplus\pi_2^\ast:H_1\bigl(\Gamma_0^D(M),\Z\bigr)^2\longrightarrow H_1\bigl(\Gamma_0^D(M\ell),\Z\bigr) \]
and
\[ \pi_\ast:=(\pi_{1,\ast},\pi_{2,\ast}):H_1\bigl(\Gamma_0^D(M\ell),\Z\bigr)\longrightarrow H_1\bigl(\Gamma_0^D(M),\Z\bigr)^2 \]
induced by corestriction and restriction, respectively.

\begin{proposition} \label{finite-ab-prop}
The group $\Gamma^{\rm ab}_\ell$ is finite for all primes $\ell\nmid MD$.
\end{proposition}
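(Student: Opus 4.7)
The plan is to exploit the action of $\Gamma_\ell\subset \SL_2(\Q_\ell)$ on the Bruhat--Tits tree $\mathcal T_\ell$ of $\SL_2(\Q_\ell)$. Strong approximation for the algebraic group of reduced-norm-one elements of $B$ at the single place $\ell$ ensures that $\Gamma_\ell$ acts transitively on each of the two $\SL_2(\Q_\ell)$-orbits of vertices of $\mathcal T_\ell$, with vertex stabilizers conjugate to $\Gamma_0^D(M)$ and $\hat\Gamma_0^D(M)=\om_\ell\Gamma_0^D(M)\om_\ell^{-1}$ respectively, and edge stabilizers conjugate to $\Gamma_0^D(M\ell)$. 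Bass--Serre theory (the quaternionic analogue of Ihara's classical theorem) then yields an amalgamated free product decomposition
\[
\Gamma_\ell\;\simeq\;\Gamma_0^D(M)\ast_{\Gamma_0^D(M\ell)}\hat\Gamma_0^D(M),
\]
in which $\Gamma_0^D(M\ell)$ sits inside the right-hand factor via the conjugation map $\g\mapsto\hat\g$.

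From this presentation I would read off the Mayer--Vietoris-type exact sequence in low-degree group homology
\[
H_1\bigl(\Gamma_0^D(M\ell),\Z\bigr)\xrightarrow{(\pi_{1,\ast},\,-\pi_{2,\ast})} H_1\bigl(\Gamma_0^D(M),\Z\bigr)^{2}\longrightarrow \Gamma_\ell^{\rm ab}\longrightarrow 0,
\]
whose left-most map has the two components induced respectively by the inclusion $\Gamma_0^D(M\ell)\hookrightarrow\Gamma_0^D(M)$ and by $\g\mapsto\hat\g$, matching the definitions of $\pi_{1,\ast}$ and $\pi_{2,\ast}$ given before the statement. Since $\Gamma_0^D(M)$ and $\Gamma_0^D(M\ell)$ are Fuchsian groups of the first kind they are finitely presented, so $\Gamma_\ell^{\rm ab}$ is automatically finitely generated and finiteness reduces to surjectivity of $(\pi_{1,\ast},-\pi_{2,\ast})$ after tensoring with $\Q$.

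To prove this rational surjectivity I would restrict to the ``old'' subspace $\pi^\ast\bigl(H_1(\Gamma_0^D(M),\Q)^{2}\bigr)\subseteq H_1(\Gamma_0^D(M\ell),\Q)$ and carry out the standard double-coset computation of the self-map $\pi_\ast\circ\pi^\ast$ on $V^{2}$, where $V:=H_1(\Gamma_0^D(M),\Q)$. This self-map is represented by the matrix
\[
\begin{pmatrix}\ell+1 & T_\ell\\ T_\ell & \ell+1\end{pmatrix}
\]
of Hecke-type operators, whose determinant $(\ell+1)^{2}-T_\ell^{\,2}$ acts invertibly on $V$: via Eichler--Shimura (or Jacquet--Langlands in the case $D>1$) $V$ is spanned by systems of Hecke eigenvalues of weight-two newforms on $\Gamma_0^D(M)$, and the Ramanujan--Petersson bound $|a_\ell(f)|\le 2\sqrt{\ell}<\ell+1$ forces the determinant to be non-zero on each eigenspace. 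Therefore $\pi_\ast$ is already surjective on the old part, and \emph{a fortiori} $(\pi_{1,\ast},-\pi_{2,\ast})$ is surjective on all of $H_1(\Gamma_0^D(M\ell),\Q)$, which concludes the argument.

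The main obstacle is the first step: one must invoke the precise form of strong approximation for the indefinite algebra $B$ to get simultaneous vertex-transitivity of $\Gamma_\ell$ on both $\SL_2(\Q_\ell)$-orbits, and then identify the abstract Bass--Serre edge-inclusion maps with the arithmetic degeneracy maps $\pi_{1,\ast},\pi_{2,\ast}$, the second of which is twisted by conjugation against $\om_\ell$. Once this dictionary is firmly in place, the Hecke computation closing the argument is routine.
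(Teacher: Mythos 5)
Your proof is correct and follows essentially the same route as the paper. The only difference is that where the paper cites results from \cite{LRV} — the Mayer--Vietoris exact sequence in group homology coming from the amalgam decomposition of $\Gamma_\ell$, and the fact (from \cite[Lemma 6.2]{LRV}) that $\pi_*\circ\pi^*=\smallmat{\ell+1}{T_\ell}{T_\ell}{\ell+1}$ is rationally invertible via the Ramanujan--Petersson bound — you unpack and rederive those ingredients directly, including the strong approximation input needed for Bass--Serre.
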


\begin{proof}
As shown in \cite[equation (30)]{LRV}, there is a long exact sequence in homology
\begin{equation} \label{ex-seq-iso}
\begin{split}
H_1\bigl(\Gamma_0^D(M\ell),\Z\bigr)\overset{\pi_*}{\longrightarrow}H_1\bigl(\Gamma_0^D(M),\Z\bigr)^2&\longrightarrow H_1(\G_\ell,\Z)\\
&\longrightarrow H_0\bigl(\Gamma_0^D(M\ell),\Z\bigr)\longrightarrow H_0\bigl(\Gamma_0^D(M),\Z\bigr)^2.
\end{split}
\end{equation}
Since the actions on $\Z$ are trivial, the last homomorphism can be naturally identified with the diagonal embedding of $\Z$ into $\Z^2$, which is obviously injective. Thus the exactness of \eqref{ex-seq-iso} implies that $\mathrm{coker}(\pi_*)\simeq H_1(\G_\ell,\Z)$, which in turn is isomorphic to $\Gamma^{\rm ab}_\ell$. But in the proof of \cite[Lemma 6.2]{LRV} it is shown that the endomorphism $\pi_*\circ\pi^*$ is injective with finite cokernel. Since $\mathrm{coker}(\pi_*)$ is a quotient of $\mathrm{coker}(\pi_*\circ\pi^*)$, it follows that $\Gamma^{\rm ab}_\ell$ is finite. \end{proof}

\subsection{Results on the support of $\Gamma^{\rm ab}_\ell$} \label{sec-size}

In this subsection we study the support (i.e., the set of primes dividing the order) of $\Gamma^{\rm ab}_\ell$, which is finite by Proposition \ref{finite-ab-prop}, as $\ell$ varies in the set of primes not dividing $MD$. Thanks to Ihara's Lemma, in the case of modular curves (i.e., when $D=1$) the size of $\Gamma^{\rm ab}_\ell$ is controlled in \cite[Theorem 4.3]{Ri}, and an explicit result on the support of $\Gamma_\ell^{\rm ab}$ has been given by Dasgupta in \cite{Das}. Namely, in \cite[Proposition 3.7]{Das} it is shown that the primes in this set are divisors of $6\phi(M)(\ell^2-1)$ where $\phi$ is the classical Euler function.

Assume $D>1$. The extra difficulties in the non-split quaternionic setting arise from the fact that the counterpart of \cite{Ri} is not yet available. Results of this type would follow, for instance, if $\Gamma_\ell$ had the so-called ``congruence subgroup property''. In this case, it might be possible to show that the support of $\Gamma_\ell^{\rm ab}$ is contained in the set of primes dividing $\phi(M)$, thus showing that it is in fact independent of $\ell$. See \cite{CT} for an account of this problem.

We will obtain results on the support of $\Gamma^{\rm ab}_\ell$ by means of a theorem of Diamond and Taylor (\cite[Theorem 2]{DT}) which represents a weak analogue of Ihara's Lemma for Shimura curves.

To begin our study, observe that the two coverings $\pi_1$ and $\pi_2$ of \S \ref{finite-subsec} give rise by Picard functoriality to a homomorphism of abelian varieties
\[ \xi:J_0^D(M)\oplus J_0^D(M)\longrightarrow J_0^D(M\ell) \]
between Jacobians. The kernel of $\xi$ is isomorphic to $\Hom(\Gamma^{\rm ab}_\ell,{\bf U})$ where ${\bf U}$ is the group of complex numbers of norm $1$. Thus we see that if a prime number $p$ is in the support of $\Gamma^{\rm ab}_\ell$ then the map 
\[ \xi_p:J_0^D(M)[p]\oplus J_0^D(M)[p]\longrightarrow J_0^D(M\ell)[p] \]
induced by $\xi$ on the $p$-torsion subgroup is not injective. We study the kernel of $\xi_p$ by means of \cite[Theorem 2]{DT}.

To start with, let us fix some notation. For any prime $q\nmid D$ choose an isomorphism $\varphi_q:B\otimes_\Q\Q_q\simeq\M_2(\Q_q)$ of $\Q_q$-algebras in such a way that for all $q|M$ one has
\[ \varphi_q(R\otimes\Z_q)=\left\{\begin{pmatrix}a&b\\c&d\end{pmatrix}\in\M_2(\Z_q)\;\Big|\;c\equiv0\pmod{q^{n(q)}}\right\} \]
where $q^{n(q)}$ is the exact power of $q$ dividing $M$. We also require that $\varphi_\ell$ satisfies the additional condition
\[ \varphi_\ell(R^\prime\otimes\Z_\ell)=\left\{\begin{pmatrix}a&b\\c&d\end{pmatrix}\in\M_2(\Z_\ell)\;\Big|\;c\equiv0\pmod{\ell}\right\}. \]
For every $q$ as above and every integer $m\geq0$ write $\Gamma^{\rm loc}_0(q^m)$ for the subgroup of $\GL_2(\Z_q)$ consisting of matrices $\smallmat abcd$ with $c\equiv0\pmod{q^m}$. We further denote by $\Gamma_1^{\rm loc}(q^m)$ the subgroup of $\Gamma_0^{\rm loc}(q^m)$ consisting of matrices $\smallmat abcd$ with $d\equiv1\pmod{q^m}$ and $c\equiv0\pmod{q^m}$. For primes $q\nmid D$ let
\[ i_q:B\;\longmono\;\GL_2(\Q_q) \]
denote the composition of the canonical inclusion $B\hookrightarrow B\otimes\Q_q$ with isomorphism $\varphi_q$. Let $\Gamma_1^D(M)$ be the subgroup of $\Gamma_0^D(M)$ consisting of those elements $\gamma$ such that $i_q(\gamma)\in\Gamma_1^{\rm loc}(q^{n(q)})$ for all $q|M$. Moreover, let $Q\geq1$ be the smallest integer such that $MQ\geq4$ and $\ell\nmid Q$ (so $Q=1$ if $M\geq4$) and define $\Gamma_1^D(MQ)$ as the subgroup of $\Gamma_1^D(M)$ consisting of those elements $\gamma$ such that $i_q(\gamma)\in \Gamma_1^{\rm loc}(q)$. Finally, consider the subgroup $\Gamma_{1,0}^D(MQ,\ell)$ of $\Gamma_1^D(MQ)$ whose elements are the $\gamma$ such that $i_\ell(\gamma)\in \Gamma_0^{\rm loc}(\ell)$. Write $X_1^D(M)$, $X_1^D(MQ)$ and $X_{1,0}^D(MQ,\ell)$ for the compact Shimura curves associated with $\Gamma_1^D(M)$, $\Gamma_{1}^D(MQ)$ and $\Gamma_{1,0}^D(MQ,\ell)$, respectively, and let $J_1^D(M)$, $J_1^D(MQ)$ and $J_{1,0}^D(MQ,\ell)$ denote their Jacobian varieties. For $i=1,2$ the inclusion  $\Gamma_{1,0}^D(MQ,\ell)\subset\Gamma_1^D(MQ)$ induces coverings
\[ \vartheta_i:X^D_{1,0}(MQ,\ell)\longrightarrow X_{1}^D(MQ) \]
defined, as above, by $\vartheta_1([z])=[z]$ and $\vartheta_2([z])=[\omega_\ell(z)]$. By Picard functoriality, we obtain a homomorphism
\[ \vartheta:J_1^D(MQ)\oplus J_1^D(MQ)\longrightarrow J_{1,0}^D(MQ,\ell) \]
between Jacobians. Further, the inclusions
\[ \Gamma_1^D(MQ)\subset\Gamma_1^D(M)\subset\Gamma_0^D(M) \]
induce converings of the relevant Riemann surfaces and thus, again by Picard functoriality, homomorphisms $\sigma:J_0^D(M)\rightarrow J_1^D(M)$ and $\eta:J_1^D(M)\rightarrow J_1^D(MQ)$. Finally, the inclusion $\Gamma_0^D(M\ell)\subset\Gamma_{1,0}^D(MQ,\ell)$ gives a homomorphism $\rho:J_0^D(M\ell)\rightarrow J_{1,0}^D(MQ,\ell)$. These maps fit in the commutative diagram
\begin{equation} \label{diagram-X}
\xymatrix@R=30pt@C=30pt{J_0^D(M)\oplus J_0^D(M)\ar[r]^-{\sigma\oplus\sigma}\ar[d]^-\xi & J_1^D(M)\oplus J_1^D(M)\ar[r]^-{\eta\oplus\eta} & J_1^D(MQ)\oplus J_1^D(MQ)\ar[d]^-\vartheta\\
J_0^D(M\ell)\ar[rr]^\rho && J_{1,0}^D(MQ,\ell).}
\end{equation}
Since $\sigma$ and $\eta$ arise by Picard functoriality from coverings of Riemann surfaces, their kernels are finite. Thus the kernels of $\sigma\oplus\sigma$ and $\eta\oplus\eta$ are finite too, and we denote by $C_1$ and $C_2$ their orders. Note that $C_1$ and $C_2$ do not depend on $\ell$ (the kernel of $\sigma$ is, by definition, the {\it Shimura subgroup} of $J_0^D(M)$ and its size is known to divide $\phi(M)$: see \cite{L}).

Observe that the kernel of $\vartheta$ is finite as well. To show this, note that the maps $\vartheta_1$ and $\vartheta_2$ induce, this time by Albanese functoriality, a homomorphism
\[ \vartheta':J_{1,0}^D(MQ,\ell)\longrightarrow J_1^D(MQ)\oplus J_1^D(MQ) \]
on Jacobians, and the composition $\vartheta'\circ\vartheta$ is represented by the matrix $\smallmat {\ell+1}{T_\ell}{T_\ell}{\ell+1}$. Since the eigenvalues of $T_\ell$ are bounded by $2\sqrt\ell$, we see that $\vartheta'\circ\vartheta$ is injective on tangent spaces, and thus its kernel is finite. So the kernel of $\vartheta$ is finite; we denote its cardinality by $C(\ell)$. In the following we study the size of $C(\ell)$. We first note that if a prime $p$ divides $C(\ell)$ then the map
\[ \vartheta_p:J_1^D(MQ)[p]\oplus J_1^D(MQ)[p]\longrightarrow J_{1,0}^D(MQ,\ell)[p]\]
induced by $\vartheta$ on the $p$-torsion subgroup is not injective.

For any discrete subgroup $G$ of $\SL_2(\R)$ denote by $S_2(G,\C)$ the $\C$-vector space of cusp forms of weight $2$ and level $G$. Let $\mathcal F=\{f_1,\dots,f_h\}$, where $h$ is the dimension of $J_1^D(MQ)$, be a basis of $S_2\bigl(\Gamma_1^D(MQ),\C\bigr)$ consisting of eigenforms for the action of the Hecke algebra and (at the cost of renumbering) assume that $\{f_1,\dots,f_m\}$ is a set of representatives for the set of orbits of $\mathcal F$ under the action of $G_\Q$. Denote by $A_1=A_{f_1},\dots,A_m=A_{f_m}$ the abelian varieties associated with these forms via the Eichler--Shimura construction, fix an isogeny
\[ J_1^D(MQ)\overset\sim\longrightarrow\prod_{i=1}^mA_i \]
and let $C_3$ be the order of its kernel, which of course does not depend on $\ell$. By the Jacquet--Langlands correspondence, each of the abelian varieties $A_i$ is isogenous over $\Q$ to the abelian variety $A_{f_{0,i}}$ associated with a classical modular form $f_{0,i}\in S_2\bigl(\Gamma_1(MDQ),\C\bigr)$ for the congruence subgroup $\Gamma_1(MDQ)\subset\SL_2(\Z)$.

For every $i=1,\dots,m$ fix an isogeny $\psi_i:A_i\rightarrow A_{f_{0,i}}$ and denote by $d_i$ the size of its kernel. Set $C_4:=\prod_{i=1}^m d_i$ and notice that $C_4$ is independent of $\ell$. Finally, recall that the mod $p$ representation of $G_\Q$ associated with a modular form $f\in S_2\bigl(\Gamma_1^D(MQ),\C\bigr)$ is irreducible for all but finitely many prime numbers $p$. For every $i$ let $e_i$ be the product of the primes $p$ such that the $G_\Q$-representation $A_{f_{0,i}}[p]$ is reducible, then set $C_5:=\prod_{i=1}^me_i$.

Now let us recall \cite[Theorem 2]{DT}, which is a (weak) substitute for Ihara's Lemma in the context of
Shimura curves attached to non-split quaternion algebras. Let $p$ be a prime number not dividing $6MDQ\ell$.
Following \cite{DT}, denote by $\T$ the image in $\End\bigl(J_1^D(MQ)\bigr)$ of the polynomial ring generated over $\Z$ by the Hecke operators $T_q$ and the spherical (i.e., diamond) operators $S_q$ for primes $q\nmid MDQ$. A maximal ideal $\mathfrak m$ of $\T$ containing $p$ is said to be \emph{Eisenstein} if for some integer $d\geq1$ and all but finitely many primes $q$ with $q\equiv1\pmod{d}$ we have $T_q-2\in\mathfrak m$ and $S_q-1\in\mathfrak m$. By \cite[Theorem 2]{DT}, the maximal ideals of $\T$ in the support of $\ker(\vartheta_p)$ are Eisenstein.

If $\mathfrak m$ is a maximal ideal of $\mathbb T$ belonging to the support of $S_2\bigl(\Gamma_1(MDQ),\C\bigr)$ with residual characteristic $p$ then $\mathfrak m$ is the kernel of the reduction modulo $p$ of the homomorphism $\T\rightarrow\mathcal O_E$ associated with one of the eigenforms $f_{0,i}\in S_2\bigl(\Gamma_1(MDQ),\C\bigr)$, where $E$ is a suitable number field. For simplicity, denote by $f$ the eigenform associated with $\mathfrak m$. By \cite[Proposition 2]{DT}, the ideal $\mathfrak m$ is Eisenstein if and only if the mod $p$ Galois representation $\rho_\mathfrak m$ attached to $\mathfrak m$ is reducible. With notation as above, this can be rephrased by saying that $\mathfrak m$ is Eisenstein if and only if the $G_\Q$-representation $A_f[p]$ is reducible.

The main result of this subsection is the following

\begin{theorem} \label{Ihara-1}
There exists an integer $C\geq1$ such that for all but finitely many primes $\ell\nmid MD$ the support of $\Gamma^{\rm ab}_\ell$ is contained in the set of primes dividing $C\ell$.
\end{theorem}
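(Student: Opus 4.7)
The strategy is to use the commutative diagram \eqref{diagram-X} to translate the bounding of $|\Gamma_\ell^{\rm ab}|=|\ker(\xi)|$ into the bounding of $C(\ell)=|\ker(\vartheta)|$, and then to apply \cite[Theorem 2]{DT} together with \cite[Proposition 2]{DT} to identify which primes can appear. The desired constant $C$ will then be assembled from the $\ell$-independent quantities $6MDQ$, $C_1$, $C_2$, $C_3$, $C_4$, $C_5$ already introduced above.

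The first step is to show that if $p$ divides $|\Gamma_\ell^{\rm ab}|$ and $p\nmid C_1C_2$, then $p$ divides $C(\ell)$. Pick a non-zero $x\in\ker(\xi)[p]$ and set $y:=(\eta\oplus\eta)\circ(\sigma\oplus\sigma)(x)\in J_1^D(MQ)^2[p]$; commutativity of \eqref{diagram-X} gives $\vartheta(y)=\rho(\xi(x))=0$. Since $C_1$ and $C_2$ bound the orders of $\ker(\sigma\oplus\sigma)$ and $\ker(\eta\oplus\eta)$, and both these kernels have order coprime to $p$ by assumption, the image $y$ is forced to be non-zero, so $\ker(\vartheta_p)\neq 0$ and $p\mid C(\ell)$.

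Assuming in addition $p\nmid 6MDQ$ and $p\neq \ell$, the support of the non-zero $\T/p\T$-module $\ker(\vartheta_p)$ contains some maximal ideal $\mathfrak{m}\subset\T$ of residue characteristic $p$. By \cite[Theorem 2]{DT} every such $\mathfrak{m}$ is Eisenstein, and by \cite[Proposition 2]{DT} this is equivalent to the reducibility of the residual mod $p$ Galois representation $\rho_\mathfrak{m}$. To transfer this reducibility to the modular abelian varieties $A_{f_{0,i}}$ I would use the isogenies $J_1^D(MQ)\sim\prod_j A_j$ (kernel of order $C_3$) and $\psi_j\colon A_j\to A_{f_{0,j}}$ (kernels of orders $d_j$ with product $C_4$): for $p$ coprime to $C_3C_4$ these isogenies restrict to $G_\Q$-equivariant isomorphisms on $p$-torsion, so $\rho_\mathfrak{m}$ is isomorphic (semisimply) to $A_{f_{0,i}}[p]$ for the index $i$ attached to $\mathfrak{m}$ via Jacquet--Langlands. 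Reducibility of this representation then forces $p\mid e_i$, hence $p\mid C_5$, by the very definition of $e_i$.

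Setting $C:=6MDQ\cdot C_1C_2C_3C_4C_5$, which is independent of $\ell$, the three steps above show that any prime $p$ in the support of $\Gamma_\ell^{\rm ab}$ either equals $\ell$ or divides $C$, so $p\mid C\ell$ in every case. The ``all but finitely many $\ell$'' proviso absorbs the handful of small primes $\ell$ for which the Diamond--Taylor hypothesis $p\nmid 6MDQ\ell$ or one of the coprimality conditions above could fail. The main obstacle I anticipate lies in this last transfer of the Eisenstein property: one must verify that the Hecke-theoretic condition at the level of $J_1^D(MQ)$ really does match, through the chain of isogenies and the Jacquet--Langlands correspondence, the condition that the $G_\Q$-representation $A_{f_{0,i}}[p]$ be reducible, and that the diamond operators $S_q$ entering the definition of Eisenstein-ness are correctly identified on both sides.
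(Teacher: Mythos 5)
Your proposal is correct and follows essentially the same route as the paper: use the commutative diagram to transfer the non-injectivity of $\xi_p$ to that of $\vartheta_p$ (controlling the discrepancy by $C_1C_2$), apply the Diamond--Taylor theorem and its companion proposition to identify a reducible mod $p$ representation, push the reducibility through the isogeny chain to $A_{f_{0,i}}[p]$ (controlling by $C_3C_4$), and conclude $p\mid C_5$, with the same constant $C=6C_1C_2C_3C_4C_5MDQ$. You carry out the two steps in the contrapositive order from the paper, but this is a cosmetic rearrangement rather than a different argument.
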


\begin{proof} With notation as before, we show that the integer
\[ C:=6C_1C_2C_3C_4C_5MDQ, \]
which depends only on $M$, $D$ and $Q$, does the job. More precisely, we  show that if $\ell\nmid MDQ$ and the prime $p$ belongs to the support of $\Gamma^{\rm ab}_\ell$ then $p$ divides $C\ell$. Thus fix a prime $\ell\nmid MDQ$. As remarked earlier, if the prime $p$ lies in the support of $\Gamma_\ell^{\rm ab}$ then $\ker(\xi_p)$ is not zero. 

The first step of the proof consists in showing that if $p\nmid C_3C_4MDQ\ell$ but $p$ divides the order of $\ker(\vartheta_p)$ then $p|C_5$. To this aim, fix a maximal ideal $\mathfrak m$ of $\T$ in the support of $\ker(\vartheta_p)$. Then $\mathfrak m$ has residual characteristic $p$ and is Eisenstein because $p\nmid 6MDQ\ell$. Since
\[ \ker(\vartheta_p)\subset J_1^D(MQ)[p]\oplus J_1^D(MQ)[p], \]
it follows that $\mathfrak m$ belongs to the support of $J_1^D(MQ)[p]$. As $p\nmid C_3$, the ideal $\mathfrak m$ belongs to the support of the $\mathbb T$-module $A_i[p]$ for some $i\in\{1,\dots,m\}$. Next, since $p\nmid C_4$, the isogeny $\psi_i:A_i\rightarrow A_{f_{0,i}}$ induces an isomorphism $A_i[p]\simeq A_{f_{0,i}}[p]$ of $G_\Q$-modules where, as before, $f_{0,i}$ is the classical cusp form associated with $f_i$ by the Jacquet--Langlands correspondence. Hence $\mathfrak m$ belongs to the support of the $\T$-module $A_{f_{0,i}}[p]$ as well. But, as pointed out before, $\mathfrak m$ is Eisenstein, so the $G_\Q$-representation $A_{f_{0,i}}[p]$ is reducible, and this proves that $p|C_5$.

The second (and final) step is an easy diagram chasing. Suppose that $p\nmid 6C_3C_4C_5MDQ\ell$. Thanks to the first step, we already know that $\vartheta_p$ is injective (note that the order of $\ker(\vartheta_p)$ is \emph{a priori} a power of $p$). The commutativity of diagram \eqref{diagram-X} shows that
\[ \ker(\xi_p)\subset\ker\bigl((\eta\oplus\eta)\circ(\sigma\oplus\sigma)\bigr), \]
so the order of $\ker(\xi_p)$ divides $C_1C_2$, whence $p|C_1C_2$. \end{proof}

\section{Darmon points on Jacobians of Shimura curves} \label{sec-das}

In this section assume that $D>1$. Our goal is to define Darmon points on Jacobians of Shimura curves over $\Q$ and on closely related abelian varieties. These points are lifts of the local points on elliptic curves introduced by M. Greenberg in \cite{Gr}. The constructions we perform and the conjectures we formulate are the counterparts of those proposed by Dasgupta in \cite[\S 3.3]{Das} when $D=1$, later conjecturally refined by Bertolini, Darmon and Dasgupta in \cite[\S\S 1.2--1.3]{BDD}. We keep the notation of Section \ref{ihara-sec} in force for the rest of the article.

\subsection{Rigid uniformizations of Jacobians of Shimura curves} \label{rig-subs}

In this subsection we recall, and conjecturally refine, the main results of \cite{LRV}.

Denote by $H$ the maximal torsion-free quotient of the cokernel of the map $\pi^*$ introduced in \S \ref{finite-subsec}, let $J_0^D(M\ell)$ be the Jacobian variety of $X_0^D(M\ell)$ and let $J^D_0(M\ell)^{\text{$\ell$-new}}$ be its $\ell$-new quotient, whose dimension will be denoted by $g$; the abelian group $H$ is free of rank $2g$. Now consider the torus
\[ T:=\mathbb G_m\otimes_\Z H \]
where $\mathbb G_m$ denotes the multiplicative group (viewed as a functor on commutative $\Q$-algebras). We will regard $H$ and $T$ as $\Gamma_\ell$-modules with trivial action, where $\Gamma_\ell$ is the Ihara group of \S \ref{ihara-subsec}. Write $\mathcal M_0(H)$ for the $\G_\ell$-module of $H$-valued measures on $\PP^1(\Q_\ell)$ with total mass $0$.

In analogy with what is proved by Dasgupta in \cite{Das} for modular Jacobians, the abelian variety $J^D_0(M\ell)^{\text{$\ell$-new}}$ is uniformized by means of a suitable quotient of $T$. In order to do this, in \cite[Sections 4--6]{LRV} an explicit element $\boldsymbol\mu$ in the cohomology group $H^1\bigl(\G_\ell,\mathcal M_0(H)\bigr)$ is introduced as follows.

Denote by $\mathcal{T}$ the Bruhat--Tits tree of $\PGL_2(\Q_\ell)$, by $\mathcal{V}$ the set of its vertices and by $\mathcal{E}$ the set of its oriented edges. For any edge $e\in\E$ write $s(e),t(e)\in\V$ for its source and its target, respectively, and $\bar e$ for the same edge with reversed orientation. Let $v_\ast$ be the distinguished vertex corresponding to the maximal order $\M_2(\Z_\ell)$ and let $e_\ast$ be the edge emanating from $v_\ast$ and corresponding to the Eichler order consisting of the matrices $\smallmat abcd\in\M_2(\Z_\ell)$ such that $\ell|c$. Set $\hat{v}_\ast:=t(e_\ast)$.

For any abelian group $M$ let $\cF(\V,M)$ and $\cF(\E,M)$ denote the set of maps $m:\V\ra M$ (respectively, $m:\E\ra M$). Both are natural left $\Gamma_\ell$-modules with action $(\g\cdot m)(x):=m(\g^{-1}x)$ for any $\g\in\G_\ell$ and $x\in\V$ or $\E$. Define also
\[ \cF_0(\E,M):=\bigl\{m\in\cF(\E,M)\mid m(\bar e)=-m(e)\bigr\} \]
and
\[ \cF_{\rm har}(M):=\Bigg\{m\in\cF_0(\E,M)\;\bigg|\;\text{$\sum_{s(e)=v}m(e)=0$ for all $v\in\V$}\Bigg\}, \]
which are $\G_\ell$-submodules of $\cF(\E,M)$. The $\cF_{\rm har}(H)$ can be identified with $\mathcal M_0(H)$.

Fix once and for all
\begin{itemize}
\item a prime number $r\nmid\ell DM$;
\item a system of representatives $\{g_i\}_{i=0}^\ell$ for $\Gap\backslash\Ga$;
\item a system of representatives $\mathcal Y=\{\g_e\}_{e\in\E^+}$ for $\Gap\backslash\G_\ell$ such that $\g_e(e)=e_\ast$ and of the form
\[ \g_e=g_{i_1}\hat{g}_{j_1}g_{i_2}\hat{g}_{j_2}\cdots g_{i_s}\hat{g}_{j_s}\qquad\text{with $i_k,j_k\in\{0,\dots,\ell\}$} \]
for every \emph{even} oriented edge $e\in\E^+$.
\end{itemize}
Notice that, with these choices, for every even vertex $v\in\V^+$ there exists an edge $e_0$ with $s(e_0)=v$ such that, putting $\g_v:=\g_{e_0}$, we have $\{\g_e\}_{s(e)=v}=\{g_i\g_v\}_{i=0}^\ell$. This way, the set $\{\g_v\}_{v\in\V^+}$ is also a system of representatives for $\Ga\backslash\G_\ell$ satisfying $\g_v(v)=v_\ast$ for every $v\in\V^+$. Similarly, for any odd vertex $v\in\V^-$ we have $\{\g_e\}_{t(e)=v}=\{\hat{g}_i\g_v\}_{i=0}^\ell$ where $\{\g_v\}_{v\in\V^-}$ is a system of representatives for $\hGa\backslash\G_\ell$ satisfying $\g_v(v)=\hat{v}_\ast$ for every $v\in\V^-$.

The next object made its first appearance in \cite[\S 4]{LRV}, where it is shown that it is indeed well defined. The reader is referred to \cite[Section 2]{LRV} for a discussion of Hecke operators on group homology and cohomology.

\begin{definition} \label{mu}
Set
\[ \boldsymbol\mu:=(T_r-r-1)\cdot\boldsymbol\mu^{\mathcal Y}\in H^1\bigl(\G_\ell,\cF_{\rm har}(H)\bigr)=H^1\bigl(\G_\ell,\mathcal M_0(H)\bigr) \]
where $T_r$ is the $r$-th Hecke operator and $\boldsymbol\mu^{\mathcal Y}$ is the class of the cocycle
\[ \mu^{\mathcal Y}\in Z^1\bigl(\G_\ell,\mathcal M_0(H)\bigr),\qquad\mu^{\mathcal Y}_\g(U_e):=[g_{\g,e}]\quad\text{for all $\g\in\G_\ell$ and $e\in\E^+$}. \]
Here $g_{\g,e}:=\g_e\g\g^{-1}_{\g^{-1}(e)}\in\Gap$ and for every $g\in\Gap$ we write $[g]\in H$ for the class of $g$ in the quotient $H$ of $H_1\bigl(\Gap,\Z\bigr)\simeq\Gap^{\rm ab}$. Finally, $U_e:=\g_e^{-1}(\Z_\ell)$.
\end{definition}

Now we briefly recall from \cite[\S 5.1]{LRV} the integration pairing between $\Div^0\mathcal H_\ell$ and $\mathcal M_0(H)$. For any $d\in\Div^0\mathcal H_\ell$ let $f_d$ denote a rational function on $\PP^1(\C_\ell)$ such that $\mathrm{div}(f_d)=d$. The function $f_d$ is well defined only up to multiplication by constant non-zero functions; however, since the multiplicative integral of a non-zero constant against a measure $\nu\in\mathcal M_0(H)$ is $1$, we get a $\GL_2(\Q_\ell)$-invariant pairing 
\begin{equation} \label{pairing-3}
\Div^0\mathcal H_\ell\times\mathcal M_0(H)\longrightarrow T(\C_\ell),\qquad(d,\nu):=\mint_{\PP^1(\Q_\ell)}f_d\,d\nu.
\end{equation}
We refer the reader to \cite[\S 5.1]{LRV} for the definition of the multiplicative integral as a limit of Riemann products. Finally, by cap product we obtain from \eqref{pairing-3} a pairing
\[ H_1\bigl(\G_\ell,\Div^0\mathcal H_\ell\bigr)\times H^1\bigl(\Gamma_\ell,\mathcal M_0(H)\bigr)\longrightarrow T(\C_\ell). \]
It follows that the cohomology class $\boldsymbol\mu$ defines an integration map on the homology group $H_1(\G_\ell,\Div^0\mathcal H_\ell)$ with values in $T(\C_\ell)$.

Composing the boundary homomorphism $H_2(\G_\ell,\Z)\rightarrow H_1(\G_\ell,\Div^0\mathcal H_\ell)$ induced by the degree map with the integration map produces a further map $H_2(\G_\ell,\Z)\rightarrow T(\C_\ell)$ whose image is denoted by $L$. It turns out that $L$ is a lattice of rank $2g$ in $T(\Q_\ell)$ which is preserved by the action of a suitable Hecke algebra. Finally, let $K_\ell$ denote the (unique, up to isomorphism) unramified quadratic extension of $\Q_\ell$.

The following is \cite[Theorem 1.1]{LRV}.

\begin{theorem} \label{GreenbergConj2}
The quotient $T/L$ admits a Hecke-equivariant isogeny over $K_\ell$ to the rigid analytic space associated with the product of two copies of $J_0^D(M\ell)^{\text{$\ell$-{\rm new}}}$.
\end{theorem}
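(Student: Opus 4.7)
The overall strategy is to match the combinatorial uniformization $T/L$ with the rigid analytic uniformization supplied by Mumford--Raynaud theory for (two copies of) $J_0^D(M\ell)^{\ell\text{-new}}$ after base change to $K_\ell$. First I would observe that $J_0^D(M\ell)^{\ell\text{-new}}$ acquires purely toric reduction over $K_\ell$: since $\ell$ exactly divides the level and $\ell\nmid D$, the Deligne--Rapoport model of $X_0^D(M\ell)$ at $\ell$ is semistable, with special fiber consisting of two copies of the reduction of $X_0^D(M)$ glued along supersingular points, and with dual graph equal to the finite quotient $\Gamma_\ell\backslash\mathcal T$ of the Bruhat--Tits tree of $\PGL_2(\Q_\ell)$. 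Raynaud's uniformization theorem then produces a torus $\widetilde T$ of dimension $2g$ and a discrete lattice $\widetilde L\subset\widetilde T(K_\ell)$ of full rank such that $\widetilde T/\widetilde L$ is rigid analytically isomorphic over $K_\ell$ to $J_0^D(M\ell)^{\ell\text{-new}}\times J_0^D(M\ell)^{\ell\text{-new}}$; the appearance of two copies reflects the rank computation showing that $H$ has rank $2g$, which is encoded in the two degeneracy maps $\pi_1,\pi_2$ and, equivalently, in the Atkin--Lehner involution at $\ell$.

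The second step is an identification of character groups $X^\ast(\widetilde T)\simeq H$ as Hecke-modules, up to a finite-index ambiguity. On one side, $X^\ast(\widetilde T)$ is canonically $H_1$ of the dual graph. On the other side, the long exact sequence \eqref{ex-seq-iso}, together with the injectivity of $\pi_\ast\circ\pi^\ast$ already exploited in Proposition \ref{finite-ab-prop}, realizes $H$ as a natural quotient computing the same graph homology. A direct comparison via the tree action furnishes a Hecke-equivariant map between the two character groups that is an isomorphism after tensoring with $\Q$, both sides having rank $2g$.

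The third and crucial step is to match the period lattices: the lattice $L$ defined by the multiplicative integration pairing against the Ihara cocycle $\boldsymbol\mu$ should coincide, up to commensurability, with the Mumford lattice $\widetilde L$. The cocycle $\mu^{\mathcal Y}_\gamma$ is designed to record the local class of $\gamma\in\Gamma_0^D(M\ell)$ along edges of the tree, so integrating it against degree-zero divisors produces periods of rigid analytic theta functions attached to $\widetilde T$; the factor $(T_r-r-1)$ plays the role of a Hecke-theoretic correction ensuring that only the desired (non-Eisenstein) components contribute. A formal compatibility between the two integration pairings, combined with Hecke-equivariance and multiplicity one on each Hecke eigencomponent, forces the two lattices to agree up to finite index, yielding the desired isogeny over $K_\ell$.

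The principal obstacle lies in this final period comparison. In the classical case $D=1$, Dasgupta's argument hinges on a refinement of the Greenberg--Stevens theorem equating $\mathcal L$-invariants coming from modular symbols with those from $p$-adic $L$-functions, and modular symbols are unavailable for division quaternion algebras. The natural substitute I would pursue is to interpolate the required equality in a Hida family of quaternionic eigenforms, on whose weight space the $\mathcal L$-invariant is a rigid analytic function; matching it at a single classical weight -- using the \v{C}erednik--Drinfeld uniformization at an auxiliary prime $q\mid D$ as a starting point -- transports the identity across the family by analytic continuation, thereby establishing the theorem.
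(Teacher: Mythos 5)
The paper itself gives no proof of this statement: it is quoted directly from \cite[Theorem 1.1]{LRV}. Your sketch captures the broad architecture -- torus and lattice from a semistable model, character-group identification, period matching -- but it contains a concrete error in the first step and leaves the crucial third step essentially open.

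The error: the dual graph of the Deligne--Rapoport model of $X_0^D(M\ell)$ at $\ell$ (with $\ell\nmid D$) is a bipartite graph with two vertices (the two components of the special fiber) and one edge per supersingular point; it is \emph{not} $\Gamma_\ell\backslash\mathcal T$. Since $\Gamma_\ell\subset\SL_2(\Q_\ell)$ preserves the bipartition of $\mathcal T$ and, by strong approximation, acts transitively on even vertices, on odd vertices and on unoriented edges, the quotient $\Gamma_\ell\backslash\mathcal T$ is a single segment with trivial $H_1$, whereas the dual graph of the model has $H_1$ of rank equal to the toric rank of $J_0^D(M\ell)$ at $\ell$, namely $g$. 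Relatedly, $H$ has rank $2g$, so $T=\mathbb G_m\otimes H$ has \emph{twice} the dimension of the Raynaud torus of $J_0^D(M\ell)$ at $\ell$: the two copies of $J_0^D(M\ell)^{\text{$\ell$-new}}$ are built in from the start via the decomposition $T/L\to T_+/L_+\oplus T_-/L_-$ of \eqref{T/L-isogeny-eq} under the Atkin--Lehner involution $W_\infty$, each summand uniformizing a single copy, not produced by doubling a $g$-dimensional Raynaud torus a posteriori. As for the period comparison, the route you propose -- $\mathcal L$-invariants interpolated in a Hida family, anchored by \v{C}erednik--Drinfeld at an auxiliary $q\mid D$ -- is essentially the argument of Dasgupta--Greenberg \cite{DG} (whose title is ``$\mathscr L$-invariants and Shimura curves''), which the present paper explicitly records as proceeding by \emph{different} methods from \cite{LRV}, the latter resting instead on a direct group-cohomological computation of $\ord_\ell(L)$ (\cite[Proposition 6.3]{LRV}, invoked in \S\ref{sec-galois-rep}). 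Moreover, \v{C}erednik--Drinfeld uniformizes a quaternion algebra ramified at $\ell$, whereas here $\ell\nmid D$; transferring an exact period-lattice identity, not merely an isogeny class, across Jacquet--Langlands is itself a substantial problem of roughly the same difficulty as the theorem, and your sketch does not address it. As written, the period comparison -- the heart of the statement -- is not established.
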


In fact, something more precise can be said. Write $W_\infty$ for the Atkin--Lehner involution defined in \cite[\S 2.2]{LRV}, and for any $\Z[W_\infty]$-module $M$ and sign $\epsilon\in\{\pm\}$ set $M_\epsilon:=M/(W_\infty-\epsilon1)$. Define
\[ T_\epsilon:=\mathbb G_m\otimes_\Z H_\epsilon. \]
Since the cokernel of the canonical map $H\rightarrow H_+\oplus H_-$ is supported at $2$, it follows that there exists an isogeny of $2$-power degree
\begin{equation} \label{T/L-isogeny-eq}
T/L\longrightarrow T_+/L_+\oplus T_-/L_-
\end{equation}
of rigid analytic tori over $\Q_\ell$. Then Theorem \ref{GreenbergConj2} is proved in \cite{LRV} by showing that for all $\epsilon\in\{+,-\}$ the quotient $T_\epsilon/L_\epsilon$ admits a Hecke-equivariant isogeny over $K_{\ell}$ to the rigid analytic space associated with the abelian variety $J_0^D(M\ell)^{\text{$\ell$-{\rm new}}}$. In the sequel we shall assume the following variant of \cite[Conjecture 1.5]{BDD}.

\begin{conjecture} \label{rigid-analytic-conjecture}
If $\epsilon\in\{+,-\}$ then the quotient $T_\epsilon/L_\epsilon$ is isomorphic over $K_\ell$ to the rigid analytic space associated with an abelian variety $J^{(\ell)}_\epsilon$ defined over $\Q$.
\end{conjecture}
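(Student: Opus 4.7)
The strategy is to realize $J^{(\ell)}_\epsilon$ as a specific member of the $\Q$-isogeny class of the $\epsilon$-eigenspace of $J_0^D(M\ell)^{\text{$\ell$-new}}$ under the Atkin--Lehner involution $W_\infty$, chosen so that its rigid analytic uniformization matches $(T_\epsilon, L_\epsilon)$ exactly rather than merely up to isogeny.

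First, I would revisit the proof of Theorem \ref{GreenbergConj2} in \cite{LRV} and track the action of $W_\infty$ through each of its steps. Since $W_\infty$ is a $\Q$-rational involution on $J_0^D(M\ell)^{\text{$\ell$-new}}$, one obtains a $\Q$-isogeny decomposition $J_0^D(M\ell)^{\text{$\ell$-new}}\sim A_+\times A_-$ into its $\pm 1$ eigenspaces. A careful compatibility check should upgrade the isogeny of \eqref{T/L-isogeny-eq}, composed with that of Theorem \ref{GreenbergConj2}, to Hecke-equivariant isogenies
\[ T_\epsilon/L_\epsilon\longrightarrow A_\epsilon\otimes_\Q K_\ell \]
of rigid analytic spaces for each sign $\epsilon$.

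Second, I would appeal to non-archimedean uniformization theory in the style of Mumford--Raynaud: analytic tori modulo full-rank discrete lattices correspond to abelian varieties with totally multiplicative reduction. Because $T_\epsilon$ is defined over $\Q$ as an algebraic torus and $L_\epsilon$ is produced by $\Gal(\bar\Q_\ell/\Q_\ell)$-equivariant constructions in the group cohomology of $\Gamma_\ell\subset\SL_2(\Q_\ell)$, the quotient $T_\epsilon/L_\epsilon$ should descend automatically to $\Q_\ell$. Global descent to $\Q$ would then follow by identifying the resulting $\Q_\ell$-analytic abelian variety, via its $\ell$-adic Tate module and Hecke action, with the $\ell$-adic realization of a uniquely determined abelian variety in the $\Q$-isogeny class of $A_\epsilon$---namely the quotient of $A_\epsilon$ by the finite $\Q$-rational subgroup scheme accounting for the discrepancy between the period lattice of $A_\epsilon$ and the lattice $L_\epsilon$.

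The main obstacle lies precisely in this last step: one must verify that the integral discrepancy between $L_\epsilon$ and the natural period lattice of $A_\epsilon$ is captured by a $\Q$-rational subgroup scheme. The lattice $L_\epsilon$ arises from Definition \ref{mu} through group-cohomological operations on $\Gamma_\ell$, and a priori there is no reason for its integral structure to coincide with the one naturally attached to $A_\epsilon$. In the split case $D=1$, Dasgupta's strengthening in \cite{Das} of the Greenberg--Stevens theorem provides exactly such an equality at the integral level via an explicit description of the $\cL$-invariant; in the division quaternionic setting, the analogue would presumably require combining the Jacquet--Langlands correspondence with an integral refinement of the monodromy filtration on the $\ell$-adic Tate module of $J_0^D(M\ell)^{\text{$\ell$-new}}$, and this is the step I expect to be the most delicate.
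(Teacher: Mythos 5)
The statement you were asked to prove is in fact stated as a \emph{conjecture} in the paper (Conjecture \ref{rigid-analytic-conjecture}), explicitly introduced as a variant of \cite[Conjecture 1.5]{BDD}; the authors assume it for the rest of the article and do not offer a proof. What Theorem \ref{GreenbergConj2} (that is, \cite[Theorem 1.1]{LRV}) establishes unconditionally is only an isogeny statement: $T_\epsilon/L_\epsilon$ is Hecke-equivariantly \emph{isogenous} over $K_\ell$ to the rigid analytic space attached to $J_0^D(M\ell)^{\text{$\ell$-new}}$. Upgrading this isogeny to a genuine isomorphism with a specific abelian variety over $\Q$ --- equivalently, pinning down the integral structure of the lattice $L_\epsilon$ inside the period lattice of (a model of) $J_0^D(M\ell)^{\text{$\ell$-new}}_\epsilon$ --- is precisely the content of the conjecture and is open.

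Your proposal is therefore not a proof but a plausibility argument, and to your credit you correctly identify the gap: nothing in the construction of $L_\epsilon$ via the cocycle $\boldsymbol\mu$ and the map $H_2(\Gamma_\ell,\Z)\to T(\C_\ell)$ forces it to equal, integrally, the monodromy lattice of a $\Q$-model of $J_0^D(M\ell)^{\text{$\ell$-new}}_\epsilon$. Two further caveats are worth flagging. First, your appeal to $\Gal(\bar\Q_\ell/\Q_\ell)$-equivariance to descend $T_\epsilon/L_\epsilon$ to $\Q_\ell$ already requires a choice of Galois action, and the paper remarks that one \emph{expects} the isomorphism to be defined over $\Q_\ell$ only after letting the nontrivial element of $\Gal(K_\ell/\Q_\ell)$ act via $U_\ell$; this is part of the conjectural package, not a given. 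Second, in the split case $D=1$ Dasgupta's work in \cite{Das} does not in fact settle the integral lattice equality either --- \cite[Conjecture 1.5]{BDD} remains a conjecture there too --- so the reduction you sketch to ``combine Jacquet--Langlands with Dasgupta's integral refinement'' would not terminate in a known result. In short, your proposal recapitulates why the statement is a conjecture rather than proving it.
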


As in \emph{loc. cit.}, we expect that the abelian variety $J_\epsilon^{(\ell)}$ will be endowed with a natural action of the Hecke algebra and that the isomorphism of Conjecture \ref{rigid-analytic-conjecture} will be Hecke equivariant; moreover, we also expect that if one lets the non-trivial element of $\Gal(K_\ell/\Q_\ell)$ act on $T/L$ via the Hecke operator $U_\ell$ then the above isomorphism will be defined over $\Q_\ell$. Granting Conjecture \ref{rigid-analytic-conjecture}, fix once and for all isomorphisms
\begin{equation} \label{rigid-isom-eq}
T_\pm/L_\pm\overset\simeq\longrightarrow J^{(\ell)}_\pm
\end{equation}
defined over $K_\ell$.

\subsection{Darmon points on $J^{(\ell)}_\pm$ and on $J_0^D(M\ell)^{\text{$\ell$-{\rm new}}}$} \label{SH-subsection}

In this subsection we also assume that $\ell$ is inert in $K$, so $K_\ell$ is nothing other than the completion of $K$ at the prime above $\ell$. We freely use the notation of \cite{LRV}, to which we refer for all details. Since $\ell$ is kept fixed in the discussion to follow, for simplicity we set
\[ \Gamma:=\Gamma_\ell. \]
In \cite[\S 7.3]{LRV} a class $\boldsymbol{d}\in H^2\bigl(\Gamma,T(\C_\ell)\bigr)$ is introduced whose image in $H^2\bigl(\Gamma,T(\C_\ell)/L\bigr)$ is trivial; moreover, the lattice $L$ is the smallest subgroup of $T(\Q_\ell)$ with this property. Choose a representative $\mu$ of $\boldsymbol\mu$. If $z\in K_\ell-\Q_\ell$ then $\boldsymbol{d}$ can be represented by the $2$-cocycle $d=d_z\in Z^2\bigl(\Gamma,T(K_\ell)\bigr)$ given by
\begin{equation} \label{desc-d}
(\gamma_1,\gamma_2)\longmapsto\mint_{\PP^1(\Q_\ell)}\frac{t-\gamma^{-1}_1(z)}{t-z}d\mu_{\gamma_2}(t).
\end{equation}
It follows that there exists a map $\beta=\beta_z:\Gamma\rightarrow T/L$ such that
\begin{equation} \label{def-beta}
\beta_{\gamma_1\gamma_2}\cdot\beta_{\gamma_1}^{-1}\cdot\beta_{\gamma_2}^{-1}\equiv d_{\gamma_1,\gamma_2}\pmod{L}
\end{equation}
for all $\gamma_1,\gamma_2\in\Gamma$. Notice that $\beta$ is well defined only up to elements of $\Hom(\Gamma,T/L)$.

Denote $\vartheta:K\hookrightarrow\R$ the embedding fixed at the beginning of this paper and choose also an embedding $K\hookrightarrow\C_{\ell}$. If $\cO$ is an order of $K$ then an embedding $\psi:K\hookrightarrow B$ is said to be an \emph{optimal embedding of $\cO$ into $R$} if $\psi^{-1}(R)=\cO$. Denote $\text{Emb}(\cO,R)$ the set of such embeddings. For every $\psi\in\text{Emb}(\cO,R)$ there is a unique $z_\psi\in K_\ell-\Q_\ell$ such that
\[ \psi(\alpha)\binom{z_\psi}{1}=\alpha\binom{z_\psi}{1}\qquad\text{for all $\alpha\in K$}. \]
By Dirichlet's unit theorem, the abelian group of units in $\cO$ of norm $1$ is free of rank $1$; let $\varepsilon$ be the generator of this group such that $\vartheta(\varepsilon)>1$, then set $\gamma_\psi:=\psi(\varepsilon)\in\Gamma_0^D(M)$.

Let $t=t_\ell$ denote the exponent of $\Gamma^{\rm ab}$. Set
\[ \mathcal P_\psi:=t\cdot\beta_{z_\psi}(\gamma_\psi)\in T(K_\ell)/L. \]
Multiplication by $t$ ensures that $\mathcal P_\psi$ does not depend on the choice of a map $\beta$ as above. 

\begin{proposition} \label{lemma2.2}
The point $\mathcal P_\psi$ does not depend on the choice of a representative of $\boldsymbol\mu$. 
\end{proposition}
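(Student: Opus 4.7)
The plan is to show that if $\mu'=\mu+\delta\nu$ is another representative of $\boldsymbol\mu$, for some $\nu\in\mathcal M_0(H)$, then the resulting points $t\cdot\beta'_{z_\psi}(\gamma_\psi)$ and $t\cdot\beta_{z_\psi}(\gamma_\psi)$ coincide in $T(K_\ell)/L$. The key observation is that the $2$-cocycle $d_z$ of \eqref{desc-d} is the cup product of $\mu$ with the $\Div^0\mathcal H_\ell$-valued $1$-cocycle $a_\gamma:=\gamma^{-1}[z]-[z]$ under the pairing \eqref{pairing-3}. Since cup product descends to cohomology, the class of $d_z$ in $H^2(\Gamma,T(\C_\ell))$ depends only on $\boldsymbol\mu$; what needs to be done is to write the witnessing coboundary down explicitly.

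Concretely, I would introduce the $1$-cochain
\[
\alpha_\gamma \;:=\; \mint_{\PP^1(\Q_\ell)}\frac{t-\gamma^{-1}z}{t-z}\,d\nu(t)\;\in\;T(\C_\ell)
\]
and verify the identity
\[
d'_{\gamma_1,\gamma_2}\cdot d_{\gamma_1,\gamma_2}^{-1} \;=\; \alpha_{\gamma_1\gamma_2}\cdot\alpha_{\gamma_1}^{-1}\cdot\alpha_{\gamma_2}^{-1}
\]
in $T(\C_\ell)$. The inputs are the $\GL_2(\Q_\ell)$-invariance of \eqref{pairing-3}, used to rewrite $\mint f\,d(\gamma_2\nu)=\mint(f\circ\gamma_2)\,d\nu$, together with the elementary M\"obius identity
\[
\frac{\gamma_2 t-\gamma_1^{-1}z}{\gamma_2 t-z} \;=\; \lambda\cdot\frac{t-\gamma_2^{-1}\gamma_1^{-1}z}{t-\gamma_2^{-1}z}
\]
for a scalar $\lambda\in\C_\ell^\times$ independent of $t$; this scalar contributes $\lambda^{\nu(\PP^1(\Q_\ell))}=1$ to the multiplicative integral, because $\nu$ has total mass zero. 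In view of \eqref{def-beta}, the displayed identity then permits me to take $\beta':=\beta\cdot\alpha$ as a valid primitive for $d'$, modulo the usual $\Hom(\Gamma,T/L)$-ambiguity, which is annihilated on every element of $\Gamma$ after multiplication by the exponent $t=t_\ell$ of $\Gamma^{\mathrm{ab}}$.

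The decisive step is to evaluate $\alpha_{\gamma_\psi}$ at $z=z_\psi$. The defining relation $\psi(\alpha)\binom{z_\psi}{1}=\alpha\binom{z_\psi}{1}$, applied to $\alpha=\varepsilon$, says exactly that $\gamma_\psi=\psi(\varepsilon)$ fixes $z_\psi$ under the M\"obius action on $\mathcal H_\ell\cup\PP^1(K_\ell)$, so $\gamma_\psi^{-1}(z_\psi)=z_\psi$ and the integrand defining $\alpha_{\gamma_\psi}(z_\psi)$ collapses to the constant function $1$. Since $\nu$ has total mass zero, $\alpha_{\gamma_\psi}(z_\psi)=\mint_{\PP^1(\Q_\ell)}1\,d\nu$ is the identity element of $T(\C_\ell)=\C_\ell^\times\otimes_\Z H$, so $\beta'_{z_\psi}(\gamma_\psi)=\beta_{z_\psi}(\gamma_\psi)$ modulo $\Hom(\Gamma,T/L)$. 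Multiplying by $t$ then absorbs the residual homomorphism-valued correction and yields the desired equality of Darmon points in $T(K_\ell)/L$.

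The principal obstacle I anticipate is the bookkeeping in the middle paragraph: the precise form of the $\GL_2(\Q_\ell)$-invariance of the multiplicative integration pairing, tracking the left-versus-right conventions for the $\Gamma$-actions on measures and divisors, and confirming the cancellation of the M\"obius scalar $\lambda$ against the vanishing total mass of $\nu$. Once the explicit coboundary relation between $d'$ and $d$ is in place, the conclusion follows at once from the single geometric fact that $z_\psi$ is the fixed point of $\psi(K^\times)$ on $\mathcal H_\ell$.
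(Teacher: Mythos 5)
Your proposal is correct and follows the paper's proof essentially step for step: the explicit integral $\alpha_\gamma$ you introduce is the paper's $\nu(\gamma)$ (their $m$ is your $\nu$), the collapse of $\alpha_{\gamma_\psi}(z_\psi)$ to the identity using $\gamma_\psi(z_\psi)=z_\psi$ together with the total-mass-zero condition appears verbatim, and multiplication by $t=t_\ell$ kills the $\Hom(\Gamma,T/L)$-ambiguity exactly as you say. The only quibble is that $a_\gamma=\gamma^{-1}[z]-[z]$ is not quite a $1$-cocycle for the left $\Gamma$-action on $\Div^0\mathcal H_\ell$ (that role is played by $\gamma\mapsto\gamma[z]-[z]$, which reproduces \eqref{desc-d} via the cup product and the $\GL_2(\Q_\ell)$-invariance of the pairing); this cosmetic slip in the motivating remark does not affect the explicit coboundary verification you outline.
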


\begin{proof} Let $\mu$ and $\mu'$ be two representatives for $\boldsymbol\mu$. It turns out that the $2$-cocycles $d_{z_\psi}$ and $d'_{z_\psi}$ defined as in \eqref{desc-d} in terms of $\mu$ and $\mu'$, respectively, are cohomologous. More precisely, there exists a map $\nu:\Gamma\rightarrow T(K_p)$ such that 
\[ d_{z_\psi}(\g_1,\g_2)=d'_{z_\psi}(\g_1,\g_2)\cdot\nu(\g_1\g_2)\cdot\nu(\g_1)^{-1}\cdot\nu(\g_2)^{-1} \]
for all $\g_1,\g_2\in\Gamma$. One can explicitly write an expression for $\nu$ as follows. Let $m\in\mathcal M_0(H)$ be such that $\mu_\g=\mu'_\g+\g(m)-m$ for all $\g\in\Gamma$; a direct computation shows that 
\begin{equation} \label{eq-k}
\nu(\g)=\bigg(\mint_{\PP^1(\Q_p)}\frac{s-\gamma^{-1}(z_\psi)}{s-z_\psi}dm(s)\bigg)\cdot\varphi(\g)
\end{equation} 
where $\varphi:\Gamma\rightarrow T(K_p)$ is a homomorphism. Write $\bar\nu$ for the composition of $\nu$ with the projection onto $T(K_p)/L$. If $\beta_{z_\psi}:\Gamma\rightarrow T/L$ (respectively, $\beta'_{z_\psi}:\Gamma\rightarrow T/L$) splits $d_{z_\psi}$ (respectively, $d'_{z_\psi}$) modulo $L$ then $\beta_{z_\psi}=\beta'_{z_\psi}\cdot\bar\nu\cdot\varphi'$ for a suitable homomorphism $\varphi':\Gamma\rightarrow T/L$. It follows that 
\begin{equation} \label{eq--}
t\cdot\beta_{z_\psi}=\bigl(t\cdot\beta'_{z_\psi}\bigr)\cdot(t\cdot\bar\nu).
\end{equation} 
Since $\gamma_\psi(z_\psi)=z_\psi$ and $m$ has total mass $0$, equation \eqref{eq-k} shows that $t\cdot\nu(\gamma_\psi)=1$. By definition of the point $\mathcal P_\psi$, the result follows from this and equation \eqref{eq--}. \end{proof}

The next proposition studies the dependence of $\mathcal P_\psi$ on $\psi$.
 
\begin{proposition}
The point $\mathcal P_\psi$ depends only on the $\Gamma_0^D(M)$-conjugacy class of $\psi$. 
\end{proposition}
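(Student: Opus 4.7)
The plan is to translate the conjugation $\psi'=\gamma\psi\gamma^{-1}$ with $\gamma\in\Gamma_0^D(M)\subseteq\Gamma$ into the $(z_\psi,\gamma_\psi)$-language, show that the resulting change in $\beta_{z_\psi}(\gamma_\psi)$ amounts to a coboundary together with the value of a homomorphism $\Gamma\to T/L$, and finally observe that the homomorphism contribution is killed upon multiplication by $t$.

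First I would verify that conjugation moves the data in the expected way. From $\psi'(\alpha)=\gamma\psi(\alpha)\gamma^{-1}$ one reads off directly that $z_{\psi'}=\gamma\cdot z_\psi$ (M\"obius action via $\iota_\ell$), while the same generator $\varepsilon$ of the norm-$1$ units works for both embeddings, giving $\gamma_{\psi'}=\gamma\gamma_\psi\gamma^{-1}$. Hence the task reduces to proving
\begin{equation*}
t\cdot\beta_{\gamma z_\psi}(\gamma\gamma_\psi\gamma^{-1})\equiv t\cdot\beta_{z_\psi}(\gamma_\psi)\pmod L.
\end{equation*}

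Second, the main computation is to establish a $\Gamma$-equivariance of the cocycle family $\{d_z\}_z$: concretely, that there exists a $1$-cochain $\nu=\nu_\gamma\colon\Gamma\to T(K_\ell)$ such that
\begin{equation*}
d_{\gamma z_\psi}(\gamma\gamma_1\gamma^{-1},\gamma\gamma_2\gamma^{-1})=d_{z_\psi}(\gamma_1,\gamma_2)\cdot\nu(\gamma_1\gamma_2)\cdot\nu(\gamma_1)^{-1}\cdot\nu(\gamma_2)^{-1}
\end{equation*}
for all $\gamma_1,\gamma_2\in\Gamma$. To produce $\nu$ I would substitute $t=\gamma s$ in the multiplicative integral on the left-hand side, apply the M\"obius cross-ratio identity
\begin{equation*}
\frac{\gamma s-\gamma a}{\gamma s-\gamma b}=\frac{(s-a)(cb+d)}{(s-b)(ca+d)},
\end{equation*}
and discard the $s$-independent factors using the fact that a constant integrates to $1$ against a measure of total mass $0$. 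The residual integrand is $\frac{s-\gamma_1^{-1}z_\psi}{s-z_\psi}$ against a transported measure, and the $1$-cocycle identity $\mu_{\gamma\gamma_2\gamma^{-1}}=\mu_\gamma+\gamma\mu_{\gamma_2}-(\gamma\gamma_2\gamma^{-1})\mu_\gamma$ unpacks it into a main term reproducing $d_{z_\psi}(\gamma_1,\gamma_2)$ plus the coboundary of an explicit $\nu$ analogous to that in (\ref{eq-k}).

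Third, by (\ref{def-beta}) the identity above implies that both $\gamma_1\mapsto\beta_{\gamma z_\psi}(\gamma\gamma_1\gamma^{-1})$ and $\gamma_1\mapsto\beta_{z_\psi}(\gamma_1)\cdot\bar\nu(\gamma_1)$, with $\bar\nu$ the image of $\nu$ in $T/L$, split the same $2$-cocycle modulo $L$, so they differ by a homomorphism $\varphi\colon\Gamma\to T/L$. Since $T/L$ is abelian, $\varphi$ factors through $\Gamma^{\rm ab}$, whose exponent is $t$; hence $t\cdot\varphi(\gamma_\psi)=0$. The residual $\bar\nu$-contribution is handled exactly as in the proof of Proposition \ref{lemma2.2}: because $\gamma_\psi$ fixes $z_\psi$ and the measure involved has total mass $0$, the explicit formula for $\nu(\gamma_\psi)$ collapses, and its $t$-th multiple lies in $L$. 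Combining everything gives the desired equality $\mathcal P_{\psi'}=\mathcal P_\psi$.

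The main obstacle is the cocycle manipulation in the second step. The M\"obius trick and the mass-$0$ cancellation are standard, but $\mu$ is a genuine $1$-cocycle and not $\Gamma$-invariant, so the expansion of $\mu_{\gamma\gamma_2\gamma^{-1}}$ produces several cross terms involving $\mu_\gamma$ that must be reassembled into the coboundary of a single, well-defined $\nu$. One must then check, as in Proposition \ref{lemma2.2}, that this $\nu$ is tame enough on the stabilising element $\gamma_\psi$ for the passage to $T/L$ followed by multiplication by $t$ to eliminate the correction.
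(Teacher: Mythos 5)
Your proposal is correct, and it takes a genuinely different route from the paper's proof. The paper sidesteps the explicit coboundary computation entirely: having written $\psi'=\gamma\psi\gamma^{-1}$ with $\gamma\in\Gamma_0^D(M)$, it replaces the radial system $\mathcal Y_{\rm rad}$ by the conjugated system $\mathcal Y'_{\rm rad}=\{\gamma\gamma_{\gamma^{-1}(e)}\gamma^{-1}\}_{e\in\E^+}$, checks that this is again a radial system (so $\mu^{\mathcal Y'_{\rm rad}}$ represents the same class $\boldsymbol\mu$ by \cite[Lemma 4.11]{LRV}), and observes that with this choice the $2$-cocycles at $z_\psi$ and $z_{\psi'}$ agree on the nose, viz. the identity \eqref{eq-prop2.4}. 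One may then simply take $\beta_{z_\psi}(\tilde\gamma)=\beta'_{z_{\psi'}}(\gamma\tilde\gamma\gamma^{-1})$ and invoke Proposition~\ref{lemma2.2} (independence of the representative of $\boldsymbol\mu$) to conclude. You instead keep the cocycle $\mu$ fixed and carry out the coboundary bookkeeping explicitly: after the substitution $t\mapsto\gamma s$, the cross-ratio identity, the total-mass-$0$ cancellation, and the $1$-cocycle expansion $\mu_{\gamma\gamma_2\gamma^{-1}}=\mu_\gamma+\gamma\mu_{\gamma_2}-(\gamma\gamma_2\gamma^{-1})\mu_\gamma$, the discrepancy between $d_{\gamma z_\psi}(\gamma\gamma_1\gamma^{-1},\gamma\gamma_2\gamma^{-1})$ and $d_{z_\psi}(\gamma_1,\gamma_2)$ is the coboundary of $\nu(\g)=\mint\frac{s-\g^{-1}z_\psi}{s-z_\psi}\,dm(s)$ with $m=\gamma^{-1}\cdot\mu_\gamma$, after which the argument concludes exactly as in Proposition~\ref{lemma2.2}: the residual homomorphism is killed by $t$ since $\Gamma^{\rm ab}$ has exponent $t$, and $\nu(\gamma_\psi)=1$ because $\gamma_\psi$ fixes $z_\psi$ and $m$ has total mass $0$. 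Your route is more computational but conceptually self-contained, whereas the paper's is shorter but leans on the conjugation-stability of radial systems established in \cite{LRV}; you also correctly identify that the whole reduction really lives inside the mechanism already deployed in Proposition~\ref{lemma2.2}, which the paper reuses by reference rather than by re-derivation. One small caveat to watch in the write-up is the sign/order convention in the coboundary relation (whether $\nu(\gamma_1\gamma_2)\nu(\gamma_1)^{-1}\nu(\gamma_2)^{-1}$ or its inverse appears), but this affects nothing since the correction is absorbed into $t\cdot\bar\nu$ either way.
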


\begin{proof} Fix an embedding $\psi$, an element $\gamma\in\Gamma_0^D(M)$ and set $\psi':=\gamma\psi\gamma^{-1}$. As in \cite[\S 4.2]{LRV}, choose a radial (in the sense of \cite[Definition 4.7]{LRV}) system $\mathcal Y_{\rm rad}=\{\gamma_e\}_{e\in\mathcal E^+}$ to compute $\boldsymbol\mu$, and introduce the set 
\[ \mathcal Y'_{\rm rad}:=\bigl\{\gamma'_e:=\gamma\gamma_{\gamma^{-1}(e)}\gamma^{-1}\bigr\}_{e\in\mathcal E^+}. \] 
One checks that $\mathcal Y_{\rm rad}'$ is again a radial system. A simple computation shows that 
\begin{equation} \label{eq-prop2.4}
\mint_{\PP^1(\Q_p)}\frac{t-\gamma_1^{-1}z_\psi}{t-z_\psi}d\mu^{\mathcal Y_{\rm rad}}_{\gamma_2}(t)=\mint_{\PP^1(\Q_p)}\frac{t-\gamma\gamma_1^{-1}\gamma^{-1}z_{\psi'}}{t-z_{\psi'}}d\mu^{\mathcal Y_{\rm rad}'}_{\gamma\gamma_2\gamma^{-1}}(t).
\end{equation}
If $\beta'_{z_{\psi'}}$ splits the $2$-cocycle 
\[ (\gamma_1,\gamma_2)\longmapsto\mint_{\PP^1(\Q_p)}\frac{t-\gamma_1^{-1}z_{\psi'}}{t-z_{\psi'}}d\mu^{\mathcal Y_{\rm rad}'}_{\gamma_2}(t) \] 
then equation \eqref{eq-prop2.4} ensures that for all $\tilde\gamma\in\Gamma$ we can take $\beta_{z_\psi}(\tilde\gamma)=\beta'_{z_{\psi'}}(\gamma\tilde\gamma\gamma^{-1})$. By Proposition \ref{lemma2.2}, the point $\mathcal P_{\psi'}$ does not depend on the choice of the representative for $\boldsymbol\mu$. Since $\mu^{\mathcal Y_{\rm rad}'}$ is a representative of $\boldsymbol\mu$ by \cite[Lemma 4.11]{LRV}, it follows that 
\[ \mathcal P_{\psi'}=t\cdot\beta'_{z_{\psi'}}(\gamma_{\psi'})=t\cdot\beta_{z_\psi}(\gamma_\psi)=\mathcal P_\psi, \] 
as was to be shown. \end{proof}

Although, in light of this result, the symbol $\mathcal P_{[\psi]}$ would be more appropriate, for notational simplicity we will continue to write $\mathcal P_\psi$ for the points we have just introduced. However, the reader should always keep in mind that $\mathcal P_\psi=\mathcal P_{\psi'}$ whenever $\psi$ and $\psi'$ are $\Gamma_0^D(M)$-conjugate.

Now let $\nu_\pm:T/L\rightarrow J^{(\ell)}_\pm$ be the two maps obtained by composing isogeny \eqref{T/L-isogeny-eq} with the canonical projections onto the factors and then with isomorphisms \eqref{rigid-isom-eq}.

\begin{definition} \label{SH-J-dfn}
The \emph{Darmon points on $J^{(\ell)}_\pm$ attached to $\cO$} are the points
\[ P_\psi^\pm:=\nu_\pm(\mathcal P_\psi)\in J_\pm^{(\ell)}(K_\ell) \]
for $\psi\in\Emb(\cO,R)$.
\end{definition}

When a choice of sign $\epsilon\in\{\pm\}$ has been made the point $P_\psi^\epsilon$ will be denoted simply by $P_\psi$ (or even by $P_d$ where $d$ is the conductor of $\cO$, if the embedding $\psi$ is understood). Although in this article we shall ultimately work with points on $J^{(\ell)}_\epsilon$ for a fixed choice of sign $\epsilon$, it is worthwhile to explicitly introduce Darmon points on Jacobians of Shimura curves. To do this, choose isogenies
\begin{equation} \label{T/L-sign-isogeny-eq}
T_\pm/L_\pm\longrightarrow J_0^D(M\ell)^{\text{$\ell$-{\rm new}}}
\end{equation}
over $K_\ell$ and write $\lambda_\pm:T/L\rightarrow J_0^D(M\ell)^{\text{$\ell$-{\rm new}}}$ for the two maps obtained by composing isogeny \eqref{T/L-isogeny-eq} with the canonical projections onto the factors and then with isogenies \eqref{T/L-sign-isogeny-eq}.

\begin{definition} \label{SH-Jacobian-dfn}
The \emph{Darmon points on $J_0^D(M\ell)^{\text{$\ell$-{\rm new}}}$ attached to $\cO$} are the points
\[ \lambda_\pm(\mathcal P_\psi)\in J_0^D(M\ell)^{\text{$\ell$-{\rm new}}}(K_\ell) \]
for $\psi\in\Emb(\cO,R)$.
\end{definition}

If $A$ is an elliptic curve over $\Q$ of conductor $DM\ell$ then the points introduced in Definition \ref{SH-Jacobian-dfn} map to the local points on $A$ defined by M. Greenberg in \cite{Gr} under the modular projection $J_0^D(M\ell)^{\text{$\ell$-{\rm new}}}\rightarrow A$.

We conclude this subsection by stating the algebraicity properties conjecturally satisfied by our Darmon points. Write $H$ for the narrow ring class field of $K$ attached to $\cO$ and denote
\[ (\fr a,\psi)\longmapsto\psi^\fr a \]
the action of $\fr a\in\Pic^+(\cO)$ on ($\Gamma_0^D(M)$-conjugacy classes of) embeddings in $\Emb(\cO,R)$ as described, e.g., in \cite[Ch. III, \S 5C]{Vi} (see also Proposition \ref{emb-bijection-prop}). Finally, let $\Pic^+(\cO)$ be the narrow class group of $\cO$ and let
\[ \text{rec}:\Pic^+(\cO)\overset{\simeq}{\longrightarrow}\Gal(H/K) \]
be the isomorphism induced by the reciprocity map of global class field theory.

For the purposes of the present paper, we formulate our rationality conjecture only for Darmon points on $J^{(\ell)}_\pm$, but completely analogous statements could be given for points on $J_0^D(M\ell)^{\text{$\ell$-{\rm new}}}$ as well.

\begin{conjecture} \label{SH-conjecture}
If $\psi\in\Emb(\cO,R)$ then $P_\psi^\pm\in J^{(\ell)}_\pm(H)$ and
\[ P_{\psi^{\fr a}}^\pm=\mathrm{rec}(\fr a)^{-1}\bigl(P_\psi^\pm\bigr) \]
for all $\fr a\in\Pic^+(\cO)$.
\end{conjecture}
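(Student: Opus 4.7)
The plan is to imitate and adapt the strategy used by Bertolini--Darmon in \cite{BD2} for Stark--Heegner points on elliptic curves, transferring it to the Jacobian setting by working with the abelian varieties $J^{(\ell)}_\pm$ whose existence is granted by Conjecture \ref{rigid-analytic-conjecture}. The guiding idea is that, although $P_\psi^\pm$ is defined via $\ell$-adic multiplicative integration and is manifestly only a $K_\ell$-point, one can hope to test its conjectural global rationality and Galois-equivariance by comparing it, modulo many auxiliary primes $p$, with classical Heegner points on Shimura curves attached to definite quaternion algebras, where the analogous reciprocity law is a theorem via CM theory.

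More concretely, first I would fix $\psi\in\Emb(\cO,R)$, the corresponding $\gamma_\psi=\psi(\varepsilon)$ and $z_\psi\in K_\ell-\Q_\ell$, and study the dependence of $\mathcal P_\psi$ on the Galois action through the combinatorics of the action of $\Pic^+(\cO)$ on the set of oriented optimal embeddings (as recalled in Section \ref{Popa-section} via Proposition \ref{emb-bijection-prop}). The cocycle $\mathcal Y\mapsto\mu^{\mathcal Y}$ of Definition \ref{mu} transforms in an explicit way under conjugation of $\psi$ by elements of $B^\times$, and the identity \eqref{eq-prop2.4} already shows the invariance under $\Gamma_0^D(M)$-conjugation; the point is to compute the effect of adelic conjugation representing a class $\fr a\in\Pic^+(\cO)$, obtaining a formal identity of the shape $\mathcal P_{\psi^{\fr a}}=\sigma_{\fr a}(\mathcal P_\psi)$, where $\sigma_{\fr a}$ is expressed through an action on measures coming from the $\Gamma_\ell$-conjugation, and then match this $\sigma_{\fr a}$ with $\mathrm{rec}(\fr a)^{-1}$ on $J_\pm^{(\ell)}(K_\ell)\hookrightarrow J_\pm^{(\ell)}(\bar\Q_\ell)$. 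Conjecturally this matching is forced by the Hecke-equivariance of the isomorphism \eqref{rigid-isom-eq} together with the Galois action of $\Gal(K_\ell/\Q_\ell)$ by $U_\ell$ predicted after the statement of Conjecture \ref{rigid-analytic-conjecture}.

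To go from this formal identity to the true algebraicity statement, I would exploit the level-raising/reciprocity machinery that will be developed in Sections \ref{raising-sec} and \ref{formula-sec}. Namely, for any prime $p$ satisfying Assumption \ref{ass} and any $p$-admissible prime $\ell'\neq\ell$, the formula of Theorem \ref{rec-intro-thm} computes (the reduction modulo $p$ of) the image of a twisted sum of Darmon points under $\partial_{\ell'}$ in terms of the algebraic quantity $\mathcal L_K(E,\chi,1)$; by varying $\chi\in\widehat{G}_c$, one obtains enough ``$p$-adic fingerprints'' of the $\Pic^+(\cO)$-action to pin down the conjugates of $P_\psi^\pm$ modulo $p$ in the $p$-torsion of $J_\pm^{(\ell)}$ as elements of the expected Galois module. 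Combining this with the genus field rationality result of \cite{LV2} (which yields unconditional rationality after pairing with genus characters) and patching together the information obtained as $p$ and $\ell'$ range over a large enough set of primes should allow one to reconstruct $P_\psi^\pm$ as a genuine point in $J_\pm^{(\ell)}(H)$ together with the Galois action predicted by the reciprocity formula.

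The hard part, and the reason the statement is formulated as a conjecture rather than a theorem, is the absence of any genuine CM interpretation for the point $\mathcal P_\psi$: unlike in the imaginary quadratic setting, there is no moduli-theoretic description of $P_\psi^\pm$ guaranteeing a priori that it is algebraic. The congruence-based program above only controls images of $P_\psi^\pm$ modulo $p$ for primes $p$ excluding Assumption \ref{ass}, so turning this into a global rationality statement would require proving a suitable local-global principle for $J_\pm^{(\ell)}(\bar{\mathbb Q})$ together with an independence-of-$\ell'$ statement for the auxiliary admissible primes. Making Conjecture \ref{rigid-analytic-conjecture} unconditional, and in particular realizing $J_\pm^{(\ell)}$ as a genuine $\ell$-new factor of $J_0^D(M\ell)$ over $\Q$ equipped with a Galois action of $\Gal(K_\ell/\Q_\ell)$ given by $U_\ell$, is the unavoidable preliminary step; once that is done, the reciprocity statement itself should follow from the comparison strategy sketched above combined with refinements of \cite{BD2} and \cite{LV2}.
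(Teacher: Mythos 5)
This statement is labeled \emph{Conjecture} \ref{SH-conjecture} and the paper offers no proof of it: it is presented as the quaternionic analogue of \cite[Conjecture 1.7]{BDD} and \cite[Conjecture 3.9]{Das}, and the paper's only ``evidence'' for it consists of the reference to \cite{LV2}, where a genus-character-weighted linear combination of Darmon points on elliptic curves is shown to be rational over genus fields. There is therefore no proof in the paper to compare your attempt against; you seem to be aware of this, since the final paragraph of your proposal correctly explains why the statement is out of reach, but you nonetheless frame your text as a ``proof proposal,'' which is a category error worth flagging.

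Beyond this, one step in your sketch has a concrete logical flaw. You suggest that the Gross--Zagier type reciprocity law (Theorem \ref{rec-intro-thm}/\ref{rec-law}) could be used to ``pin down the conjugates of $P_\psi^\pm$ modulo $p$ in the $p$-torsion of $J_\pm^{(\ell)}$ as elements of the expected Galois module,'' and that by patching over many $p$ and $\ell'$ one could ``reconstruct $P_\psi^\pm$ as a genuine point in $J_\pm^{(\ell)}(H)$.'' But the reciprocity law is an entirely \emph{local} computation: it evaluates $\partial_\ell(P_\chi^\epsilon)$ where $\partial_\ell$ is a map on $J_\epsilon^{(\ell)}(K_\ell)$ defined via the $\ell$-adic valuation of the rigid-analytic uniformization. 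It says nothing about whether the underlying point lies in $J_\epsilon^{(\ell)}(\bar\Q)$; a local point with the right residue class modulo $p$ for every admissible $p$ need not be algebraic (indeed, every element of a one-dimensional $\F_p$-vector space is ``the right one'' up to scalar). In the paper the implication runs the other way: one \emph{assumes} Conjecture \ref{SH-conjecture} and then uses Theorem \ref{rec-law} to show that the resulting global class $\kappa(\ell)$ has nontrivial singular part, hence bounds the Selmer group. Your sketch inverts this logic, which cannot work. The only genuinely correct part of your heuristic is the first one, about $\Gamma_0^D(M)$-conjugation invariance and the expected compatibility of the isomorphism \eqref{rigid-isom-eq} with the $\Gal(K_\ell/\Q_\ell)$-action via $U_\ell$; but these are consistency checks, not a route to the algebraicity claim itself.
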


This is the analogue of \cite[Conjecture 1.7]{BDD} and is a refinement of \cite[Conjecture 3.9]{Das}, which in turn is the counterpart of \cite[Conjectures 5.6 and 5.9]{Dar1}. 

\section{Algebraic parts of special values and a theorem of Popa} \label{Popa-section}

Let $E_{/\Q}$ be an elliptic curve of conductor $N$ and let $K$ be a real quadratic field as in the introduction; moreover, again with the notation of the introduction, set
\[ D:=\prod_{q\in\Sigma}q\geq1,\qquad M:=N/D. \]
Let $f$ denote the modular form on $\Gamma_0^D(M)$ (well defined up to scalars) associated with $f_0$ by the Jacquet--Langlands correspondence; in particular, if $D=1$ then $f=f_0$. In this section we introduce the algebraic part of the special value at $s=1$ of the $L$-function
\[ L_K(E,\chi,s)=L_K(f_0,\chi,s)=L_K(f,\chi,s) \]
and describe some consequences of a formula proved by Popa in \cite{Po}.

\subsection{Review of the group structure of $\Pic^+(\cl O_c)$} \label{Pic-subsec}

Recall the notation of the introduction; in particular, let $c\geq1$ be an integer prime to $\delta_KN$. As before, the reciprocity map of global class field theory provides a canonical isomorphism
\[ \text{rec}:\Pic^+(\cl O_c)\overset{\simeq}{\longrightarrow}G_c \]
where $G_c$ is the Galois group over $K$ of the narrow ring class field of $K$ of conductor $c$. Let now $\Pic(\cl O_c)$ be the Picard group of $\cl O_c$, that is the group of homothety classes of proper $\cO_c$-ideals of $K$; class field theory then identifies $\Pic(\cO_c)$ with the Galois group over $K$ of the (weak) ring class field $K_c$ of $K$ of conductor $c$. It turns out that if $h(c)$ is the order of $\Pic(\cO_c)$ and $h^+(c)$ is the order of $\Pic^+(\cO_c)$ then $h^+(c)/h(c)=1$ or $2$, so $H_c$ is an extension of $K_c$ of degree at most $2$ (see, e.g., \cite[Ch. 15, \S I]{Cohn}).

Since $(c,\delta_K)=1$ by assumption, the principal ideal $(\sqrt{\delta_K})$ is a proper $\cl O_c$-ideal of $K$, so we can consider its class $\mathfrak D_K$ in $\Pic^+(\cl O_c)$. Of course, $\mathfrak D_K^2=1$, hence $\mathfrak D_K$ is either trivial or of order $2$. Furthermore, there is a short exact sequence
\begin{equation} \label{pic-short-eq}
0\longrightarrow\{1,\mathfrak D_K\}\longrightarrow\Pic^+(\cl O_c)\longrightarrow\Pic(\cl O_c)\longrightarrow0,
\end{equation}
so the natural surjection $\Pic^+(\cl O_c)\twoheadrightarrow\Pic(\cl O_c)$ is an isomorphism (i.e., $h^+(c)=h(c)$) precisely when $\mathfrak D_K$ is trivial. Equivalently, $\Pic^+(\cl O_c)=\Pic(\cl O_c)$ if and only if the order $\cl O_c$ has a unit of norm $-1$. In general, sequence \eqref{pic-short-eq} does not split; in fact, it splits if and only if the integer $\delta_K$ is not a sum of two squares (see \cite[Ch. 14, \S B]{Cohn}).

Now define the Galois element
\[ \sigma_K:=\text{rec}(\mathfrak D_K)\in G_c. \]
It follows that $\sigma_K$ is trivial when $h^+(c)=h(c)$ and has order $2$ otherwise.

The automorphism $\sigma_K$ plays a special role in our considerations because it allows us to introduce, as in \cite{BD-Duke}, a natural notion of parity for characters of $G_c$. As before, write $\widehat G_c$ for the group of complex-valued characters of $G_c$.

\begin{definition} \label{parity-dfn}
A character $\chi\in\widehat G_c$ is said to be \emph{even} (respectively, \emph{odd}) if $\chi(\sigma_K)=1$ (respectively, $\chi(\sigma_K)=-1$).
\end{definition}

Equivalently, a character is even if it factors through $\Gal(K_c/K)$, and is odd otherwise. In particular, if $h^+(c)=h(c)$ then $\sigma_K=1$ and all characters of $G_c$ are even.

\subsection{Oriented optimal embeddings} \label{oriented-subsec}

Equip $R$ and $\cl O_c$ with local orientations at prime numbers dividing $N=DM$, i.e., ring homomorphisms
\[ \fr O_q:R\longrightarrow k_q,\qquad\fr o_q:\cl O_c\longrightarrow k_q \]
for every prime $q|N$ where $k_q$ stands for the finite field with $q$ (respectively, $q^2$) elements if $q|M$ (respectively, $q|D$).

Write $\text{Emb}(K,B)$ for the set of embeddings of $K$ into $B$, which is non-empty because all the primes at which $B$ is ramified are inert in $K$. The group $B^\times$ acts on $\text{Emb}(K,B)$ by conjugation on $B$ and the stabilizer of $\psi\in\text{Emb}(K,B)$ is the (non-split) torus $\psi(K^\times)$. We say that $\psi\in\text{Emb}(K,B)$ is an \emph{oriented optimal embedding of $\cO_c$ into $R$} if $\psi\in\text{Emb}(\cO_c,R)$ and
\[ \fr O_q\circ\psi|_{\cl O_c}=\fr o_q \]
for every prime $q|N$. The set of all such embeddings will be denoted by $\E(\cO_c,R)$, and the cardinality of the set of $\Gamma_0^D(M)$-conjugacy classes of elements of $\E(\cl O_c,R)$ is $h^+(c)$.

Let $\omega_\infty\in R^\times$ be an element of reduced norm $-1$. Note that $\omega_\infty$ normalizes $\Gamma_0^D(M)$; in fact, all such elements lie in a single orbit for the action of $\Gamma_0^D(M)$. For any $\gamma\in B^\times$ set
\begin{equation} \label{gamma-ast-eq}
\gamma^\ast:=\omega_\infty\gamma\omega_\infty^{-1}.
\end{equation}
In particular, $\gamma^\ast\in R$ when $\gamma\in R$. Moreover, if $\psi\in\E(\cl O_c,R)$ then it is immediate to check that
\[ \psi^\ast:=\omega_\infty\psi\omega_\infty^{-1} \]
is in $\E(\cl O_c,R)$ too. By definition, if $\psi(\sqrt{\delta_K})=\gamma$ then $\psi^\ast(\sqrt{\delta_K})=\gamma^\ast$.

\begin{proposition} \label{emb-bijection-prop}
There exists a bijection
\[ F:\E(\cl O_c,R)/\Gamma_0^D(M)\longrightarrow\Pic^+(\cl O_c) \]
such that $F([\psi^\ast])=\mathfrak D_K\cdot F([\psi])$ for all $\psi\in\E(\cl O_c,R)$.
\end{proposition}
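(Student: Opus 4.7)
The plan is to adapt the classical correspondence between $R^\times$-conjugacy classes of oriented optimal embeddings and $\Pic(\cO_c)$ (as in \cite[Ch.~III, \S 5C]{Vi}) to the narrower setting of $\Gamma_0^D(M)$-conjugacy. The price of restricting conjugators to have reduced norm $+1$ (rather than $\pm1$) is precisely to refine $\Pic(\cO_c)$ to the narrow class group $\Pic^+(\cO_c)$, and the involution $\psi \mapsto \psi^*$, implemented by the reduced-norm-$(-1)$ element $\omega_\infty$, should then correspond to the non-trivial element $\mathfrak{D}_K$ in the kernel of $\Pic^+(\cO_c) \twoheadrightarrow \Pic(\cO_c)$.

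First I would fix a base $\psi_0 \in \E(\cO_c,R)$. By Skolem--Noether any $\psi \in \E(\cO_c,R)$ has the form $\psi = b\psi_0 b^{-1}$ for some $b \in B^\times$, unique modulo right multiplication by $\psi_0(K^\times)$. Viewed adelically, optimality forces the local conjugator $b_v$ to normalize $\psi_0(\widehat{\cO}_c)_v$ at every finite place $v$, and the orientation condition at primes dividing $N$ refines this by pinning down the local class of $b_v$ up to $\psi_0((K\otimes\Q_v)^\times)\cdot R_v^\times$. Packaging these local data produces a well-defined element
\[
F([\psi]) \in \widehat{\cO}_c^\times\,K_{\infty,+}^\times \backslash \mathbb{A}_K^\times / K^\times = \Pic^+(\cO_c),
\]
with the positivity at infinity emerging from the norm-$1$ restriction on the conjugators inside $\Gamma_0^D(M)$.

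I would then verify that $F$ is a bijection. Well-definedness on $\Gamma_0^D(M)$-conjugacy classes is immediate from the construction. Surjectivity follows from strong approximation applied to $B_1$ (using that $B$ is indefinite), and injectivity is obtained by reversing the assignment: two embeddings giving the same adelic class have conjugators differing by an element of $\psi_0(\widehat{K}^\times)\widehat{R}_1^\times$, and strong approximation then yields a global element of $\psi_0(K^\times)\Gamma_0^D(M)$ witnessing their $\Gamma_0^D(M)$-conjugacy. A cardinality check ($h^+(c)$ on both sides) provides a useful sanity test.

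For the final equivariance, writing $\psi = b\psi_0 b^{-1}$ gives $\psi^* = (\omega_\infty b)\psi_0(\omega_\infty b)^{-1}$, so the adelic class of the conjugator shifts by left multiplication by $\omega_\infty$. Tracing this shift through the identification with $\Pic^+(\cO_c)$, it amounts to multiplication by the class of any element of $K^\times$ of negative norm. Because $(c,\delta_K)=1$, the element $\sqrt{\delta_K}$ generates a proper $\cO_c$-ideal of norm $-\delta_K < 0$, whose narrow class is exactly $\mathfrak{D}_K$; combined with the description of $\ker(\Pic^+(\cO_c) \twoheadrightarrow \Pic(\cO_c)) = \{1,\mathfrak{D}_K\}$ from \eqref{pic-short-eq}, this forces $F([\psi^*]) = \mathfrak{D}_K\cdot F([\psi])$. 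The hard part will be the careful local bookkeeping at the archimedean places of $K$ in this last step: one must verify that conjugation by a reduced-norm-$(-1)$ element at infinity does not get absorbed into the quotient by $K_{\infty,+}^\times$ but genuinely contributes the non-trivial class $\mathfrak{D}_K$ — precisely what distinguishes $\Pic^+(\cO_c)$ from $\Pic(\cO_c)$.
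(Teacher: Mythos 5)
Your overall plan is the right one, but as written it has two gaps. First, the local assertion that optimality ``forces the local conjugator $b_v$ to normalize $\psi_0(\widehat{\cO}_c)_v$'' is not correct: writing $\psi=b\psi_0 b^{-1}$, optimality says $\psi_0^{-1}(b^{-1}R_v b)=\cO_{c,v}$, which constrains how the conjugated order meets the torus $\psi_0(K_v)$ but is not a normalization condition on $\psi_0(\cO_{c,v})$; so the ``packaging'' of local data into an adelic class would have to be set up differently from what you describe. Second, and more decisively, you explicitly defer the key step --- that conjugation by $\omega_\infty$ contributes exactly the class $\mathfrak{D}_K$ and is not absorbed into the quotient by $K_{\infty,+}^\times$ --- as ``the hard part,'' so the equivariance $F([\psi^\ast])=\mathfrak{D}_K\cdot F([\psi])$ is asserted but not actually established.

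The paper runs the construction in the opposite direction and avoids the adelic archimedean analysis entirely. Starting from the auxiliary $\psi_0$, to each $[\mathfrak{a}]\in\Pic^+(\cO_c)$ it associates the left $R$-ideal $R\psi_0(\mathfrak{a})$, which is principal because $B$ is indefinite; one then chooses a generator $a$ with $n(a)>0$ (possible since $n(R^\times)=\{\pm1\}$, and then unique up to left multiplication by $\Gamma_0^D(M)$) and sets $\psi_{[\mathfrak{a}]}:=a\psi_0 a^{-1}$, whose inverse is the desired $F$. The equivariance then becomes a short algebraic computation: for $\mathfrak{a}=\mathfrak{b}\cdot(\sqrt{\delta_K})$, if $Rb=R\psi_0(\mathfrak{b})$ with $n(b)>0$ then $a=\omega_\infty\,b\,\psi_0(\sqrt{\delta_K})$ is a positive-norm generator of $R\psi_0(\mathfrak{a})$, and since $\psi_0(\sqrt{\delta_K})$ centralizes the image of $\psi_0$, conjugation by $a$ reduces to conjugation by $\omega_\infty b$, giving $\psi_{[\mathfrak{b}]\mathfrak{D}_K}=\psi_{[\mathfrak{b}]}^\ast$. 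That one-line cancellation is precisely what replaces the archimedean bookkeeping you flagged as open; if you want to keep your adelic framing you would need to supply an equivalent of it.
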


\begin{proof} To begin with, the claimed correspondence is not canonical, as $\E(\cl O_c,R)/\Gamma_0^D(M)$ is naturally a torsor under the action of $\Pic^+(\cl O_c)$. In order to describe it, we are thus led to fix an auxiliary optimal embedding $\psi_0\in\E(\cl O_c,R)$. Now we can provide an explicit bijection
\begin{equation} \label{inv-bijection-eq}
\Pic^+(\cl O_c)\lra\E(\cl O_c,R)/\Gamma_0^D(M)
\end{equation}
as follows. Given the class $[\mathfrak a]\in\Pic^+(\cl O_c)$ of an ideal $\mathfrak a$, the set $R\psi_0(\mathfrak a)$ is a left ideal, which is known to be principal because $B$ is indefinite. Since $n(R^\times)=\{\pm1\}$, we may find an element $a\in R$ with reduced norm $n(a)>0$ such that $R\psi_0(\mathfrak a)=Ra$, this $a$ being well defined up to elements in $\Gamma_0^D(M)$. Set
\[ \psi_{[\mathfrak a]}:=a\psi_0 a^{-1}\in\E(\cl O_c,R). \]
It is easy to check that the rule $[\mathfrak a]\mapsto\bigl[\psi_{[\mathfrak a]}\bigr]$ induces a well-defined bijection as in \eqref{inv-bijection-eq}. The inverse of \eqref{inv-bijection-eq} can then be taken to be the searched-for $F$ in the statement of the proposition.

Finally, notice that if $\mathfrak a=\mathfrak b\cdot(\sqrt{\delta_K})$ then we can take
\[ a=\om_\infty\cdot b\cdot\psi_0(\sqrt{d_K}) \]
where $b\in R$ is such that $n(b)>0$ and $R\psi_0(\mathfrak b)=Rb$. Hence
\[ \psi_{[\mathfrak a]}=\bigl(\om_\infty\cdot b\cdot\psi_0(\sqrt{d_K})\bigr)\psi_0\bigl(\psi_0(\sqrt{d_K})^{-1}\cdot b^{-1}\cdot\om_\infty^{-1}\bigr). \]
Since $\psi_0(\sqrt{d_K})\psi_0\psi_0(\sqrt{d_K})^{-1}=\psi_0$ because $\cl O_c$ is a commutative ring, we conclude that
\[ \psi_{[\mathfrak b]\mathfrak D_K}=\psi_{[\mathfrak b]}^\ast. \]
Thus
\[ F([\psi^\ast])=\mathfrak D_K\cdot F([\psi]) \]
for all $\psi\in\E(\cl O_c,R)$, as was to be shown. \end{proof}

We choose once and for all an optimal embedding $\psi_0\in\E(\cl O_c,R)$ and regard the bijection $F$ of Proposition \ref{emb-bijection-prop}, built out of $\psi_0$, as fixed. Notice that, by this proposition, $[\psi^\ast]=[\psi]$ if and only if $h^+(c)=h(c)$. Observe also that this is the case precisely when $\om_\infty$ can be taken to lie in $\cl O_c$. Consider the composition
\[ G:=\text{rec}\circ F:\E(\cl O_c,R)/\Gamma_0^D(M)\longrightarrow G_c, \]
which is a bijection satisfying
\begin{equation} \label{G-ast-eq}
G([\psi^\ast])=\sigma_K\cdot G([\psi])
\end{equation}
for all $\psi\in\E(\cl O_c,R)$. Now for every $\sigma\in G_c$ choose an embedding
\[ \psi_\sigma\in G^{-1}(\sigma), \]
so that the family $\{\psi_\sigma\}_{\sigma\in G_c}$ is a set of representatives of the $\Gamma_0^D(M)$-conjugacy classes of oriented optimal embeddings of $\cl O_c$ into $R$. If $\gamma,\gamma'\in R$ write $\gamma\sim\gamma'$ to indicate that $\gamma$ and $\gamma'$ are in the same $\Gamma_0^D(M)$-conjugacy class, and adopt a
similar notation for (oriented) optimal embeddings of $\cl O_c$ into $R$. Since
\[ G([\psi_\sigma^\ast])=\sigma_K\cdot G([\psi_\sigma])=\sigma_K\sigma \]
by equality \eqref{G-ast-eq}, we deduce that
\begin{equation} \label{psi-ast-equiv-eq}
\psi_\sigma^\ast\sim\psi_{\sigma_K\sigma}
\end{equation}
for all $\sigma\in G_c$.

After choosing a (fundamental) unit $\varepsilon_c$ of $\cl O_c$ of norm $1$, normalized so that $\varepsilon_c>1$ with respect to the fixed real embedding of $K$, define
\begin{equation} \label{gamma-sigma-eq}
\gamma_\sigma:=\psi_\sigma(\varepsilon_c)\in\Gamma_0^D(M)
\end{equation}
for all $\sigma\in G_c$. As an immediate consequence of \eqref{psi-ast-equiv-eq} and \eqref{gamma-sigma-eq}, one has
\begin{equation} \label{gamma-ast-equiv-eq}
\gamma_\sigma^\ast=\psi_\sigma^\ast(\varepsilon_c)\sim\psi_{\sigma_K\sigma}(\varepsilon_c)=\gamma_{\sigma_K\sigma}
\end{equation}
for all $\sigma\in G_c$. This seemingly innocuous conjugacy relation will play a crucial role in the proof of Proposition \ref{eigenspace-prop}.

\subsection{Homology of Shimura curves and complex conjugation}\label{homology-subsec}

Let $\T_M=\T_M^D$ be the algebra of Hecke operators acting on cusp forms of weight $2$ on $\Gamma_0^D(M)$, which is generated over $\Z$ by the Hecke operators $T_\ell$ for primes $\ell\nmid DM$ and $U_q$ for primes $q|M$. The algebra $\T_M$ acts naturally on the (singular) homology group $H_1\bigl(X_0^D(M),\Z\bigr)$. As before, let $a_\ell\in\Z$ be the eigenvalue of $f$ for the action of the Hecke operator $T_\ell$ (respectively, $U_\ell$) if $\ell\nmid M$ (respectively, if $\ell|M$). Set
\[ I_f:=\big\langle T_\ell-a_\ell,\;\ell\nmid D M;\;U_q-a_q,\;q|M\big\rangle\subset\T_M, \]
so that $I_f$ is the kernel of the algebra homomorphism
\begin{equation} \label{kerf}
\varphi_f:\T_M\longrightarrow\Z,\qquad T_\ell\longmapsto a_\ell,\qquad U_q\longmapsto a_q
\end{equation}
determined by $f$. As a piece of notation, for any $\T_M$-module $A$ write $A_f:=A/I_fA$ for the maximal quotient of $A$ on which $\T_M$ acts via $\varphi_f$.

We want to embed $X_0^D(M)$ into its Jacobian. If $D=1$ then let
\begin{equation} \label{param-1-eq}
\zeta:X_0(M)\longrightarrow J_0(M)
\end{equation}
be the usual map sending the cusp $\infty$ on $X_0(M)$ to the origin of $J_0(M)$.

If $D>1$ then, following \cite{zh1}, let the \emph{Hodge class} be the unique $\xi\in\Pic(X_0^D(M))\otimes\Q$ of degree $1$ on which the Hecke operators at primes not dividing $M$ act as multiplication by their degree (see \cite[p. 30]{zh1} for an explicit expression of $\xi$ and \cite[\S 3.5]{CV} for a detailed exposition). Writing $J_0^D(M)$ for the Jacobian variety of $X_0^D(M)$, one can define a map
\[ X_0^D(M)\lra J_0^D(M)\otimes\Q \]
by sending a point $x\in X_0^D(M)$ to the class $[x]-\xi$. Multiplying this map by a suitable integer $m\gg0$ gives a finite embedding
\begin{equation} \label{param-D-eq}
\zeta:X_0^D(M)\lra J_0^D(M)
\end{equation}
defined over $\Q$ (cf. \cite[\S 3.5]{CV}), which we fix once and for all.

Choose a parametrization
\[ J_0^D(M)\longrightarrow E \]
defined over $\Q$, whose existence is guaranteed by the modularity of $E$ and (when $D>1$) the Jacquet--Langlands correspondence. Denote by
\[ \pi_E:X_0^D(M)\longrightarrow E \]
the surjective morphism over $\Q$ obtained by pre-composing the parametrization above with the map $\zeta$ defined either in \eqref{param-1-eq} or in \eqref{param-D-eq}. Let now $d_E$ be the degree of $\pi_E$, and if $T$ is a finite set of prime numbers write $\Z_T$ for the localization of $\Z$ in which the primes in $T$ are inverted. Throughout this article we fix a (minimal) finite set of primes $S$ such that
\begin{itemize}
\item all prime divisors of $6d_E$ belong to $S$;
\item the $\Z_S$-module $H_1\bigl(X_0^D(M),\Z_S\bigr)_f$ is torsion-free.
\end{itemize}
The universal coefficient theorem for homology ensures that this can actually be done. Then push-forward gives an isomorphism
\begin{equation} \label{map1-eq}
\pi_{E,\ast}:H_1\bigl(X_0^D(M),\Z_S\bigr)_f\overset{\simeq}{\longrightarrow}H_1(E,\Z_S).
\end{equation}
\begin{remarkwr} \label{size-S-rem}
Although -- in order to make our choice somewhat more canonical -- the set $S$ is taken to be minimal, enlarging $S$ does not affect the above two properties, and so all statements proved remain valid when $S$ is replaced by any set containing it. This freedom of modifying the size of $S$ will be exploited in the proof of Theorem \ref{cong-thm}.
\end{remarkwr}

Let $\cl H$ be the complex upper half-plane and let $\Pi:\cl H\rightarrow X_0^D(M)$ be the canonical map. For every point $z_0\in\cl H$ there is a group homomorphism
\begin{equation} \label{map2-eq}
\Gamma_0^D(M)\longrightarrow\pi_1\bigl(X_0^D(M),\Pi(z_0)\bigr)
\end{equation}
defined by the following recipe: if $\gamma\in\Gamma_0^D(M)$ and $\alpha:[0,1]\rightarrow\cl H$ is a path from $z_0$ to $\gamma(z_0)$ then the map \eqref{map2-eq} sends $\gamma$ to the (strict) homotopy class of the loop $\Pi\circ\alpha$ around $\Pi(z_0)$. Since $\cl H$ is simply connected, this class does not depend on the choice of $\alpha$.

By Hurewicz's theorem, the abelianization of $\pi_1\bigl(X_0^D(M),\Pi(z_0)\bigr)$ is canonically isomorphic to $H_1\bigl(X_0^D(M),\Z\bigr)$, hence there is a group homomorphism
\[ [\,\cdot\,]:\Gamma_0^D(M)\longrightarrow H_1\bigl(X_0^D(M),\Z_S\bigr) \]
which is independent of the choice of the base point $z_0$ in $\cl H$.

Recall the elements $\gamma_\sigma\in\Gamma_0^D(M)$ with $\sigma\in G_c$ that were introduced in \S \ref{oriented-subsec}. Since the group $H_1\bigl(X_0^D(M),\Z_S\bigr)$ is abelian, for each $\sigma\in G_c$ the homology class $[\gamma_\sigma]$ does not depend on the representative $\psi_\sigma$ of the $\Gamma_0^D(M)$-conjugacy class of (oriented) optimal embeddings in terms of which $\gamma_\sigma$ was defined (cf. equation \eqref{gamma-sigma-eq}).

Let now $\varepsilon\in R^\times$ be a unit of norm $-1$ and let $\tau$ denote the involution on $\cl H$ given by $z\mapsto\varepsilon(\bar z)$ where $\bar z$ is the conjugate of the complex number $z$. Since $\Gamma_0^D(M)$ is a normal subgroup of $R^\times$, the map $\tau$ descends to an involution on $X_0^D(M)$ by the formula
\begin{equation} \label{tau-eq}
\Pi(z)^\tau=\Pi\bigl(\varepsilon(\bar z)\bigr)
\end{equation}
for all $z\in\cl H$; according to Shimura, this action does not depend on the choice of an $\varepsilon$ as above and coincides with the natural action of complex conjugation on the Riemann surface $X_0^D(M)$ (\cite[Proposition 1.3]{Sh}).

The rule \eqref{tau-eq} induces an action of $\tau$ on the homology of $X_0^D(M)$. With notation as in \eqref{gamma-ast-eq}, by definition of the homomorphism $[\,\cdot\,]$, for all $\gamma\in\Gamma_0^D(M)$ one has
\begin{equation} \label{gamma-tau-eq}
[\gamma]^\tau=[\gamma^\ast]
\end{equation}
in $H_1\bigl(X_0^D(M),\Z_S\bigr)$. The involution $\tau$ restricts to a permutation of the subset $\bigl\{[\gamma_\sigma]\bigr\}_{\sigma\in G_c}$; the understanding of this permutation provided by equation \eqref{gamma-ast-equiv-eq} will be crucial for our definition of the algebraic part of $L_K(E,\chi,1)$.

\subsection{The algebraic part} \label{alg-part-subsec}

Here we introduce the algebraic part of the special value of $L_K(E,\chi,s)$ at the critical point $s=1$. Set
\[ I_\chi:=\sum_{\sigma\in G_c}\chi^{-1}(\sigma)[\gamma_\sigma]\in H_1\bigl(X_0^D(M),\Z[\chi]_S\bigr). \]
Since the $[\gamma_\sigma]$ do not depend on $z_0$ in $\cl H$, the cycle $I_\chi$ is independent of $z_0$. 

The next result says that $\tau$ acts either as $+1$ or as $-1$ on $I_\chi$ according to the parity of $\chi$ that was introduced in Definition \ref{parity-dfn}.

\begin{proposition} \label{eigenspace-prop}
The cycle $I_\chi$ lies in the $+1$-eigenspace (respectively, $-1$-eigenspace) for $\tau$ if $\chi$ is even (respectively, odd).
\end{proposition}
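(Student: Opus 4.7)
The plan is to simply unwind the definition of $I_\chi$, apply the involution $\tau$ term by term, and then relabel the sum using the key conjugacy relation \eqref{gamma-ast-equiv-eq} together with the definition of parity.

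First I would observe that since $H_1\bigl(X_0^D(M),\Z_S\bigr)$ is abelian, the homomorphism $[\,\cdot\,]:\Gamma_0^D(M)\to H_1\bigl(X_0^D(M),\Z_S\bigr)$ factors through the abelianization of $\Gamma_0^D(M)$, so $[\gamma]=[\gamma']$ whenever $\gamma\sim\gamma'$. Applying this to \eqref{gamma-ast-equiv-eq} gives
\[
[\gamma_\sigma^\ast]=[\gamma_{\sigma_K\sigma}]\qquad\text{for all }\sigma\in G_c.
\]
Combined with \eqref{gamma-tau-eq}, which asserts $[\gamma]^\tau=[\gamma^\ast]$, this yields $[\gamma_\sigma]^\tau=[\gamma_{\sigma_K\sigma}]$.

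Next I would compute
\[
I_\chi^\tau=\sum_{\sigma\in G_c}\chi^{-1}(\sigma)[\gamma_\sigma]^\tau=\sum_{\sigma\in G_c}\chi^{-1}(\sigma)[\gamma_{\sigma_K\sigma}].
\]
Performing the change of summation variable $\sigma'=\sigma_K\sigma$ (so $\sigma=\sigma_K\sigma'$, since $\sigma_K^2=1$) gives
\[
I_\chi^\tau=\sum_{\sigma'\in G_c}\chi^{-1}(\sigma_K\sigma')[\gamma_{\sigma'}]=\chi^{-1}(\sigma_K)\sum_{\sigma'\in G_c}\chi^{-1}(\sigma')[\gamma_{\sigma'}]=\chi(\sigma_K)\cdot I_\chi,
\]
where in the last step I use $\chi(\sigma_K)\in\{\pm1\}$, so $\chi^{-1}(\sigma_K)=\chi(\sigma_K)$.

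By Definition \ref{parity-dfn}, $\chi(\sigma_K)=+1$ exactly when $\chi$ is even and $-1$ exactly when $\chi$ is odd, which is precisely the claim. There is no genuine obstacle here beyond making sure that the two facts in play really apply: (i) passage from $\Gamma_0^D(M)$-conjugacy of group elements to equality of homology classes (which is automatic from abelianness of $H_1$), and (ii) the conjugacy relation $\gamma_\sigma^\ast\sim\gamma_{\sigma_K\sigma}$, which was established in \S\ref{oriented-subsec} precisely with this application in mind.
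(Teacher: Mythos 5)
Your proof is correct and follows essentially the same route as the paper: apply $\tau$ termwise via \eqref{gamma-tau-eq} and \eqref{gamma-ast-equiv-eq}, then reindex by $\sigma\mapsto\sigma_K\sigma$; the only cosmetic difference is that the paper factors out $\chi(\sigma_K)$ directly inside the sum, while you pull out $\chi^{-1}(\sigma_K)$ after the change of variables and then use $\chi(\sigma_K)^2=1$, which amounts to the same thing.
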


\begin{proof} Thanks to equality \eqref{gamma-tau-eq} and the conjugacy relation of equation \eqref{gamma-ast-equiv-eq}, one has
\[ \begin{split}
   I_\chi^\tau&=\sum_{\sigma\in G_c}\chi^{-1}(\sigma)[\gamma_\sigma]^\tau=\sum_{\sigma\in G_c}\chi^{-1}(\sigma)[\gamma_\sigma^\ast]=\sum_{\sigma\in G_c}\chi^{-1}(\sigma)[\gamma_{\sigma_K\sigma}]\\[2mm]
   &=\chi(\sigma_K)\cdot\bigg(\sum_{\sigma\in G_c}\chi^{-1}(\sigma_K\sigma)[\gamma_{\sigma_K\sigma}]\bigg)=\chi(\sigma_K)\cdot\bigg(\sum_{\varsigma\in G_c}\chi^{-1}(\varsigma)[\gamma_\varsigma]\bigg)\\[2mm]
   &=\chi(\sigma_K)I_\chi,
   \end{split} \]
whence the claim. \end{proof}

Consider the push-forward
\[ I_{\chi,E}:=\pi_{E,\ast}(I_\chi)\in H_1(E,\Z[\chi]_S), \]
write $H_1\bigl(X_0^D(M),\Z[\chi]_S\bigr)^\pm$ for the eigenspace of $H_1\bigl(X_0^D(M),\Z[\chi]_S\bigr)$ on which the involution $\tau$ acts as multiplication by $\pm1$, and adopt a similar convention for $H_1(E,\Z[\chi]_S)$. Since the morphism $\pi_E$ is defined over $\Q$, one has
\[ I_\chi\in H_1\bigl(X_0^D(M),\Z[\chi]_S\bigr)^\epsilon\;\Longrightarrow\;I_{\chi,E}\in H_1(E,\Z[\chi]_S)^\epsilon \]
for $\epsilon\in\{+,-\}$. The reader is suggested to compare our homology cycle $I_{\chi,E}$ with the twisted sum of period integrals $I(f,\chi)$ introduced in \cite[p. 191]{BD-Duke}.

Keeping in mind that $H_1(E,\Z)$ identifies with the lattice of periods associated with a Weierstrass equation for $E$, it can be checked that both $H_1(E,\Z[\chi]_S)^+$ and $H_1(E,\Z[\chi]_S)^-$ are free of rank $1$ over $\Z[\chi]_S$; here we fix canonical generators $\alpha_E^+$ and $\alpha_E^-$ of these two eigenspaces
over $\Z[\chi]_S$ as described in \cite[\S 2.2]{MaSw}.

Now suppose that $I_\chi\in H_1\bigl(X_0^D(M),\Z[\chi]_S\bigr)^\epsilon$ with $\epsilon\in\{+,-\}$: by Proposition \ref{eigenspace-prop}, the nature of $\epsilon$ depends on the parity of $\chi$. Let $\cl L_K(E,\chi,1)_S$ be the unique element of $\Z[\chi]_S$ such that the equality
\begin{equation} \label{alg-part-eq}
I_{\chi,E}=\cl L_K(E,\chi,1)_S\cdot\alpha_E^\epsilon
\end{equation}
holds in $H_1(E,\Z[\chi]_S)$.

\begin{definition} \label{alg-part-dfn}
The element $\cl L_K(E,\chi,1)_S\in\Z[\chi]_S$ appearing in \eqref{alg-part-eq} is the \emph{algebraic part} of $L_K(E,\chi,1)$.
\end{definition}

Since the finite set $S$ has been fixed once and for all, from here on we drop the dependence of the algebraic part of $L_K(E,\chi,1)$ on $S$ from the notation and simply write $\cl L_K(E,\chi,1)$ in place of $\cl L_K(E,\chi,1)_S$.

Before we proceed to crucial considerations on the vanishing of $L_K(E,\chi,1)$, a few comments are in order.
\begin{remark}
By construction, $I_\chi$ naturally belongs to the submodule $H_1\bigl(X_0^D(M),\Z[\chi]\bigr)$. In fact, as in \cite{BDD}, the need to localize at $S$ will become evident only later, but for clarity of exposition we decided to introduce the required formalism at the outset of our work.
\end{remark}
\begin{remark}
The definition of the algebraic part of the special value $L_K(E,\chi,1)$ given by Bertolini, Darmon and Dasgupta in \cite{BDD} is slightly different. In fact, $\cl L_K(E,\chi,1)$ is defined in \cite[Section 2]{BDD} to be the natural image of $I_\chi$ in $H_1\bigl(X_0^D(M),\Z[\chi]_S\bigr)_f$ (note, however, that the authors of \emph{loc. cit.} only consider the classical case of modular curves, with $c=1$ and trivial $\chi$). On the other hand, tensoring the isomorphism in \eqref{map1-eq} with $\Z[\chi]_S$ over $\Z_S$ shows that the two definitions of $\cl L_K(E,\chi,1)$ are essentially equivalent.
\end{remark}

\subsection{Vanishing of the special value} \label{popa-subsec}

The goal of this subsection is to prove that the special value of $L_K(E,\chi,s)$ vanishes exactly when its algebraic part does. This is a consequence of a result proved by Popa in \cite[Section 5]{Po} and reformulated in more classical terms in \cite[Section 6]{Po} when $D=1$ and $\chi$ is unramified. In this special case, Popa's computations are based on a very explicit description of a bijection between suitable ideal classes and conjugacy classes of optimal embeddings. While it seems difficult to exhibit such an explicit correspondence when $D>1$, Proposition \ref{emb-bijection-prop} provides sufficient information to allow for a ``classical'' formulation of Popa's theorem in the general setting as well.

The result we are interested in is the following

\begin{theorem}[Popa] \label{popa-thm}
The special value $L_K(E,\chi,1)$ is non-zero if and only if $\cl L_K(E,\chi,1)$ is non-zero.
\end{theorem}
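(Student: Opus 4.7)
The plan is to reduce the statement to the main formula of Popa in \cite[Section 5]{Po}. That formula expresses $L_K(f,\chi,1)$ as a nonzero explicit transcendental factor times the squared absolute value of a twisted sum of period integrals of $f$ indexed by conjugacy classes of oriented optimal embeddings of $\cO_c$ into $R$. After recalling this formula, the proof reduces to identifying that twisted sum with the period of $\omega_f$ along the homology cycle $I_\chi$, and then to transferring the computation from $X_0^D(M)$ to $E$.

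First I would write Popa's formula in the concrete form
\[
L_K(f,\chi,1)=C\cdot\Bigl|\sum_{[\psi]}\chi^{-1}\bigl(F([\psi])\bigr)\int_{z_\psi}^{\gamma_\psi(z_\psi)}2\pi i f(z)\,dz\Bigr|^2,
\]
where $C$ is a nonzero constant depending only on $f$, $K$, $c$ and the archimedean data, and the sum runs over $\Gamma_0^D(M)$-conjugacy classes in $\E(\cO_c,R)$. Here Proposition \ref{emb-bijection-prop} is essential: it supplies the bijection $F:\E(\cO_c,R)/\Gamma_0^D(M)\to\Pic^+(\cO_c)$ needed to make the twist by $\chi^{-1}$ meaningful in our classical setting (Popa's original statement is adelic). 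Choosing the representatives $\psi_\sigma\in G^{-1}(\sigma)$ of \S\ref{oriented-subsec}, the sum becomes $\sum_{\sigma\in G_c}\chi^{-1}(\sigma)\int_{z_{\psi_\sigma}}^{\gamma_\sigma(z_{\psi_\sigma})}2\pi i f(z)\,dz$.

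Next I would rewrite this twisted sum intrinsically in terms of homology. By definition of the map $[\,\cdot\,]:\Gamma_0^D(M)\to H_1(X_0^D(M),\Z)$ reviewed in \S\ref{homology-subsec}, the loop on $X_0^D(M)$ obtained by projecting to $X_0^D(M)$ a path in $\mathcal H$ from $z_{\psi_\sigma}$ to $\gamma_\sigma(z_{\psi_\sigma})$ represents the class $[\gamma_\sigma]$. Consequently Popa's sum equals the integration pairing $\langle\omega_f,I_\chi\rangle$, where $\omega_f=2\pi i f(z)dz$ and $I_\chi\in H_1(X_0^D(M),\Z[\chi])$ is the cycle of \S\ref{alg-part-subsec}. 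Descending to $E$ via $\pi_E$, the pullback $\pi_E^\ast\omega_E$ of a Néron differential on $E$ is a nonzero rational multiple of $\omega_f$ with denominator supported in $S$ (since $6d_E$ divides the elements of $S$), hence up to a unit of $\Z[\chi]_S$
\[
\langle\omega_f,I_\chi\rangle=\langle\omega_E,\pi_{E,\ast}I_\chi\rangle=\langle\omega_E,I_{\chi,E}\rangle=\cl L_K(E,\chi,1)\cdot\Omega_E^\epsilon,
\]
where $\epsilon\in\{+,-\}$ is the parity of $\chi$ dictated by Proposition \ref{eigenspace-prop} and $\Omega_E^\epsilon:=\langle\omega_E,\alpha_E^\epsilon\rangle$ is the corresponding (nonzero) real or purely imaginary period of $E$. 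Combining the three equalities produces an identity of the form $L_K(E,\chi,1)=C'\cdot\cl L_K(E,\chi,1)^2$ with $C'\neq0$, from which the theorem follows immediately.

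The main obstacle will be the faithful translation between Popa's adelic formulation and the classical picture of the present paper, in particular the matching of his indexing of conjugacy classes of oriented optimal embeddings with ours, and the verification that the twist in his formula corresponds to our $\chi$ under the bijection $F\circ\text{rec}^{-1}$. This is precisely where Proposition \ref{emb-bijection-prop}, the orientation conventions of \S\ref{oriented-subsec}, and the parity identification of Proposition \ref{eigenspace-prop} are needed; once these compatibilities have been spelled out, the argument is a direct unwinding of Popa's formula along the lines sketched above.
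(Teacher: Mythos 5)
Your argument follows essentially the same route as the paper's proof: start from Popa's formula expressing $L_K(f,\chi,1)$ as a nonzero constant times the square of the absolute value of a twisted sum of period integrals indexed by $\Gamma_0^D(M)$-conjugacy classes of oriented optimal embeddings, use Proposition~\ref{emb-bijection-prop} to match that indexing with $G_c$, recognize the sum as $\int_{I_\chi}\omega_f$, push forward to $E$ via $\pi_E$ (comparing $\omega_f$ and $\pi_E^\ast\omega_E$), and read off the nonvanishing equivalence. One small correction: since Popa's formula involves $|l(\phi_f)|^2$, the identity you get should read $L_K(E,\chi,1)=C'\cdot|\cl L_K(E,\chi,1)|^2$ rather than $C'\cdot\cl L_K(E,\chi,1)^2$ (these need not agree since $\cl L_K(E,\chi,1)\in\Z[\chi]_S$ is generally not real); also the relation $\pi_E^\ast\omega_E=c(\pi_E)\omega_f$ has $c(\pi_E)\in\C^\times$ (cf.~\cite[Theorem 5.6]{zh2}) rather than a unit of $\Z[\chi]_S$, but for this theorem only the nonvanishing of $c(\pi_E)$ is used. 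Neither point affects the conclusion.
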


\begin{proof} As already remarked, this is a consequence of the formula for $L_K(E,\chi,1)$ proved by Popa in \cite{Po}. Since the results of Popa are expressed in the adelic language of automorphic representations, we explain how to deduce the theorem in the formulation that is convenient for our purposes. In fact, in equality \eqref{popa-eq5} we give an explicit formula for $L_K(E,\chi,1)$ when the character $\chi$ is not necessarily trivial; in doing this, we freely use the notation of \cite{Po}.

First of all, observe that, due to the normalization commonly adopted in automorphic-theoretic contexts (cf. \cite[\S 5.14]{IK}), the special value of $L_K(E,\chi,s)$ at $s=1$ corresponds to $L(1/2,\pi_f\times\pi_\chi)$ in \cite{Po}. As recalled in \S \ref{oriented-subsec}, the $\Gamma_0^D(M)$-conjugacy classes of oriented optimal embeddings of $\cl O_c$ into $R$ are in bijection with the elements of the Galois group $G_c$. With arguments analogous to those exposed in \cite[Section 6]{Po}, if $\omega_f:=2\pi if(z)dz$ is the differential on $X_0^D(M)$ associated with $f$ one then obtains an equality
\begin{equation} \label{popa-eq1}
|l(\phi_f)|^2=\Bigg|\sum_{\sigma\in G_c}\chi^{-1}(\sigma)\int_{z_0}^{\gamma_\sigma(z_0)}f(z)dz\Bigg|^2=\bigg|\int_{I_\chi}\omega_f\bigg|^2
\end{equation}
where $l$ is a certain linear form on a suitable space of automorphic forms (see \cite[p. 852]{Po}) and $\phi_f$ is the automorphic form attached to $f$ as in \cite[Proposition 5.3.6]{Po}. Equality \eqref{popa-eq1} is the analogue (with $k=1$) of the formula given, in the split case, in \cite[p. 862]{Po} for an unramified $\chi$ (in this setting, see also \cite[Theorem 6.3.1]{Po}, which provides a formulation of Popa's results suitable for the arithmetic applications of \cite{BDD}). Now \cite[Theorem 5.3.9]{Po} with $k=1$ asserts that there is a non-zero constant $\Omega$ (denoted by $C$ in \emph{loc. cit.}) such that
\begin{equation} \label{popa-eq2}
L_K(E,\chi,1)=\frac{\Omega Nc^2}{\sqrt{\delta_K}}\prod_{\ell|Nc}\Big(1+\frac{1}{\ell}\Big)|l(\phi_f)|^2;
\end{equation}
the explicit expression of $\Omega$ in the case where $c=1$ can be found in \cite[\S 5.4]{Po}.

Combining equations \eqref{popa-eq1} and \eqref{popa-eq2} yields immediately the formula
\begin{equation} \label{popa-eq3}
L_K(E,\chi,1)=\frac{\Omega Mc^2}{\sqrt{\delta_K}}\prod_{\ell|Mc}\Big(1+\frac{1}{\ell}\Big)\bigg|\int_{I_\chi}\omega_f\bigg|^2,
\end{equation}
and the claim of the theorem follows from \eqref{popa-eq3} by passing to the push-forward
\[ I_{\chi,E}=\pi_{E,\ast}(I_\chi)\in H_1(E,\Z[\chi]_S). \]
Namely, let $\omega_E$ be a N{\'e}ron differential on the N{\'e}ron model of $E$ over $\Z$; by \cite[Theorem 5.6]{zh2}, there is an equality
\[ \pi_E^\ast(\omega_E)=c(\pi_E)\omega_f \]
with $c(\pi_E)\in\C^\times$; then one has
\begin{equation} \label{popa-eq4}
c(\pi_E)\int_{I_\chi}\omega_f=\int_{I_{\chi,E}}\omega_E=\cl L_K(E,\chi,1)\int_{\alpha_E^\epsilon}\omega_E
\end{equation}
where $\epsilon\in\{+,-\}$ and $I_\chi\in H_1\bigl(X_0^D(M),\Z[\chi]_S\bigr)^\epsilon$. Finally, combining \eqref{popa-eq3} and \eqref{popa-eq4} gives the equality
\begin{equation} \label{popa-eq5}
L_K(E,\chi,1)=\bigl|\cl L_K(E,\chi,1)\bigr|^2\cdot\frac{\Omega Mc^2}{|c(\pi_E)|^2\sqrt{\delta_K}}\prod_{\ell|Mc}\Big(1+\frac{1}{\ell}\Big)\bigg|\int_{\alpha_E^\epsilon}\omega_E\bigg|^2,
\end{equation}
and the theorem is proved. \end{proof}

\section{Admissible primes relative to $f$ and $p$} \label{adm}

For any prime number $q$ fix an isomorphism $E[q]\simeq(\Z/q\Z)^2$ by choosing a basis of $E[q]$ over $\Z/q\Z$ and let
\[ \rho_{E,q}:G_\Q\longrightarrow\GL_2(\Z/q\Z) \]
be the representation of $G_\Q$ acting on $E[q]$.

\subsection{Choice of $p$} \label{choice-p-subsec}

Here we introduce the restrictions on the prime numbers $p$ under which we will prove our main results; they are analogous to those made in \cite[Assumption 4.1]{LV}. Before doing this, recall the finite set of primes $S$ of \S \ref{homology-subsec}, the algebraic part $\cl L_K(E,\chi,1)\in\Z[\chi]_S$ introduced in \S \ref{alg-part-subsec} and the prime $r$ appearing in Definition \ref{mu}. Finally, fix an integer $C$ as in Theorem \ref{Ihara-1}.

\begin{ass} \label{ass}
Suppose that $L_K(E,\chi,1)\neq0$. Then
\begin{enumerate}
\item $p\not\in S$;
\item $p\nmid2cNC\delta_Kh^+(c)(r+1-a_r)$;
\item the Galois representation $\rho_{E,p}$ is surjective;
\item the image of $\cl L_K(E,\chi,1)$ in the quotient $\Z[\chi]_S/p\Z[\chi]_S$ is not zero;
\item $p\nmid|E(H_{c,\fr q})_{\text{tors}}|$ where $H_{c,\fr q}$ is the completion of $H_c$ at a prime $\fr q$ dividing $DM$.
\end{enumerate}
\end{ass}

The ``open image theorem'' of Serre (\cite{Se}) ensures that condition $3$ is satisfied for all but finitely many primes $p$, while the torsion subgroup of $E(H_{c,\fr q})$ is finite by a well-known theorem of Lutz (\cite{Lutz}); moreover, condition $4$ excludes only a finite number of primes $p$ since $\cl L_K(E,\chi,1)\not=0$ by Theorem \ref{popa-thm}. As a consequence, Assumption \ref{ass} is fulfilled by almost all prime numbers $p$. Observe that, in order to avoid ambiguities, the condition $L_K(E,\chi,1)\not=0$ will always explicitly appear in the statements of our results.

\begin{remarkwr}
Condition $5$ in Assumption \ref{ass} is introduced in order to ``trivialize'' the image of the local Kummer map at primes of bad reduction for $E$. The reader is referred to, e.g., \cite{GPa} to see how one could impose suitable local conditions at these primes too. We also expect that Assumption \ref{ass} could be relaxed by using the methods recently proposed by Nekov\'a\v{r} in his work on level raising for Hilbert modular forms of weight two (\cite{Ne}), which greatly improves the techniques introduced in \cite{BD1} and then refined in \cite{LV}. 
\end{remarkwr}

\subsection{Admissible primes} \label{admissible-subsec}

Let $p$ be the prime number chosen in \S \ref{choice-p-subsec} and recall the  quaternionic modular form $f$ of weight $2$ on $\Gamma^D_0(M)$ associated with $E$ by the Jacquet--Langlands correspondence. Following \cite[\S 3.3]{BDD} (see also \cite[\S 2]{BD1} and \cite[\S 4.2]{LV} for an analogous definition in the imaginary quadratic setting), we say that a prime number $\ell$ is \emph{admissible relative to $f$ and $p$} (or $p$-\emph{admissible}, or even simply \emph{admissible}) if it satisfies the following conditions:
\begin{enumerate}
\item $\ell\nmid Npc$;
\item the support of $\Gamma_\ell^{\rm ab}$ is contained in the set of prime divisors of $C\ell$;
\item $\ell$ is inert in $K$;
\item $p\nmid\ell^2-1$;
\item $p|(\ell+1)^2-a_\ell^2$.
\end{enumerate}
Note that, thanks to Theorem \ref{Ihara-1}, the first two conditions exclude only a finite number of primes $\ell$. Moreover, as a consequence of condition $2$ in Assumption \ref{ass}, the prime $p$ does not divide the exponent $t_\ell$ of $\Gamma_\ell^{\rm ab}$ for all admissible primes $\ell$.

For every admissible prime $\ell$ choose once and for all a prime $\lambda_0$ of $H_c$ above $\ell$ (we will never deal with more than one admissible prime at the same time, so ignoring the dependence of $\lambda_0$ on $\ell$ should cause no confusion). Since admissible primes are inert in $K$ and do not divide $c$, if $\ell$ is such a prime then $\ell\cl O_K$ splits completely in $H_c$, hence there are exactly $h^+(c)$ primes of $H_c$ above $\ell$. The choice of $\lambda_0$ allows us to fix an explicit bijection between $G_c$ and the set of these primes via the rule
\begin{equation} \label{bijection-eq}
\sigma\in G_c\longmapsto\sigma(\lambda_0).
\end{equation}
The inverse to this bijection will be denoted
\[ \lambda\longmapsto\sigma_\lambda\in G_c, \]
so that $\sigma_\lambda(\lambda_0)=\lambda$. Finally, an element $\sigma\in G_c$ acts on the group rings $\Z[G_c]$ and $\Z/p\Z[G_c]$ in the natural way by multiplication on group-like elements (that is, $\gamma\mapsto\sigma\gamma$ for all $\gamma\in G_c$).

\begin{lemma} \label{local-iso-lemma}
Let $\ell$ be an admissible  prime relative to $f$ and $p$. The local cohomology groups $H^1_{\rm fin}(H_{c,\ell},E[p])$ and $H^1_{\rm sing}(H_{c,\ell},E[p])$ are both isomorphic to $\Z/p\Z[G_c]$ as $\Z[G_c]$-modules.
\end{lemma}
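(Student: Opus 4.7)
My plan would be to reduce the computation to a single prime above $\ell$ and then use the standard local cohomology theory for elliptic curves at primes of good reduction. The argument has four main pieces.

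First, I would unwind the notation $H^1(H_{c,\ell},E[p])$ as the direct sum $\bigoplus_{\lambda\mid\ell}H^1(H_{c,\lambda},E[p])$. As already observed just before the lemma statement, since $\ell$ is admissible it is inert in $K$ and coprime with $c$, so $\ell\cO_K$ splits completely in $H_c/K$ into exactly $h^+(c)=|G_c|$ primes $\{\sigma(\lambda_0)\}_{\sigma\in G_c}$; the $G_c$-action simply permutes them via the regular representation, and the decomposition group of each $\lambda\mid\ell$ in $G_c$ is trivial. Consequently, if I can show that the local finite (resp.\ singular) cohomology group at $\lambda_0$ is isomorphic to $\Z/p\Z$, the $\Z[G_c]$-module structure of the whole direct sum is forced to be the regular representation $\Z/p\Z[G_c]$.

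Next I would compute the local cohomology at $\lambda_0$. By the admissibility conditions $\ell\nmid Npc$, the curve $E$ has good reduction at $\lambda_0$ and $\ell\neq p$, so the decomposition
\[ H^1(H_{c,\lambda_0},E[p])=H^1_{\rm fin}(H_{c,\lambda_0},E[p])\oplus H^1_{\rm sing}(H_{c,\lambda_0},E[p]) \]
is available, with $H^1_{\rm fin}=E(H_{c,\lambda_0})/p$ (Kummer) and, via local Tate duality, $H^1_{\rm sing}$ of the same cardinality. The completion $H_{c,\lambda_0}$ coincides with $K_\ell$, the unramified quadratic extension of $\Q_\ell$, because $\ell$ is inert in $K$ and splits completely in $H_c/K$. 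Since $E$ has good reduction at $\ell$ and $p\neq\ell$, the formal group makes no contribution modulo $p$, so $E(K_\ell)/p\cong \tilde E(\F_{\ell^2})/p$.

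Third, I would count $\tilde E(\F_{\ell^2})$ using Weil. If $\alpha,\beta$ are the eigenvalues of Frobenius at $\ell$ on the Tate module, then $\alpha+\beta=a_\ell$, $\alpha\beta=\ell$, and
\[ |\tilde E(\F_{\ell^2})|=\ell^2+1-(\alpha^2+\beta^2)=\ell^2+1-(a_\ell^2-2\ell)=(\ell+1)^2-a_\ell^2. \]
Admissibility forces $p\mid (\ell+1)^2-a_\ell^2$, so $p$ divides $|\tilde E(\F_{\ell^2})|$; and the remaining admissibility condition $p\nmid \ell^2-1$, combined with the Weil pairing $E[p]\times E[p]\twoheadrightarrow\mu_p$, prevents $\mu_p$ from being contained in $\F_{\ell^2}^\times$ and hence prevents all of $E[p]$ from being rational over $K_\ell$; this rules out $p^2\mid |\tilde E(\F_{\ell^2})|$. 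Therefore $\tilde E(\F_{\ell^2})[p]\cong\Z/p\Z$, giving $H^1_{\rm fin}(H_{c,\lambda_0},E[p])\cong\Z/p\Z$, and $H^1_{\rm sing}$ has the same order by local duality.

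Finally, combining the first step with this local calculation yields both isomorphisms $H^1_{\rm fin}(H_{c,\ell},E[p])\cong\Z/p\Z[G_c]$ and $H^1_{\rm sing}(H_{c,\ell},E[p])\cong\Z/p\Z[G_c]$ of $\Z[G_c]$-modules. The main (minor) obstacle is pinning down the $G_c$-equivariance: one must check that $G_c$ acts on the direct sum by permuting summands exactly as it permutes the primes $\{\sigma(\lambda_0)\}$ via the bijection in \eqref{bijection-eq}; this is a formal consequence of the fact that cohomology at a local place transports covariantly under a Galois translation between equivalent completions, and that each local group has been identified canonically with $\Z/p\Z$.
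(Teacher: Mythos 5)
Your argument follows essentially the same route as the paper, which cites \cite[Lemma 2.6]{BD1} for the local computation $H^1_\star(K_\ell,E[p])\cong\Z/p\Z$ (using $p\nmid\ell^2-1$) and then invokes the complete splitting of $\ell$ in $H_c$ together with the bijection \eqref{bijection-eq} to get the $\Z/p\Z[G_c]$-structure; you simply unpack the details of that citation via the Weil count on $\tilde E(\F_{\ell^2})$. One small inaccuracy worth flagging: the admissibility conditions do not actually preclude $p^2\mid|\tilde E(\F_{\ell^2})|$ (for instance $p^2$ could divide $\ell+1-a_\ell$), but what your Weil-pairing argument correctly establishes is $\tilde E(\F_{\ell^2})[p]\cong\Z/p\Z$, and since the group is finite this already gives $\tilde E(\F_{\ell^2})/p\cong\Z/p\Z$, which is what the Kummer description of $H^1_{\rm fin}$ requires.
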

\begin{proof} Since $p\nmid\ell^2-1$, one can mimic the proof of \cite[Lemma 2.6]{BD1} and show that the groups $H^1_{\rm fin}(K_{\ell},E[p])$ and $H^1_{\rm sing}(K_{\ell},E[p])$ are both isomorphic to $\Z/p\Z$. But the prime ideal $\ell\cl O_K$ of $\cl O_K$ splits completely in $H_c$, hence $H^1_{\rm fin}(H_{c,\ell},E[p])$ and $H^1_{\rm sing}(H_{c,\ell},E[p])$ are both isomorphic to $\Z/p\Z[G_c]$ as $\F_p$-vector spaces. Finally, bijection \eqref{bijection-eq} establishes isomorphisms which are obviously $G_c$-equivariant. \end{proof}

For $\star\in\{\text{fin, sing}\}$ we fix once and for all isomorphisms
\[ H^1_\star(K_\ell,E[p])\simeq\Z/p\Z \]
which will often be viewed as identifications according to convenience.

The next result is the counterpart of \cite[Proposition 4.5]{LV}. In fact, since the group $\Gal(H_c/\Q)$ is generalized dihedral, with the non-trivial element $\rho$ of $\Gal(K/\Q)$ acting on the abelian normal subgroup $G_c$ by
\[ \sigma\longmapsto\rho\sigma\rho^{-1}=\sigma^{-1}, \]
the proof of \cite[Proposition 4.5]{LV} is valid \emph{mutatis mutandis} in our present context as well.

\begin{proposition} \label{existence-admissible-primes}
Let $s$ be a non-zero element of $H^1(H_c,E[p])$. For every $\delta\in\{\pm1\}$ there are infinitely many admissible primes $\ell$ such that $p$ divides $a_\ell+\delta(\ell+1)$ and $\res_\ell(s)\neq0$.
\end{proposition}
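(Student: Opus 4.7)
The plan is to prove the proposition by translating the admissibility and non-vanishing conditions into statements about Frobenius conjugacy classes in a suitable finite Galois extension of $\Q$, and then applying the Chebotarev density theorem.

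First I would reformulate the admissibility conditions at a prime $\ell \nmid NpcC$ entirely in terms of the Frobenius element $\Frob_\ell$. The inertness of $\ell$ in $K$ is equivalent to $\Frob_\ell$ restricting to the non-trivial element of $\Gal(K/\Q)$. The condition $p \nmid \ell^2-1$ is equivalent to $\ell \not\equiv \pm 1 \pmod p$, i.e.\ to $\det \rho_{E,p}(\Frob_\ell) = \ell \bmod p$ being distinct from $\pm 1$. The condition $p \mid (\ell+1)^2 - a_\ell^2$ combined with the refinement $p \mid a_\ell + \delta(\ell+1)$ fixes $\tr \rho_{E,p}(\Frob_\ell) \equiv -\delta(\ell+1) \pmod p$, so that $\rho_{E,p}(\Frob_\ell)$ has eigenvalues $\{-\delta,-\delta\ell\}$ on $E[p]$. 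Finally, the finiteness condition on the support of $\Gamma_\ell^{\rm ab}$ is automatic by Theorem \ref{Ihara-1} once we impose $\ell \nmid C$.

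Next I would bring $s$ into the picture. Let $L := H_c(E[p])$ and let $L_s/L$ be the finite extension cut out by the restriction of (a cocycle representing) $s$ to $G_L$; since this restriction is a genuine homomorphism $G_L \to E[p]$, the extension $L_s/L$ is abelian with Galois group embedded in $E[p]$. The key preliminary fact is that $s$ restricts non-trivially to $G_L$: by inflation–restriction, the kernel of $H^1(H_c, E[p]) \to H^1(L, E[p])$ equals $H^1(\Gal(L/H_c), E[p])$, and this latter group vanishes. Indeed, using Assumption \ref{ass}(2)–(3) and the surjectivity of $\rho_{E,p}$, together with $p \nmid 2 h^+(c) \delta_K$, one shows that $H_c \cap \Q(E[p]) = \Q$ so $\Gal(L/H_c) \simeq \GL_2(\F_p)$, and a standard cohomological computation gives $H^1(\GL_2(\F_p), \F_p^{\oplus 2}) = 0$ for $p \geq 5$. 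Consequently $L_s \supsetneq L$, and $\Gal(L_s/L)$ is a non-trivial $\GL_2(\F_p)$-submodule of $E[p]$, hence (by irreducibility) equals $E[p]$.

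Now I would identify the element $\tau \in \Gal(L_s/\Q)$ whose conjugacy class supplies the desired primes. Choose $\tau$ so that it acts as the non-trivial element on $K$ (hence on $H_c$ via the generalized dihedral structure), acts on $E[p]$ as a matrix with eigenvalues $-\delta$ and $-\delta \ell$ for some $\ell \not\equiv \pm 1 \pmod p$ (possible because $\rho_{E,p}|_{G_K}$ is still surjective onto the right coset), and acts non-trivially on $L_s/L$. The last requirement is the technically delicate point: one must verify that the subgroup $\Gal(L_s/L)$ admits an element which is moved non-trivially by a lift with the prescribed image in $\Gal(L/\Q)$. This follows because $\Gal(L_s/L) = E[p]$ and a suitable conjugation argument (or the fact that $H^1(\Gal(L/H_c), E[p]) = 0$ makes the extension $L_s/H_c$ non-split) produces such a lift. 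By Chebotarev, there are infinitely many primes $\ell$ with $\Frob_\ell$ in this conjugacy class; any such $\ell$ is $p$-admissible, satisfies $p \mid a_\ell + \delta(\ell+1)$, and has $\res_\ell(s) \neq 0$ because $\Frob_\ell$ acts non-trivially on $L_s/L$ (the precise linkage being the standard identification of $H^1_{\rm fin}(H_{c,\ell}, E[p])$ with the evaluation of the $s$-extension at $\Frob_\ell$, as in the proof of \cite[Theorem 3.2]{BD1}).

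The main obstacle is the simultaneous realization in step three: ensuring that a single Frobenius class witnesses both the precise sign condition $p \mid a_\ell + \delta(\ell+1)$ and the non-vanishing $\res_\ell(s) \neq 0$, while also respecting the dihedral constraint imposed by $\Gal(H_c/\Q)$. This requires a careful analysis of how $\Gal(L_s/\Q)$ sits inside the semidirect product structure compatible with $\Gal(H_c/\Q)$ acting through $\sigma \mapsto \sigma^{-1}$ on $G_c$, which is exactly the point where the proof of \cite[Proposition 4.5]{LV} is transplanted \emph{mutatis mutandis}.
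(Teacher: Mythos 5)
Your proposal reconstructs, in the Chebotarev-density framework, precisely the argument of \cite[Proposition 4.5]{LV} to which the paper defers (the paper's own proof is a one-line citation, observing that the generalized dihedral structure of $\Gal(H_c/\Q)$, with the non-trivial element of $\Gal(K/\Q)$ acting on $G_c$ by inversion, lets the \cite{LV} argument carry over \emph{mutatis mutandis}). The one slight imprecision is your assertion that $H_c\cap\Q(E[p])=\Q$, which need not be true; what the argument actually requires, and what does follow from Assumption \ref{ass}(3) together with the fact that $H_c/K$ is abelian, is that $\rho_{E,p}(G_{H_c})$ contains $\SL_2(\F_p)$, hence $-\mathrm{Id}$, giving the vanishing $H^1\bigl(\Gal(L/H_c),E[p]\bigr)=0$ that you need for the inflation--restriction step.
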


The existence result of Proposition \ref{existence-admissible-primes} will be crucially exploited in \S \ref{main-subsec} to show the vanishing of Selmer groups which is one of the goals of this paper.

\section{Level raising and Galois representations} \label{raising-sec}

In this section we prove a level raising result modulo $p$ at admissible primes (Theorem \ref{cong-thm}) and an isomorphism between certain Galois representations over $\F_p$ attached to $J_\epsilon^{(\ell)}$ and $E$ (Theorem \ref{teo-rep}).

\subsection{Raising the level in one admissible prime} \label{raising-subsec}

As in Section \ref{sec-das}, fix a prime $\ell\nmid DM$ and a character $\chi\in\widehat G_c$ whose parity is denoted by $\epsilon$. Recall the modular eigenform $f$ for $\Gamma_0^D(M)$ introduced in Section \ref{Popa-section} and the homomorphism $\varphi_f:\T_M\rightarrow\Z$ of \eqref{kerf}. Write
\[ \bar\varphi_f:\T_M\longrightarrow\Z/p\Z \]
for the composition of $\varphi_f$ with the projection $\Z\rightarrow \Z/p\Z$ and denote by $\m_f$ its kernel, so that $\m_f=I_f+(p)$ where $I_f=\ker(\varphi_f)$.

As is well known, $\pi^*$ is injective and this allows us to identify $H_1\bigl(X_0^D(M),\Z_S\bigr)^2$ with the submodule ${\rm im}(\pi^*)$ of $H_1\bigl(X_0^D(M\ell),\Z_S\bigr)$, which is stable under the action of $\T_{M\ell}$; this provides $H_1\bigl(X_0^D(M),\Z_S\bigr)^2$ with a natural structure of $\T_{\ell M}$-module. More precisely, $\pi^*$ is equivariant for the actions of $T_q$, $t_q$ for primes $q\nmid M\ell$ and of $U_q$, $u_q$ for primes $q|M$, while it intertwines the actions of $\smallmat{T_\ell}{-1}\ell0$ on the domain and of $u_\ell$ on the codomain.

Thanks to \cite[Lemma 6.2]{LRV}, the natural inclusion $\ker(\pi_*)\subset H_1\bigl(X^D_0(M),\Z\bigr)_\epsilon^2$ induces an injection $\ker(\pi_*)\hookrightarrow {\rm coker}(\pi^*)$, so we may consider the $\Z$- and $\Z_S$-modules
\[\Phi_\ell:={\rm coker}(\pi^*)/\ker(\pi_*),\qquad\Phi_{\ell,S}:=\Phi_\ell\otimes\Z_S, \]
respectively, which are endowed with canonical structures of $\T_{M\ell}$-modules and, again by \cite[Lemma 6.2]{LRV}, have finite cardinality.

For any abelian group $M$ endowed with an action of the involution $\tau$, let $M_\pm$ denote the maximal quotient of $M$ on which $\tau$ acts as $\pm1$. Since the maps $\pi_1$ and $\pi_2$ of Section \ref{sec-das} are defined over $\Q$, if $\epsilon\in\{+,-\}$ then there are morphisms
\[  \pi^*_\epsilon:H_1\bigl(X^D_0(M),\Z\bigr)_\epsilon^2\rightarrow H_1\bigl(X^D_0(M\ell),\Z\bigr)_\epsilon,\quad \pi_{*,\epsilon}:H_1\bigl(X^D_0(M\ell),\Z\bigr)_\epsilon\rightarrow H_1\bigl(X^D_0(M),\Z\bigr)_\epsilon^2\]
and an equality $\Phi_{\ell,\epsilon}={\rm coker}(\pi^*_\epsilon)/\ker(\pi_{*,\epsilon})$.

By a slight abuse of notation, from here on we will use the symbols $\pi_\ast$ and $\pi^\ast$ to denote also the analogues with $\Z_S$-coefficients of the maps of Section \ref{sec-das}. For any congruence subgroup $G$ let $S_2(G)$ denote the $\C$-vector space of weight $2$ cusp forms on $G$. Write $\T_{M\ell}^{\ell\text{-old}}$ and
$\T_{M\ell}^{\ell\text{-new}}$ for the quotients of $\T_{M\ell}$ acting faithfully, respectively, on the image $S_2^{\ell\text{-old}}\bigl(\Gamma^D_0(M\ell)\bigr)$ of the degeneracy map
\[ S_2\bigl(\Gamma^D_0(M)\bigr)\oplus S_2\bigl(\Gamma^D_0(M)\bigr)\longrightarrow S_2\bigl(\Gamma^D_0(M\ell)\bigr) \]
and on its orthogonal complement with respect to the Petersson scalar product. We keep the notations $T_q$ and $U_q$ to denote Hecke operators in $\T_M$, while $t_q$ and $u_q$ will be used for those in $\T_{M\ell}$.

Let $\m_f'$ denote the ideal of $\T_{M\ell}$ generated by $t_q-a_q$ for primes $q\nmid M\ell$, $u_q-a_q$ for primes $q|M$, $u_\ell-\delta$ and the prime $p$. Tensoring $\pi_*$ and $\pi^*$ with $\T_{M\ell}/\m_f'$ over $\T_{M\ell}$ we obtain maps
\[ \bar\pi^*:H_1\bigl(X^D_0(M),\Z_S\bigr)^2\big/\m_f'\longrightarrow H_1\bigl(X^D_0(M\ell),\Z_S\bigr)\big/\m_f'\]
and
\[ \bar\pi_*:H_1\bigl(X^D_0(M\ell),\Z_S\bigr)\big/\m_f'\longrightarrow H_1\bigl(X^D_0(M),\Z_S\bigr)^2\big/\m_f'. \]

\begin{lemma} \label{lemma-cong-1}
The map $\bar\pi_*$ is surjective.
\end{lemma}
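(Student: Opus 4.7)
The plan is to show something much stronger than what is claimed: namely that $\pi_*$ itself becomes surjective after tensoring with $\Z/p\Z$, and then to observe that since $p\in\m_f'$, the map $\bar\pi_*$ factors through $\pi_*\otimes\Z/p\Z$ and so is automatically surjective as well. The starting point is the long exact homology sequence \eqref{ex-seq-iso} invoked earlier in the excerpt, which after identifying $H_0$ terms yields a canonical isomorphism
\[
\operatorname{coker}(\pi_*)\;\simeq\;H_1(\Gamma_\ell,\Z)\;=\;\Gamma_\ell^{\rm ab}.
\]
This identification is Hecke-equivariant for the natural $\T_{M\ell}$-structure, and tensoring with $\Z_S$ poses no difficulty because, by Assumption \ref{ass}(1), $p\notin S$, so $\Z_S$ is flat over $\Z$ and the $p$-primary part of $\operatorname{coker}(\pi_*)\otimes\Z_S$ coincides with that of $\operatorname{coker}(\pi_*)$.

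Next I would invoke the two pieces of information available about $\ell$: admissibility condition (2) says that the support of $\Gamma_\ell^{\rm ab}$ consists of primes dividing $C\ell$, while admissibility conditions (1) and (4) force $\ell\ne p$; moreover condition (2) of Assumption \ref{ass} rules out $p\mid C$. Combining these, $p$ does not divide the (finite) order of $\Gamma_\ell^{\rm ab}$, hence
\[
\operatorname{coker}(\pi_*)\otimes_\Z\Z/p\Z\;=\;0.
\]
Right-exactness of $-\otimes_\Z\Z/p\Z$ then gives that $\pi_*\otimes\text{id}_{\Z/p\Z}$ is surjective. Since the ideal $\m_f'\subset\T_{M\ell}$ contains $p$, the quotient map
\[
H_1\bigl(X_0^D(M),\Z_S\bigr)^2\big/\m_f'
\]
factors through $H_1\bigl(X_0^D(M),\Z_S\bigr)^2\otimes\Z/p\Z$, and the same holds for the source. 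Thus $\bar\pi_*$ is a quotient of a surjective map and is therefore itself surjective.

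The main (and essentially the only) non-trivial ingredient is the control on the support of $\Gamma_\ell^{\rm ab}$ provided by Theorem \ref{Ihara-1}, whose proof in turn rests on the Diamond--Taylor analogue of Ihara's Lemma for Shimura curves. Everything else is formal: the long exact sequence \eqref{ex-seq-iso}, flatness of $\Z_S$, and right-exactness of tensoring with $\Z/p\Z$. In particular, no delicate Hecke-theoretic calculation (such as computing the matrix of $\pi_*\pi^*$ modulo $\m_f'$, which is in fact \emph{singular} thanks to the admissibility condition $p\mid(\ell+1)^2-a_\ell^2$) is needed; the potential obstruction coming from this singularity is precisely what is bypassed by working directly with the cokernel and applying Theorem \ref{Ihara-1}.
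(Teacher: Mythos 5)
Your proposal is correct and takes essentially the same approach as the paper: both use the exact sequence from \eqref{ex-seq-iso} to identify $\operatorname{coker}(\pi_*)$ with $\Gamma_\ell^{\rm ab}\otimes\Z_S$, observe that $p$ does not divide $|\Gamma_\ell^{\rm ab}|$ (via admissibility condition 2 and Assumption \ref{ass}), and conclude by right-exactness. The paper tensors the four-term exact sequence directly with $\T_{M\ell}/\m_f'$ rather than passing through $\Z/p\Z$ first, but this is a cosmetic difference.
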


\begin{proof} As in the proof of Proposition \ref{finite-ab-prop}, there is an exact sequence
\[ 0\longrightarrow\ker(\pi_*)\longrightarrow H_1\bigl(X^D_0(M\ell),\Z_S\bigr)\overset{\pi_*}\longrightarrow H_1\bigl(X^D_0(M),\Z_S\bigr)^2\longrightarrow\G^{\rm ab}_\ell\otimes\Z_S\longrightarrow0. \]
Since the image of $\pi_*$ is stable under $\T_{M\ell}$, the group $\G^{\rm ab}_\ell\otimes\Z_S$ inherits an action of $\T_{M\ell}$. Since $p$ does not divide the cardinality of $\Gamma^{\rm ab}_\ell$ and
the residual characteristic of $\mathfrak m'_f$ is $p$,
we have $\G^{\rm ab}_\ell/\m'_f=0$, and the result follows. \end{proof}

\begin{proposition} \label{cong-prop}
There is a canonical isomorphism
\[ {\rm coker}(\bar\pi_*\circ \bar\pi^*)\simeq \Phi_\ell/\m_f'. \]
\end{proposition}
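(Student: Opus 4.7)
The plan is to work integrally first, producing a short exact sequence relating $\mathrm{coker}(\pi_*\circ\pi^*)$ to $\Phi_\ell$ and $\Gamma_\ell^{\rm ab}\otimes\Z_S$, and only then to reduce modulo $\m_f'$. Write $A:=H_1\bigl(X_0^D(M),\Z_S\bigr)^2$ and $B:=H_1\bigl(X_0^D(M\ell),\Z_S\bigr)$ for brevity. By right exactness of $-\otimes_{\T_{M\ell}}\T_{M\ell}/\m_f'$ applied to
\[ A\xrightarrow{\pi_*\circ\pi^*}A\longrightarrow\mathrm{coker}(\pi_*\circ\pi^*)\longrightarrow 0, \]
one obtains immediately a canonical identification $\mathrm{coker}(\bar\pi_*\circ\bar\pi^*)=\mathrm{coker}(\pi_*\circ\pi^*)/\m_f'$. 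It is therefore enough to produce a natural isomorphism between $\Phi_\ell/\m_f'$ and $\mathrm{coker}(\pi_*\circ\pi^*)/\m_f'$.

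Next, I would construct the four-term exact sequence
\[ 0\longrightarrow\ker(\pi_*)\longrightarrow\mathrm{coker}(\pi^*)\xrightarrow{\ \pi_*\ }\mathrm{coker}(\pi_*\circ\pi^*)\longrightarrow\mathrm{coker}(\pi_*)\longrightarrow 0 \]
by observing that $\pi_*:B\to A$ descends to the displayed middle arrow, whose kernel equals the image in $\mathrm{coker}(\pi^*)=B/\mathrm{im}(\pi^*)$ of $\ker(\pi_*)\subset B$. That image is isomorphic to $\ker(\pi_*)$ itself because $\ker(\pi_*)\cap\mathrm{im}(\pi^*)=0$, a consequence of the injectivity of $\pi_*\circ\pi^*$ established in \cite[Lemma 6.2]{LRV}. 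Since $\Phi_\ell=\mathrm{coker}(\pi^*)/\ker(\pi_*)$ by definition and $\mathrm{coker}(\pi_*)\simeq\Gamma_\ell^{\rm ab}\otimes\Z_S$ via the long exact sequence used in the proof of Proposition \ref{finite-ab-prop}, this collapses to the short exact sequence
\[ 0\longrightarrow\Phi_\ell\longrightarrow\mathrm{coker}(\pi_*\circ\pi^*)\longrightarrow\Gamma_\ell^{\rm ab}\otimes\Z_S\longrightarrow 0. \]

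Finally, I would tensor this short exact sequence with $\T_{M\ell}/\m_f'$. By the admissibility of $\ell$ together with the condition $p\nmid C$ from Assumption \ref{ass}, the group $\Gamma_\ell^{\rm ab}$ has order prime to $p$; since $\m_f'\ni p$, both $(\Gamma_\ell^{\rm ab}\otimes\Z_S)/\m_f'$ and $\operatorname{Tor}_1^{\T_{M\ell}}\bigl(\T_{M\ell}/\m_f',\Gamma_\ell^{\rm ab}\otimes\Z_S\bigr)$ vanish, the latter because any $\operatorname{Tor}$ between modules annihilated by coprime integers is zero. The resulting long exact sequence therefore degenerates to the desired isomorphism $\Phi_\ell/\m_f'\simeq\mathrm{coker}(\pi_*\circ\pi^*)/\m_f'$, which combined with the first step yields the proposition.

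The one delicate point is the exactness of the four-term sequence, but this reduces to a routine diagram chase once the transversality $\ker(\pi_*)\cap\mathrm{im}(\pi^*)=0$ is in hand; no serious obstacle is expected. A subtle bookkeeping remark is that the proposition is stated for $\Phi_\ell$ whereas the integral construction naturally produces $\Phi_{\ell,S}$, but since $p\notin S$ and $\m_f'\supset(p)$ the two reductions $\Phi_\ell/\m_f'$ and $\Phi_{\ell,S}/\m_f'$ coincide, so this causes no issue.
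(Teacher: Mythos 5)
Your proof is correct, and it rests on exactly the same two ingredients as the paper's argument: right exactness of $-\otimes_{\T_{M\ell}}\T_{M\ell}/\m_f'$, and the fact that $p\nmid|\Gamma_\ell^{\rm ab}|$ (which kills both the $\Gamma_\ell^{\rm ab}$-cokernel term and the relevant Tor term, these being simultaneously annihilated by $p$ and by the exponent of $\Gamma_\ell^{\rm ab}$). The organization, however, is genuinely different and arguably cleaner. The paper does not isolate your integral short exact sequence $0\to\Phi_\ell\to\mathrm{coker}(\pi_*\circ\pi^*)\to\Gamma_\ell^{\rm ab}\otimes\Z_S\to0$; instead it presents $\Phi_{\ell,S}$ directly as the quotient of $H_1\bigl(X_0^D(M\ell),\Z_S\bigr)$ by the submodule $\langle\ker(\pi_*),\mathrm{im}(\pi^*)\rangle$, reduces mod $\m_f'$, invokes Lemma \ref{lemma-cong-1} (surjectivity of $\bar\pi_*$) to rewrite $\mathrm{coker}(\bar\pi_*\circ\bar\pi^*)$ as a quotient of $H_1\bigl(X_0^D(M\ell),\Z_S\bigr)/\m_f'$ by $\langle\ker(\bar\pi_*),\mathrm{im}(\bar\pi^*)\rangle$, and then matches the two quotients. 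The matching step, namely that $\ker(\bar\pi_*)$ equals the image of $\ker(\pi_*)/\m_f'$, implicitly uses the same Tor vanishing you make explicit (the paper leaves this to the reader; Lemma \ref{lemma-cong-1} as stated only gives surjectivity). Your route makes the base-change mechanism transparent by separating the integral structure from the reduction step, at the modest cost of verifying the four-term exactness (which is indeed a straightforward diagram chase once one knows $\ker(\pi_*)\cap\mathrm{im}(\pi^*)=0$, itself a consequence of the injectivity of $\pi_*\circ\pi^*$ from \cite[Lemma 6.2]{LRV}). Your bookkeeping remark about $\Phi_\ell$ versus $\Phi_{\ell,S}$ matches the paper's closing observation verbatim.
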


\begin{proof} The module $\Phi_{\ell,S}$ is the quotient of ${\rm coker}(\pi^*)$ by $\ker(\pi_*)$, so it is isomorphic to the quotient of $H_1(X^D_0(M\ell),\Z_S)$ by the $\Z_S$-submodule generated by $\ker(\pi_*)$ and ${\rm im}(\pi^*)$. Hence there is an exact sequence
\begin{equation} \label{eq-cong*}
\langle \ker(\pi_*),{\rm im}(\pi^*)\rangle/\m'_f\longrightarrow H_1\bigl(X_0^D(M\ell),\Z_S\bigr)\big/\m'_f\longrightarrow\Phi_{\ell,S}/\m'_f\longrightarrow0
\end{equation}
Thanks to Lemma \ref{lemma-cong-1}, there is also an exact sequence
\[ \ker(\pi_*)/\m_f'\longrightarrow H_1\bigl(X^D_0(M\ell),\Z_S\bigr)\big/\m_f'\overset{\bar\pi_*}\longrightarrow H_1\bigl(X^D_0(M),\Z_S\bigr)^2\big/\m_f'\longrightarrow 0. \]
We conclude that $\bar\pi_*$ induces an isomorphism
\begin{equation} \label{pi}
\bar\pi_*:\bigl(H_1(X^D_0(M\ell),\Z_S)/\m_f'\bigr)\big/\langle\ker(\bar\pi_*),{\rm im}(\bar\pi^*)\rangle\overset{\simeq}\longrightarrow{\rm coker}(\bar\pi_*\circ\bar\pi^*).
\end{equation}
Since $\langle\ker(\bar\pi_*),{\rm im}(\bar\pi^*)\rangle$ is equal to the image of $\langle \ker(\pi_*),{\rm im}(\pi^*)\rangle/\m_f'$ in $H_1\bigl(X^D_0(M\ell),\Z_S\bigr)\big/\m_f'$ via the first map in \eqref{eq-cong*}, this shows that ${\rm coker}(\bar\pi_*\circ \bar\pi^*)$ is isomorphic to $\Phi_{\ell,S}/\m_f'$. Finally, since $p\not\in S$ the groups $\Phi_{\ell,S}/\m_f'$ and $\Phi_{\ell}/\m_f'$ are canonically identified, whence the claim. \end{proof}

Now we can prove the main result of this subsection.

\begin{theorem} \label{cong-thm}
Suppose that $\ell$ is an admissible prime such that $p|a_\ell-\delta(\ell+1)$ for a suitable $\delta\in\{+1,-1\}$. There exists a morphism
\[ f_\ell:\T_{M\ell}^{\text{$\ell$-new}}\longrightarrow\Z/p\Z\]
such that
\begin{itemize}
\item $f_\ell(t_q)=a_q\pmod{p}$ for all primes $q\nmid M\ell$;
\item $f_\ell(u_q)=a_q\pmod{p}$ for all primes $q|M$;
\item $f_\ell(u_\ell)=\delta\pmod{p}$.
\end{itemize}
If $\m_{f_\ell}$ denotes the kernel of $f_\ell$ then there is a group isomorphism
\begin{equation} \label{cong-thm2}
\Phi_{\ell,\epsilon}/\m_{f_\ell}\overset\simeq\longrightarrow H_1(E,\Z)_\epsilon\big/pH_1(E,\Z)_\epsilon\simeq\Z/p\Z.
\end{equation}
\end{theorem}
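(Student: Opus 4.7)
The plan is to combine Proposition \ref{cong-prop} with the matrix description of $\pi_{*}\circ\pi^{*}$ from the proof of \cite[Lemma 6.2]{LRV}, according to which on $H_1\bigl(X_0^D(M),\Z_S\bigr)^2$ this composition is represented by $\smallmat{\ell+1}{T_\ell}{T_\ell}{\ell+1}$. After passing to $\epsilon$-coinvariants and reducing modulo $\m_f'$, this will display $\Phi_{\ell,\epsilon}/\m_f'$ as the cokernel of an explicit $2\times 2$ matrix of rank one over $\F_p$, from which both the existence of $f_\ell$ and the isomorphism \eqref{cong-thm2} will be extracted.

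First I would identify $H_1\bigl(X_0^D(M),\Z_S\bigr)_\epsilon/\m_f'$ with $H_1(E,\Z)_\epsilon/pH_1(E,\Z)_\epsilon$. Since $\pi_E$ is defined over $\Q$, the isomorphism \eqref{map1-eq} is $\tau$-equivariant and restricts to an isomorphism $H_1\bigl(X_0^D(M),\Z_S\bigr)_{f,\epsilon}\simeq H_1(E,\Z_S)_\epsilon$; the target is free of rank one over $\Z_S$. Since $p\notin S$ by Assumption \ref{ass}(1) and $p\neq 2$ by admissibility, $\epsilon$-projection commutes with reduction modulo $p$, and we obtain a canonical identification
\[ H_1\bigl(X_0^D(M),\Z_S\bigr)_\epsilon/\m_f' \;\simeq\; H_1(E,\Z)_\epsilon/pH_1(E,\Z)_\epsilon\;\simeq\;\F_p, \]
under which $T_\ell$ acts as multiplication by $a_\ell\bmod p$. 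The $\epsilon$-variant of Proposition \ref{cong-prop} (whose proof is formally identical to the stated one, the $\epsilon$-analogue of Lemma \ref{lemma-cong-1} still holding because $p\nmid|\G^{\rm ab}_\ell|$) then yields
\[ \Phi_{\ell,\epsilon}/\m_f'\;\simeq\;\mathrm{coker}\Bigl(\smallmat{\ell+1}{a_\ell}{a_\ell}{\ell+1}:\F_p^2\to\F_p^2\Bigr). \]

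By hypothesis $a_\ell\equiv\delta(\ell+1)\pmod p$ with $\delta\in\{\pm 1\}$, so the matrix reduces to $(\ell+1)\smallmat{1}{\delta}{\delta}{1}$; admissibility condition (4) gives $\ell+1\not\equiv 0\pmod p$, and $\smallmat{1}{\delta}{\delta}{1}$ has rank one over $\F_p$. Hence the cokernel is one-dimensional and $\Phi_{\ell,\epsilon}/\m_f'\simeq\F_p$. The action of $\T_{M\ell}$ on this one-dimensional $\F_p$-vector space is through a ring homomorphism $\T_{M\ell}\to\F_p$ which, by the very definition of $\m_f'$, sends $t_q\mapsto a_q$, $u_q\mapsto a_q$ and $u_\ell\mapsto\delta$. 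Since $\mathrm{im}(\pi^*)$ is the $\ell$-old part of $H_1\bigl(X_0^D(M\ell),\Z_S\bigr)$, both $\mathrm{coker}(\pi^*)$ and its further quotient $\Phi_\ell$ are modules over $\T_{M\ell}^{\ell\text{-new}}$, so the above homomorphism descends to the desired $f_\ell:\T_{M\ell}^{\ell\text{-new}}\to\F_p$ with the prescribed values on generators.

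For \eqref{cong-thm2}: by construction $f_\ell$ vanishes on the image of $\m_f'$ in $\T_{M\ell}^{\ell\text{-new}}$, so this image is contained in $\m_{f_\ell}$ and the natural surjection $\Phi_{\ell,\epsilon}/\m_f'\twoheadrightarrow\Phi_{\ell,\epsilon}/\m_{f_\ell}$ is defined. Conversely, $\m_{f_\ell}$ acts as zero on the one-dimensional $\F_p$-space $\Phi_{\ell,\epsilon}/\m_f'$ (its action factors through $f_\ell$), so $\m_{f_\ell}\Phi_{\ell,\epsilon}=\m_f'\Phi_{\ell,\epsilon}$ and the surjection is an isomorphism, establishing \eqref{cong-thm2} through the chain of identifications assembled above. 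The principal obstacle in this strategy is the verification that $\Phi_\ell$ genuinely is an $\ell$-new Hecke module and that Proposition \ref{cong-prop} (together with Lemma \ref{lemma-cong-1}) transports cleanly to $\epsilon$-coinvariants; once this bookkeeping is settled, the matrix computation itself is mechanical, given the defining conditions of admissibility.
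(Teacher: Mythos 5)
Your overall strategy — pass to coefficients modulo $\m'_f$, invoke Proposition \ref{cong-prop}, and read off $\Phi_{\ell,\epsilon}$ from the cokernel of the $2\times 2$ matrix $\smallmat{\ell+1}{T_\ell}{T_\ell}{\ell+1}$ — is the same as the paper's, and the first half (the identification $H_1(X_0^D(M),\Z_S)_\epsilon/\m_f\simeq H_1(E,\Z)_\epsilon/p\simeq\F_p$, and the fact that $\Phi_\ell$ is a $\T_{M\ell}^{\ell\text{-new}}$-module so that the one-dimensionality immediately yields the character $f_\ell$) is correct. Note that you wrote $H_1(X_0^D(M),\Z_S)_\epsilon/\m'_f$ where you must mean $\m_f$, since $\m'_f$ is an ideal of $\T_{M\ell}$ and does not act on $H_1(X_0^D(M),\Z_S)$ alone; this is minor.

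The genuine gap is in the step
\[ \Phi_{\ell,\epsilon}/\m_f'\;\simeq\;\mathrm{coker}\Bigl(\smallmat{\ell+1}{a_\ell}{a_\ell}{\ell+1}:\F_p^2\to\F_p^2\Bigr). \]
Proposition \ref{cong-prop} identifies $\Phi_\ell/\m'_f$ with $\mathrm{coker}(\bar\pi_*\circ\bar\pi^*)$, where the bar denotes reduction modulo $\m'_f$ — and $\m'_f$ contains the generator $u_\ell-\delta$, which acts on $H_1(X_0^D(M),\Z_S)^2$ by the \emph{non-diagonal} matrix $\smallmat{T_\ell-\delta}{-1}{\ell}{-\delta}$. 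Consequently $\bigl(H_1(X_0^D(M),\Z_S)^2/\m'_f\bigr)_\epsilon$ is already one-dimensional, not two-dimensional, so the source and target of $\bar\pi_*\circ\bar\pi^*$ are $\F_p$ rather than $\F_p^2$ and your matrix computation is taking place on the wrong space. What you are in effect computing is $\Phi_{\ell,\epsilon}$ modulo the smaller ideal $I=(p,\,t_q-a_q,\,u_q-a_q)$, which does give $\F_p$; but one must then still show that further quotienting by $u_\ell-\delta$ does not kill this line, i.e.\ that $u_\ell$ acts on $\Phi_{\ell,\epsilon}/I$ by $\delta$ rather than $-\delta$. That is precisely what the paper's change of basis $(m,n)\mapsto(\ell+1)m-T_\ell n$ and the accompanying observation — that $x$ is a $T_\ell$-eigenvector with eigenvalue $a_\ell\equiv\delta(\ell+1)$ if and only if $(x,\delta\ell x)$ is a $u_\ell$-eigenvector with eigenvalue $\delta$ — are for. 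Your argument never verifies this sign, so as written it only establishes that $\Phi_{\ell,\epsilon}/\m'_f$ is a quotient of $\F_p$, possibly zero. Adding the explicit diagonalization (or equivalently checking, as one can directly, that $u_\ell$ acts on the quotient $\F_p^2/\langle(1,\delta)\rangle$ by $\delta$) closes the gap; once it is closed, the remainder of your argument, including the identification of $\m'_f$-coinvariants with $\m_{f_\ell}$-coinvariants, is fine.
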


\begin{proof} At the cost of enlarging $S$, in this proof we assume that $\ell+1$ is invertible in $\Z_S$ (cf. Remark \ref{size-S-rem}). Then, since $\pi_*\circ\pi^*=\smallmat{\ell+1}{T_\ell}{T_\ell}{\ell+1}$, the assignment $(m,n)\mapsto (\ell+1)m-T_\ell(n)$ induces an isomorphism of groups
\begin{equation} \label{eq-cong-1}
H_1\bigl(X^D_0(M),\Z_S\bigr)^2\big/{\rm im}(\pi_*\circ\pi^*)\overset\simeq\longrightarrow H_1\bigl(X^D_0(M),\Z_S\bigr)\big/\bigl(T_\ell^2-(\ell+1)^2\bigr)
\end{equation}
which is equivariant for the action of the Hecke operators $t_q$ (respectively, $T_q$) for $q\nmid N\ell$ and $u_q$ (respectively, $U_q$) for $q|M$ on the left-hand (respectively, right-hand) side. Since $u_\ell$ acts as $\smallmat{T_\ell}{-1}{\ell}0$ on $H_1(X^D_0(M),\Z_S)^2$, we see that $x\in H_1\bigl(X^D_0(M),\Z_S\bigr)\big/pH_1\bigl(X^D_0(M),\Z_S\bigr)$ is an eigenvector for $T_\ell$ with eigenvalue $a_\ell\equiv\delta(\ell+1)\pmod p$ if and only if $(x,\delta\ell x)$ is an eigenvector for $u_\ell$ with eigenvalue $\delta$. Thanks to this and \eqref{eq-cong-1}, we find an isomorphism of groups
\begin{equation} \label{eq-cong-2}
{\rm coker}(\pi_*\circ\pi^*)/\m'_f\overset\simeq\longrightarrow H_1\bigl(X^D_0(M),\Z_S\bigr)\big/\m_f.
\end{equation}
Since ${\rm coker}(\pi_*\circ\pi^*)/\m'_f$ and ${\rm coker}(\bar\pi_*\circ\bar\pi^*)$ are canonically isomorphic, Proposition \ref{cong-prop} yields an isomorphism of groups
\begin{equation} \label{eq-cong-4}
\Phi_\ell/\m_f'\overset\simeq\longrightarrow H_1\bigl(X^D_0(M),\Z_S\bigr)\big/\m_f.
\end{equation}
It is now immediate to check that there is a canonical isomorphism
\[ H_1\bigl(X^D_0(M),\Z_S\bigr)\big/\m_f\simeq H_1\bigl(X^D_0(M),\Z_S\bigr)_f\big/pH_1\bigl(X^D_0(M),\Z_S\bigr)_f. \]
By \eqref{map1-eq}, the group $H_1\bigl(X^D_0(M),\Z_S\bigr)_f$ is isomorphic to $H_1(E,\Z_S)$. Since $p\not\in S$, isomorphism \eqref{eq-cong-4} induces an isomorphism of groups
\[ \Phi_\ell/\m'_f\overset\simeq\longrightarrow H_1(E,\Z)/pH_1(E,\Z)\simeq(\Z/p\Z)^2. \]
All the maps involved are equivariant for the action of $\tau$, so we get yet another isomorphism
\[ \Phi_{\ell,\epsilon}/\m_f'\overset\simeq\longrightarrow H_1(E,\Z)_\epsilon\big/pH_1(E,\Z)_\epsilon\simeq\Z/p\Z. \]
The action of $\T_{M\ell}$ on $\Phi_{\ell}$ is through its $\ell$-new quotient, so $\m'_f$ is fact belongs to $\T_{M\ell}^{\ell\text{-new}}$. Since $\Phi_\ell/\m_f'$ is a one-dimensional $\F_p$-vector space, the action of $\T_{M\ell}^{\ell\text{-new}}$ is given by a character $f_\ell:\T_{M\ell}^{\ell\text{-new}}\rightarrow\Z/p\Z$ whose kernel is $\m_{f_\ell}$, as was to be proved. \end{proof}

\subsection{Galois representations} \label{sec-galois-rep}

In this subsection we show the existence of an isomorphism of $G_\Q$-modules $J_\epsilon^{(\ell)}[p]/\m_{f_\ell}\simeq E[p]$ and of an isomorphism of groups $\Phi_{\ell,\epsilon}/\m_{f_\ell}\simeq H^1_{\rm sing}(K_\ell,E[p])$. Our arguments are inspired by those in \cite[\S 5.6]{BD1}. We fix an admissible prime $\ell$ and we suppose that
$p\mid a_\ell-\delta(\ell+1)$.

Write $G_{K_\ell}:=\Gal(\bar\Q_\ell/K_\ell)$ for the absolute Galois group of the local field $K_\ell$. Since we are assuming Conjecture \ref{rigid-analytic-conjecture}, there is a short exact sequence of left $\T_{M\ell}[G_{K_\ell}]$-modules
\[ 0\longrightarrow L_\epsilon\longrightarrow T_\epsilon(\bar\Q_\ell)\longrightarrow J^{(\ell)}_\epsilon(\bar\Q_\ell)\longrightarrow 0. \]
Since $L$ is a free abelian group and $T_\epsilon(\bar\Q_\ell)$ is divisible, the snake lemma implies that there is a short exact sequence of $\T_{M\ell}[G_{K_\ell}]$-modules
\begin{equation} \label{ex-seq-0}
0\longrightarrow T_\epsilon[p]\longrightarrow J_\epsilon^{(\ell)}[p]\longrightarrow L_\epsilon/p\longrightarrow 0
\end{equation}
where $T_\epsilon[p]$ and $J_\epsilon^{(\ell)}[p]$ are the $p$-torsion subgroups of $T_\epsilon(\bar\Q_\ell)$ and $J^{(\ell)}_\epsilon(\bar\Q_\ell)$, respectively. By tensoring the above exact sequence with $\T_{M\ell}/\m_{f_\ell}$ over $\T_{M\ell}$, and recalling that $p\in\m_{f_\ell}$, we find an exact sequence of $\T_{M\ell}/\fr m_{f_\ell}[G_{K_\ell}]$-modules
\[ 0\longrightarrow\bigl(T_\epsilon[p]/\m_{f_\ell}\bigr)\big/M\longrightarrow J_\epsilon^{(\ell)}[p]/\m_{f_\ell}\longrightarrow L_\epsilon/\fr m_{f_\ell}\longrightarrow0 \]
for a certain $\T_{M\ell}/\fr m_{f_\ell}[G_{K_\ell}]$-submodule $M$ of $T_\epsilon[p]/\m_{f_\ell}$. Taking $G_{K_\ell}$-cohomology of the above exact sequence yields an exact sequence of $\T_{M\ell}/\m_{f_\ell}$-modules
\begin{equation} \label{ex-seq-1}
L_\epsilon/\m_{f_\ell}\longrightarrow H^1\bigl(K_\ell,(T_\epsilon[p]/\m_{f_\ell})/M\bigr)\longrightarrow H^1\bigl(K_\ell,J_\epsilon^{(\ell)}[p]/\m_{f_\ell}\bigr)\longrightarrow H^1(K_\ell,L_\epsilon/\m_{f_\ell}).
\end{equation}
We first study the last term in \eqref{ex-seq-1}. Let $\Q_\ell^{\rm ab}$ be the maximal abelian extension of $\Q_\ell$; since $L_\epsilon/\m_{f_\ell}$ is abelian and defined over $K_\ell$, the cohomology group $H^1(K_\ell,L_\epsilon/\m_{f_\ell})$ is equal to the group of continuous homomorphisms $\Hom_{\text{cont}}\bigl(\Gal(\Q_\ell^{\rm ab}/K_\ell),L_\epsilon/\m_{f_\ell}\bigr)$. By local class field theory, there is an isomorphism
\[ \Gal(\Q_\ell^{\rm ab}/K_\ell)\simeq\hat\Z\times \cO_{K_\ell}^\times, \]
where $\cO_{K_\ell}^\times$ is the group of units in the ring of integers $\cO_{K_\ell}$ of $K_\ell$ and $\hat\Z\simeq\Gal(\Q_\ell^{\rm unr}/K_\ell)$ is (isomorphic to) the Galois group of the maximal unramified extension $K_\ell^{\rm unr}$ of $K_\ell$, which is equal to $\Q_\ell^{\rm unr}$ because the extension $K_\ell/\Q_\ell$ is unramified. Now recall the short exact sequence
\[ 0\longrightarrow \cO_{K_\ell,1}^\times\longrightarrow\cO_{K_\ell}^\times\longrightarrow(\cO_{K_\ell}/\ell\cO_{K_\ell})^\times\longrightarrow 0 \]
where $\cO_{K_\ell,1}^\times$ is the group of the elements of $\cO_{K_\ell}^\times$ which are congruent to $1$ modulo $\ell$. Since $\cl O_{K_\ell,1}^\times$ is a pro-$\ell$-group and $L_\epsilon/\m_{f_\ell}$ is $p$-torsion, the group $\Hom_{\rm cont}(\cO_{K_\ell,1}^\times,L_\epsilon/\m_{f_\ell})$ is trivial, hence
\[ \Hom_{\rm cont}(\cO_{K_\ell}^\times,L_\epsilon/\m_{f_\ell})=\Hom_{\rm cont}\bigl((\cO_{K_\ell}/\ell\cO_{K_\ell})^\times,L_\epsilon/\m_{f_\ell}\bigr)=0, \]
the second equality being due to the fact that $p\nmid\ell^2-1=|(\cO_{K_\ell}/\ell\cO_{K_\ell})^\times|$. It follows that there are canonical isomorphisms of groups
\[ \begin{split}\Hom_{\rm cont}\bigl(\Gal(\Q_\ell^{\rm ab}/K_\ell),L_\epsilon/\m_{f_\ell}\bigr)&\simeq\Hom_{\rm cont}\bigl(\Gal(\Q_\ell^{\rm unr}/K_\ell),L_\epsilon/\m_{f_\ell}\bigr)\\
   &\simeq\Hom\bigl(\Z/p\Z,L_\epsilon/\m_{f_\ell}\bigr).
   \end{split} \]
Let $\boldsymbol\mu_p$ be the group of $p$-th roots of unity in $\bar\Q_\ell$. To study the term $H^1\bigl(K_\ell,(T_\epsilon[p]/\m_{f_\ell})/M\bigr)$ in sequence
\eqref{ex-seq-1}, first recall that $T_\epsilon$ is isomorphic to $\mathbb G_m\otimes H_\epsilon$, so $T_\epsilon[p]$ is isomorphic to $\boldsymbol\mu_p\otimes H_\epsilon$ as a $G_{K_\ell}$-module. Since the structure of $\T_{M\ell}$-module on $T_\epsilon$ is given by the Hecke action on $H_\epsilon$, there is an isomorphism
\[ T_\epsilon[p]/\m_{f_\ell}\simeq\boldsymbol\mu_p\otimes(H_\epsilon/\m_{f_\ell}). \]
Furthermore, it can be easily seen that there exists a submodule $N$ of $H_\epsilon/\m_{f_\ell}$ such that the $\T_{M\ell}/\m_{f_\ell}$-module $(T_\epsilon[p]/\m_{f_\ell})/M$ is isomorphic to $\boldsymbol\mu_p\otimes\big((H_\epsilon/\m_{f_\ell})/N\big)$. Now, the group $G_{K_\ell}$ acts trivially on $H_\epsilon$ and, as a consequence of Hilbert's Theorem 90, the group $H^1(K_\ell,\boldsymbol\mu_p)$ is isomorphic to $K_\ell^\times/(K_\ell^\times)^p$. Since $p\nmid\ell^2-1$, the quotient $K_\ell^\times/(K_\ell^\times)^p$ is isomorphic to $\Z/p\Z$. We conclude that there are group isomorphisms
\[ H^1\bigl(K_\ell,(T_\epsilon[p]/\m_{f_\ell})/M\bigr)\simeq(H_\epsilon/\m_{f_\ell})/N\otimes\Z/p\Z\simeq(H_\epsilon/\m_{f_\ell})/N, \]
the second one being a consequence of the fact that $p\in\m_{f_\ell}$.

The connecting map in \eqref{ex-seq-1}, which under the above identifications can be rewritten as $L_\epsilon/\m_{f_\ell}\rightarrow(H_\epsilon/\m_{f_\ell})/N$, can be explicitly computed as follows. Let $\overline{\ker}(\pi_{\ast,\epsilon})$ be the projection of $\ker(\pi_{\ast,\epsilon})$ to $H_\epsilon$. As above, one has
\[ H^1(K_\ell,T_\epsilon[p])\simeq H^1(K_\ell,\boldsymbol\mu_p)\otimes H_\epsilon\simeq K_\ell^\times/(K_\ell^\times)^p\otimes H_\epsilon\simeq\Z/p\Z\otimes H_\epsilon\simeq H_\epsilon/p, \]
and the connecting homomorphism $L_\epsilon/p\rightarrow H^1(K_\ell,T_\epsilon[p])$ which arises by taking the $G_{K_\ell}$-cohomology of sequence \eqref{ex-seq-0} can be rewritten as $L_\epsilon/p\rightarrow H_\epsilon/p$ and is induced by composing the natural inclusion $L_\epsilon\hookrightarrow T_\epsilon(\Q_\ell)$ with the valuation map
\[ \ord_\ell:T_\epsilon(\Q_\ell)=\Q_\ell^\times\otimes H_\epsilon\xrightarrow{\ord_\ell\otimes{\rm id}}\Z\otimes H_\epsilon=H_\epsilon. \]
Thanks to \cite[Proposition 6.3]{LRV} and the fact that all the maps involved are equivariant for the action of $\tau$, we have $\ord_\ell(L_{\epsilon})=t_r\bigl(\overline{\ker}(\pi_{\ast,\epsilon})\bigr)$ where $t_r:=T_r-r-1$.

Since the Galois action commutes with the Hecke action, it follows that the image of the connecting homomorphism $L_\epsilon/\m_{f_\ell}\rightarrow(H_\epsilon/\m_{f_\ell})/N$ is $t_r\bigl(\overline{\ker}(\pi_{\ast,\epsilon})/\m_{f_\ell}\bigr)$. The endomorphism $t_r$ of $\overline{\ker}(\pi_{\ast,\epsilon})/\m_{f_\ell}$ is just multiplication by the reduction modulo $p$ of $a_r-(r+1)$, which is an isomorphism because $p\nmid a_r-(r+1)$ by Assumption \ref{ass}. Hence $t_r$
takes $\overline{\ker}(\pi_{\ast,\epsilon})/\m_{f_\ell}$ isomorphically onto its image and induces an isomorphism
\[ (H_\epsilon/\m_{f_\ell})\big/\bigl(\overline{\ker}(\pi_{\ast,\epsilon})/\m_{f_\ell}\bigr)\overset\simeq\longrightarrow(H_\epsilon/\m_{f_\ell})\big/t_r\bigl(\overline{\ker}(\pi_{\ast,\epsilon})/\m_{f_\ell}\bigr). \]
Now recall that, by definition, $\Phi_{\ell,\epsilon}:={\rm coker}(f_\epsilon^\ast)/\ker(\pi_{\ast,\epsilon})$, so $\Phi_{\ell,\epsilon}/\m_{f_\ell}$ is isomorphic to the quotient of ${\rm coker}(f_\epsilon^\ast)/\m_{f_\ell}$ by the image of $\ker(\pi_{\ast,\epsilon})/\m_{f_\ell}$. This last quotient maps surjectively onto $(H_\epsilon/\m_{f_\ell})\big/\bigl(\overline{\ker}(\pi_{\ast,\epsilon})/\m_{f_\ell}\bigr)$ and thus there exists a canonical surjective homomorphism
\[ \Phi_{\ell,\epsilon}/\m_{f_\ell}\;\longepi\;(H_\epsilon/\m_{f_\ell})\big/t_r\bigl(\overline{\ker}(\pi_{\ast,\epsilon})/\m_{f_\ell}\bigr). \]
The exact sequence of $\T_{M\ell}/\m_{f_\ell}$-modules \eqref{ex-seq-1} can therefore be rewritten as
\begin{equation} \label{ex-seq-2}
0\longrightarrow\Psi\longrightarrow H^1\bigl(K_\ell,J_\epsilon^{(\ell)}[p]/\m_{f_\ell}\bigr)\longrightarrow\Hom_\text{cont}\bigl(\Gal(\Q_\ell^{\rm unr}/K_\ell),L_\epsilon/\m_{f_\ell}\bigr)
\end{equation}
where $\Psi$ is a suitable quotient of $\Phi_{\ell,\epsilon}/\m_{f_\ell}$.

\begin{theorem} \label{teo-rep}
\begin{enumerate}
\item The $G_\Q$-modules $J_\epsilon^{(\ell)}[p]/\m_{f_\ell}$ and $E[p]$ are isomorphic.
\item The groups $\Phi_{\ell,\epsilon}/\m_{f_\ell}$ and $H^1_{\rm sing}(K_\ell,E[p])$ are isomorphic.
\item Exact sequence \eqref{ex-seq-2} can be rewritten as
\[ 0\longrightarrow\Phi_{\ell,\epsilon}/\m_{f_\ell}\longrightarrow H^1(K_\ell,E[p])\longrightarrow\Hom_{\rm cont}\bigl(\Gal(\Q_\ell^{\rm unr}/K_\ell),L_\epsilon/\m_{f_\ell}\bigr). \]
\end{enumerate}
\end{theorem}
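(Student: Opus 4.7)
The three parts will be proved in order, with the main substance lying in parts (1) and (2). For part (1), I use Conjecture \ref{rigid-analytic-conjecture} to view $J_\epsilon^{(\ell)}$ as a Hecke-equivariant abelian variety over $\Q$, so that $J_\epsilon^{(\ell)}[p]/\m_{f_\ell}$ is naturally a $G_\Q$-module. Theorem \ref{cong-thm} identifies the Hecke eigensystem on $\m_{f_\ell}$ modulo $p$ with the one attached to $E$, so via the Eichler--Shimura congruence (transferred to the quaternionic setting by the Jacquet--Langlands correspondence) the characteristic polynomial of $\Frob_q$ on $J_\epsilon^{(\ell)}[p]/\m_{f_\ell}$ agrees with that on $E[p]$ for every prime $q \nmid Np\ell$. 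Chebotarev density together with Brauer--Nesbitt then gives an isomorphism of semisimplifications. Since $\rho_{E,p}$ is surjective by Assumption \ref{ass}, the module $E[p]$ is absolutely irreducible, hence $\m_{f_\ell}$ is non-Eisenstein. Standard multiplicity-one results for non-Eisenstein maximal ideals acting on $J_0^D(M\ell)^{\text{$\ell$-new}}$, transferred to $J_\epsilon^{(\ell)}$ through the Hecke-equivariant isogeny coming from Conjecture \ref{rigid-analytic-conjecture} and Theorem \ref{GreenbergConj2}, force $J_\epsilon^{(\ell)}[p]/\m_{f_\ell}$ to be two-dimensional over $\F_p$, whence $J_\epsilon^{(\ell)}[p]/\m_{f_\ell} \simeq E[p]$.

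For part (2), I combine a dimension count with an analysis of $\mu_p$-classes. Theorem \ref{cong-thm} gives $\Phi_{\ell,\epsilon}/\m_{f_\ell} \simeq \F_p$ and Lemma \ref{local-iso-lemma} gives $H^1_{\rm sing}(K_\ell, E[p]) \simeq \F_p$. The admissibility condition $p \mid (\ell+1)^2 - a_\ell^2$, combined with $p \nmid \ell^2 - 1$, makes the characteristic polynomial of $\Frob_\ell^2$ on $E[p]$ have $1$ as a simple eigenvalue mod $p$, so $\dim_{\F_p} H^0(K_\ell, E[p]) = 1$ and the local Euler--Poincar\'e formula yields $\dim_{\F_p} H^1(K_\ell, E[p]) = 2$. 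Since $L_\epsilon/\m_{f_\ell} \simeq \F_p$, the right-hand term of \eqref{ex-seq-2} is isomorphic to $\Hom_{\rm cont}(\hat{\Z}, \F_p) \simeq \F_p$; exactness then forces $\dim_{\F_p}\Psi \geq 1$, and since $\Psi$ is a quotient of $\Phi_{\ell,\epsilon}/\m_{f_\ell} \simeq \F_p$, the surjection $\Phi_{\ell,\epsilon}/\m_{f_\ell} \twoheadrightarrow \Psi$ is an isomorphism. It remains to show that $\Psi$ sits inside $H^1_{\rm sing}(K_\ell, E[p])$. By construction $\Psi$ is the image of classes coming from the torus quotient $T_\epsilon[p]/\m_{f_\ell}$, which is a quotient of $\mu_p \otimes H_\epsilon$. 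Any non-zero class in $H^1(K_\ell, \mu_p) \simeq K_\ell^\times/(K_\ell^\times)^p$ is represented by the totally ramified Kummer extension $K_\ell\bigl(\ell^{1/p}\bigr)$ and therefore has non-trivial restriction to inertia; passing through the injection $\mu_p \hookrightarrow E[p]$ supplied by part (1) gives a non-trivial class in $\Hom(I_{K_\ell}, E[p])$, which shows that $\Psi \cap H^1_{\rm fin}(K_\ell, E[p]) = 0$. Since both $\Psi$ and $H^1_{\rm sing}(K_\ell, E[p])$ are one-dimensional, the induced projection is the sought isomorphism.

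Part (3) then follows at once by substituting the isomorphisms from parts (1) and (2) into sequence \eqref{ex-seq-2}. I expect the main obstacle to lie in part (1): verifying that the multiplicity-one property of $J_0^D(M\ell)^{\text{$\ell$-new}}$ transfers to the conjectural abelian variety $J_\epsilon^{(\ell)}$ requires essential use of the Hecke-equivariance implicit in Conjecture \ref{rigid-analytic-conjecture}, since without this input one can only pin down the semisimplification of $J_\epsilon^{(\ell)}[p]/\m_{f_\ell}$, rather than the module itself.
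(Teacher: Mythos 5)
There is a genuine gap in your treatment of part (1), and it is precisely the issue you flag at the end but do not resolve. You appeal to ``standard multiplicity-one results for non-Eisenstein maximal ideals acting on $J_0^D(M\ell)^{\text{$\ell$-new}}$,'' but multiplicity one is \emph{not} standard for Jacobians of Shimura curves attached to division quaternion algebras: Ribet and others have exhibited failures of multiplicity one for such Jacobians even at non-Eisenstein maximal ideals, and the Mazur-style arguments from the classical modular curve setting do not transfer. Moreover, even if multiplicity one held for $J_0^D(M\ell)^{\text{$\ell$-new}}[\m]$, you would still need to know that the isogeny of Conjecture \ref{rigid-analytic-conjecture} and \eqref{T/L-isogeny-eq} induces an isomorphism on $\m_{f_\ell}$-torsion, and nothing in the paper rules out $p$ from dividing the degree of that isogeny. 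So part (1) cannot be isolated and proved first in the way you propose.

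The paper's argument is structurally different and sidesteps this entirely. From the result of Boston--Lenstra--Ribet together with the Eichler--Shimura relations (and the absolute irreducibility of $E[p]$ guaranteed by Assumption \ref{ass}), one obtains only that $J_\epsilon^{(\ell)}[p]/\m_{f_\ell} \simeq E[p]^{\oplus h}$ for some unknown $h \geq 1$. The value $h=1$ is then \emph{extracted from} the exact sequence \eqref{ex-seq-2}: since $H^1_{\rm sing}(K_\ell,E[p])$ has trivial image in the $\Hom$-term, the subspace $H^1_{\rm sing}(K_\ell,E[p]^{\oplus h})$, of dimension $h$, must land in $\Psi$; but $\dim_{\F_p}\Psi \leq \dim_{\F_p}(\Phi_{\ell,\epsilon}/\m_{f_\ell}) = 1$ by Theorem \ref{cong-thm}, forcing $h=1$ and simultaneously yielding $\Psi \simeq \Phi_{\ell,\epsilon}/\m_{f_\ell} \simeq H^1_{\rm sing}(K_\ell,E[p])$. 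Thus all three parts are established in one stroke, with no appeal to multiplicity one. Your part (2), read on its own, is close in spirit to this dimension count, and your observation that classes arising from $T_\epsilon[p]/\m_{f_\ell}$ are ramified (so $\Psi \cap H^1_{\rm fin}(K_\ell,E[p]) = 0$) is a valid complementary way to see that $\Psi$ is identified with the singular quotient; the paper argues instead that $H^1_{\rm sing}$ maps to zero in $\Hom_{\rm cont}\bigl(\Gal(\Q_\ell^{\rm unr}/K_\ell),L_\epsilon/\m_{f_\ell}\bigr)$. But that portion of your argument presupposes part (1) has already been secured, which it has not: the multiplicity-one input needs to be replaced by the $\dim\Psi\leq 1$ constraint to close the loop.
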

\begin{proof} By \cite{BLR} and the Eichler--Shimura relations, the quotient $J_\epsilon^{(\ell)}[p]/\m_{f_\ell }$ is isomorphic as a $G_\Q$-module to the direct sum of $h\geq1$ copies of $E[p]$. By \cite[Lemma 2.6]{BD1}, the $\F_p$-vector space $H^1(K_\ell,E[p])$ has dimension $2$ and can be (non-canonically) decomposed into a sum
\[ H^1(K_\ell,E[p])=H^1_{\rm fin}(K_\ell,E[p])\oplus H^1_{\rm sing}(K_\ell,E[p]) \]
of one-dimensional subspaces. The image of $H^1_{\rm sing}(K_\ell,E[p])$ in the group of continuous homomorphisms in exact sequence \eqref{ex-seq-2} is trivial. Since
$\dim_{\F_p}(\Psi)\leq\dim_{\F_p}(\Phi_{\ell,\epsilon}/\m_{f_\ell})$ and $\dim_{\F_p}(\Phi_{\ell,\epsilon}/\m_{f_\ell})=1$ by the last claim of Theorem \ref{cong-thm}, we conclude that $h=1$ and
\[ \Psi\simeq\Phi_{\ell,\epsilon}/\m_{f_\ell}\simeq H^1_{\rm sing}(K_\ell,E[p]), \]
from which all the statements follow. \end{proof}
In light of Theorem \ref{teo-rep}, from here on we fix an isomorphism
\begin{equation} \label{torsion-isom-eq}
J_\epsilon^{(\ell)}[p]/\m_{f_\ell}\simeq E[p]
\end{equation}
of $G_\Q$-modules and an isomorphism
\begin{equation} \label{Phi-sing-isom-eq}
\Phi_{\ell,\epsilon}/\m_{f_\ell}\simeq H^1_{\rm sing}(K_\ell,E[p])
\end{equation}
of $\F_p$-vector spaces.

\section{Gross--Zagier type formula and Darmon points} \label{formula-sec}

In this section assume that $D>1$. Fix throughout an admissible prime $\ell$, set
\[ \Gamma:=\Gamma_\ell \]
for the Ihara group at $\ell$ and denote by $t$ the exponent $t_\ell$ of $\Gamma^{\rm ab}$. Building on the arguments and constructions of \cite{LRV}, in this section we prove our Gross--Zagier type formula (Theorem \ref{rec-law}) relating the class modulo $p$ of $\cl L_K(E,\chi,1)$ to a certain twisted sum of Darmon points. This is a generalization to the case of division quaternion algebras and arbitrary characters of the formula proved in \cite[Theorem 3.9]{BDD}. In fact, a suitable extension of the arguments with modular symbols and specializations of Stark--Heegner points described in \cite[\S 3.3]{BDD} yields the analogue of Theorem \ref{rec-law} in the $D=1$ setting.

\subsection{Auxiliary results}

Recall from \S \ref{sec-galois-rep} and the proof of Theorem \ref{teo-rep} that the cokernel of the map arising from the composition of the inclusion $L_\epsilon\subset T_\epsilon(\Q_\ell)$, the valuation map $\ord_\ell:T_\epsilon(\Q_\ell)\rightarrow H_\epsilon$ and the projection $H_\e\twoheadrightarrow H_\e/\m_{f_\ell}$, which is denoted by $\Psi$ in \eqref{ex-seq-2}, is a non-trivial $\F_p$-vector space isomorphic to $\Phi_{\ell,\epsilon}/\m_{f_\ell}$. For any unramified extension $W/K_\ell$ denote by
\begin{equation} \label{partial-eq}
\partial_\ell:J_\epsilon^{(\ell)}(W)\longrightarrow \Phi_{\ell,\epsilon}/\m_{f_\ell}
\end{equation}
the map that is obtained by composing the inverse of isomorphism \eqref{rigid-isom-eq} with the valuation map $\ord_\ell:T_\epsilon(W)/L_\epsilon\rightarrow H_\epsilon/\ord_\ell(L_\epsilon)$, the canonical projection to $\Psi$ and the isomorphism of this $\F_p$-vector space with $\Phi_{\ell,\e}/\m_{f_\ell}$.

Let
\[ r:\mathcal H_\ell\longrightarrow \mathcal T \]
denote the $\GL_2(\Q_\ell)$-equivariant reduction map (see, e.g., \cite[\S 5.1]{Dar}) and fix a base point $\tau\in K_\ell-\Q_\ell$ such that $r(\tau)=v_*$. Let $\g_1\in\Gamma$ and let $\{e_0,\dots,e_n\}$ be a set of edges $e_i\in\cl E^+$ such that
\begin{itemize}
\item $s(e_1)=v_\ast$, $s(e_n)=\gamma_1(v_\ast)=:v_n$;
\item $t(e_i)=t(e_{i+1})=:v_i$ for \emph{odd} indices in $\{1,\dots,n-1\}$;
\item $s(e_i)=s(e_{i+1})=:v_i$ for \emph{even} indices in $\{2,\dots,n-2\}$.
\end{itemize}
Notice that, in the above, the integer $n$ is always even. If $\g_2\in\Gamma$ then, by \cite[Proposition 5.2]{LRV}, there is an equality
\begin{equation} \label{int-eq}
\ord_\ell\bigg(\mint_{\PP^1(\Q_\ell)}\frac{t-\gamma_1^{-1}(\tau)}{t-\tau}d\mu_{\gamma_2}^{\mathcal{Y}}(t)\bigg)=\sum_{i=0}^n(-1)^i\mu_{\g_2}^{\mathcal{Y}}(e_i)
\end{equation}
of elements in $H$, where $\mu^{\mathcal Y}$ is the cocycle introduced in Definition \ref{mu}.

\begin{remarkwr}
In the following we adopt the identification $H_1\bigl(\Ga,\Z_S\bigr)=\Ga^{\rm ab}\otimes\Z_S$ and write $[\g]$ for the natural image in $H_1\bigl(\Ga,\Z_S\bigr)$ of an element $\g\in\Ga$.
\end{remarkwr}

Now we introduce the $1$-cocycle
\[ \tilde{m}^{\mathcal Y}\in Z^1\Big(\G,\cF\bigl(\V,H_1\bigl(\Ga,\Z_S\bigr)\bigr)\Big) \]
defined by the rule
\[ \tilde{m}_{\g}^{\mathcal Y}(v):=[g_{\g,v}] \]
where $g_{\g,v}\in\Ga$ is given by the formula
\[ \quad g_{\g,v}:=\begin{cases}\g_v\g\g^{-1}_{\g^{-1}(v)} & \text{if $v\in \V^+$}\\[2mm]
                   \om_{\ell}^{-1}\g_v\g\g^{-1}_{\g^{-1}(v)}\om_\ell & \text{if $v\in \V^-$.}
                   \end{cases} \]
Note that $\g_v\g\g^{-1}_{\g^{-1}(v)}$ stabilizes $v_\ast$ (respectively, $\hat{v}_\ast$), and thus lies in $\Ga$ (respectively, in $\hGa$), if $v\in\V^+$ (respectively, $v\in\V^-$). Hence $g_{\g,v}$ always lies in $\Ga$. We leave it to the reader to check that $\tilde{m}^{\mathcal Y}$ is a well-defined cocycle;
see Definition \ref{mu} and \cite[\S 4]{LRV} for a similar construction.

Consider the composition
\[ {\rm pr}_1: H_1\bigl(X_0^D(M),\Z_S\bigr)^2\twoheadrightarrow H_1\bigl(X_0^D(M),\Z_S\bigr)^2\big/\m_f'\twoheadrightarrow{\rm coker}(\bar\pi_*\circ\bar\pi^*)\twoheadrightarrow\Phi_{\ell,\epsilon}/\mathfrak m_{f_\ell}\simeq\Z/p\Z \]
where the first two maps are the canonical projections, the third is induced by Proposition \ref{cong-prop} and the isomorphism is that of \eqref{cong-thm2}. If $e\in\E$ then set
\begin{equation} \label{tildemu}
\tilde{\mu}^{\mathcal Y}_{\g}(e):={\rm pr}_1\bigl(\tilde{m}_\g^{\mathcal Y}(s(e)),\tilde{m}_\g^{\mathcal Y}(t(e))\bigr).
\end{equation}
Similarly, define also the composition
\[ {\rm pr}_2:H_1\bigl(X_0^D(M),\Z_S\bigr)\twoheadrightarrow H_1\bigl(X_0^D(M),\Z_S\bigr)\big/\m_f\simeq{\rm coker}(\bar\pi_*\circ\bar\pi^*)\twoheadrightarrow\Phi_{\ell,\epsilon}/\mathfrak m_{f_\ell}\simeq\Z/p\Z \]
where the first isomorphism is \eqref{eq-cong-2}. Recall from condition $5$ in Assumption \ref{ass} that there exists $\delta\in\{\pm1\}$ such that $p|a_{\ell}+\delta (\ell +1)$. The isomorphism in \eqref{eq-cong-1} is induced by the map $(x,y)\mapsto(\ell+1)x-T_\ell(y)$; since $p|a_\ell-\delta(\ell+1)$, this map is just $(x,y)\mapsto(\ell+1)(x-\delta y)$ from $H_1\bigl(X_0^D(M),\Z_S\bigr)^2\big/\m_f'$ to $H_1\bigl(X_0^D(M),\Z_S\bigr)\big/\m_f$. It follows that
\begin{equation} \label{note}
\tilde{\mu}^{\mathcal Y}_\g(e)=(\ell+1){\rm pr}_2\bigl(\tilde{m}_\g^{\mathcal Y}(t(e))-\delta\tilde{m}_\g^{\mathcal Y}(s(e))\bigr).
\end{equation}
We thus obtain that $\tilde{\mu}^{\mathcal Y}$ is also well defined with values in $\cF_0(\E,\Z/p\Z)$.

Finally, introduce the map
\[ {\rm pr}_3:H_1\bigl(X^D_0(M\ell),\Z_S\bigr)\;\longepi\;{\rm coker}(\bar\pi_*\circ\bar\pi^*)\;\longepi\;\Phi_{\ell,\epsilon}/\mathfrak m_{f_\ell}\simeq\Z/p\Z \]
where the first arrow is the composition of the canonical projection
\[ H_1\bigl(X^D_0(M\ell),\Z_S\bigr)\;\longepi\;\bigl(H_1(X^D_0(M\ell),\Z_S)/\m_f'\bigr)\big/\langle\ker(\bar\pi_*),{\rm im}(\bar\pi^*)\rangle \]
with isomorphism \eqref{pi}, and define
\[ \bar\mu^{\mathcal Y}:={\rm pr}_3(\mu^{\mathcal Y}). \]

\begin{lemma} \label{Lemma7.1}
$\bar\mu^{\mathcal Y}=\tilde\mu^{\mathcal Y}$.
\end{lemma}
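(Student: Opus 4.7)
The plan is to unwind the definitions and directly compute both sides. First, I observe that the isomorphism \eqref{pi} used to construct ${\rm pr}_3$ factors through $\bar\pi_*$, so that ${\rm pr}_3 = {\rm pr}_1 \circ \pi_*$ as maps $H_1(X_0^D(M\ell),\Z_S) \to \Z/p\Z$. Since $\mu^{\mathcal Y}_\gamma(U_e) = [g_{\gamma,e}]$ for $e \in \mathcal E^+$, the identity $\bar\mu^{\mathcal Y}_\gamma(e) = \tilde\mu^{\mathcal Y}_\gamma(e)$ reduces to the claim
\[ {\rm pr}_1\bigl(\pi_{1,*}[g_{\gamma,e}],\,\pi_{2,*}[g_{\gamma,e}]\bigr) = {\rm pr}_1\bigl(\tilde m^{\mathcal Y}_\gamma(s(e)),\,\tilde m^{\mathcal Y}_\gamma(t(e))\bigr). \]

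Next I would compute the two pushforwards explicitly, exploiting the structural property of $\mathcal Y$ that $\gamma_e = g_{i_e}\gamma_{s(e)}$ when $s(e) \in \mathcal V^+$ and $\gamma_e = \hat g_{k_e}\gamma_{t(e)}$ when $t(e) \in \mathcal V^-$, together with the analogous decompositions for $\gamma^{-1}(e)$. Using the identity $\omega_\ell^{-1}\hat g_k \omega_\ell = g_k$ that follows from the definition $\hat g_k = \omega_\ell g_k \omega_\ell^{-1}$, these translate into the multiplicative factorisations $g_{\gamma,e} = g_{i_e}\,g_{\gamma,s(e)}\,g_{j_e}^{-1}$ and $\omega_\ell^{-1}g_{\gamma,e}\omega_\ell = g_{k_e}\,g_{\gamma,t(e)}\,g_{l_e}^{-1}$ inside $\Gamma_0^D(M)$, whence
\[ \pi_{1,*}[g_{\gamma,e}] = [g_{\gamma,s(e)}] + [g_{i_e}] - [g_{j_e}],\qquad \pi_{2,*}[g_{\gamma,e}] = [g_{\gamma,t(e)}] + [g_{k_e}] - [g_{l_e}] \]
in $\Gamma_0^D(M)^{\rm ab}$.

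The final and most delicate step is to show that the error pair $\bigl([g_{i_e}]-[g_{j_e}],\,[g_{k_e}]-[g_{l_e}]\bigr)$ lies in $\ker({\rm pr}_1)$. By \eqref{note}, this is equivalent to showing
\[ (\ell+1)\cdot{\rm pr}_2\bigl([g_{k_e}]-[g_{l_e}]-\delta([g_{i_e}]-[g_{j_e}])\bigr) = 0\quad\text{in }\Z/p\Z, \]
which, since $p\nmid\ell+1$ by admissibility, reduces to the vanishing of the ${\rm pr}_2$-image. I expect this to follow by exhibiting the correction terms as coming from a class in ${\rm im}(\pi^*)\subset H_1(X_0^D(M\ell),\Z_S)$: intuitively, the $g_{i_e}$ and $\hat g_{k_e}$ are coset representatives whose associated cycles pull back from $X_0^D(M)$ via $\pi_1^*$ and $\pi_2^*$, so that after applying $\pi_*$ they become divisible by the Hecke-type relations defining ${\rm coker}(\bar\pi_*\circ\bar\pi^*)$ and hence vanish modulo $\mathfrak m_f'$. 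Carefully tracking this identification, together with the sign conventions in the isomorphism \eqref{eq-cong-1}, is the principal technical obstacle.
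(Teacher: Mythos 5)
Your proposal is structurally the same as the paper's proof, but you have unpacked one step that the paper treats as immediate, and in doing so you correctly identify where the real content lies. The paper's argument has two ingredients: (i) the observation (your ``${\rm pr}_3={\rm pr}_1\circ\pi_*$'', phrased in the paper as a commutative triangle whose vertical arrow is induced by $\gamma\mapsto(\gamma,\om_\ell^{-1}\gamma\om_\ell)$) and (ii) the assertion that $\tilde\mu^{\mathcal Y}_\gamma(e)={\rm pr}_1\bigl([g_{\gamma,e}],[\om_\ell^{-1}g_{\gamma,e}\om_\ell]\bigr)$, which the paper declares to follow ``by \eqref{tildemu}'' and leaves at that. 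You reproduce (i) exactly, and for (ii) you carry out the honest computation: from $\gamma_e=g_{i_e}\gamma_{s(e)}$ and $\gamma_{\gamma^{-1}(e)}=g_{j_e}\gamma_{\gamma^{-1}(s(e))}$ one gets $\pi_{1,*}[g_{\gamma,e}]=\tilde m^{\mathcal Y}_\gamma(s(e))+[g_{i_e}]-[g_{j_e}]$, and similarly for $\pi_{2,*}$ using $\om_\ell^{-1}\hat g_k\om_\ell=g_k$. This is correct, and it exposes that the identity in (ii) is not a tautology: one must check that the error pair dies under ${\rm pr}_1$. The paper's terse ``by \eqref{tildemu}'' silently subsumes precisely this check, so your bookkeeping is not a detour but a useful expansion of what the published proof leaves implicit.

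Where your write-up is genuinely incomplete is the final step: you reduce the matter to showing ${\rm pr}_2\bigl([g_{k_e}]-[g_{l_e}]-\delta([g_{i_e}]-[g_{j_e}])\bigr)=0$ in $\Z/p\Z$ (using $p\nmid\ell+1$), and you gesture at exhibiting the correction as coming from ${\rm im}(\pi^*)$ and the Hecke relation defining ${\rm coker}(\bar\pi_*\circ\bar\pi^*)$, but you do not actually produce the class or carry out the cancellation. As things stand this is an intuition, not an argument. Note also that this is exactly the kind of verification the paper treats as routine (cf.\ the remark right after \eqref{tildemu} that $\tilde\mu^{\mathcal Y}$ ``is well defined with values in $\cF_0(\E,\Z/p\Z)$''), and it is the same order of difficulty as the ``left to the reader'' well-definedness of $\tilde m^{\mathcal Y}$; so to match the paper you should either complete the reduction-to-${\rm im}(\pi^*)$ you sketch, or else note that the two sides agree on special edges $e_0$ (where the error pair collapses) and then use the cocycle/harmonicity relations to propagate the identity. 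Until that last step is written out, your proof is a faithful but unfinished version of the paper's.
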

\begin{proof} Fix $\g\in\G$ and $e\in\E^+$ and let $g_{\g,e}\in\Gap$ be such that $\g_e\g=g_{\g,e}\g_{e'}$ for some $e'\in\E^+$. By Definition \ref{mu}, one has
\[ \bar\mu^{\mathcal Y}_\g(e)={\rm pr}_3\bigl([g_{\g,e}]\bigr), \]
while by \eqref{tildemu} there is an equality
\[ \tilde\mu^\mathcal{Y}_\g(e)={\rm pr}_1\bigl([g_{\g,e}],[\om_\ell^{-1}g_{\g,e}\om_\ell]\bigr). \]
By construction, there is a commutative triangle
\[ \xymatrix@C=25pt@R=20pt{H_1\bigl(X^D_0(M\ell),\Z_S\bigr)\ar[d]\ar@{->>}[r]&\mathrm{coker}(\bar\pi_*\circ\bar\pi^*)\\
                           H_1\bigl(X_0^D(M),\Z_S\bigr)^2\ar@{->>}[ur]&} \]
where the vertical arrow is induced by the map $\Gap\rightarrow\Ga^2$ taking $\g$ to $(\g,\om_\ell^{-1}\g \om_\ell)$ via the canonical projections and the other two maps are the surjections already appearing in the definitions of $\mathrm{pr}_1$ and $\mathrm{pr}_3$. This shows the required equality for even edges, and the analogous equality for odd edges follows similarly. \end{proof}

Let us denote by $\partial'_\ell$ the composition of the map $\partial_\ell$ in \eqref{partial-eq} with the isomorphism \eqref{cong-thm2} between $\Phi_{\ell,\e}/\m_{f_\ell}$ and $\Z/p\Z$. Fix now $\psi\in\Emb(\cO,R)$ and choose $\tau:=z_\psi$, where, as in \S \ref{SH-subsection}, $z_\psi\in\cl H_\ell\cap K_\ell$ is the (unique) point such that $\psi(\alpha)\binom{z_\psi}{1}=\alpha\binom{z_\psi}{1}$ for all $\alpha\in K$. Let us also write $d_\epsilon$ for the composition of the $2$-cocycle $d=d_\tau$ introduced in \eqref{desc-d} with the map $T(K_\ell)\rightarrow J_\epsilon^{(\ell)}(K_\ell)$ defined in the obvious way. Similarly, if $\beta$ is as in \eqref{def-beta} then let $\beta_\epsilon:\Gamma\rightarrow J_\epsilon^{(\ell)}(K_\ell)$ be the induced map. Observe that, with this notation in force, Definition \ref{SH-J-dfn} reads
\begin{equation} \label{darmon-dfn}
P_\psi^\epsilon:=t\cdot\beta_\epsilon(\gamma_\psi)\in J_\epsilon^{(\ell)}(K_\ell).
\end{equation}
It is worthwhile to explicitly remark that in this section we view the Darmon points $P_\psi^\epsilon$ as rational over the local field $K_\ell$. In fact, the Gross--Zagier type results we are about to prove are of a genuinely local nature, so we do not need to assume that the points we work with are global, as predicted by Conjecture \ref{SH-conjecture}.

From \eqref{int-eq} and Lemma \ref{Lemma7.1} we obtain equalities
\begin{equation} \label{partial'}
\partial_\ell'\bigl(d_\epsilon(\gamma_1,\gamma_2)\bigr)=\sum_{i=0}^n(-1)^i\bar\mu_{\g_2}^{\mathcal Y}(e_i)=\sum_{i=0}^n(-1)^i\tilde\mu_{\g_2}^{\mathcal Y}(e_i),
\end{equation}
with the edges $e_i$ being defined as for equality \eqref{int-eq}; namely, the $e_i\in\cl E^+$ satisfy
\begin{itemize}
\item $s(e_1)=v_\ast$, $s(e_n)=\gamma_1^{-1}(v_\ast)=:v_n$;
\item $t(e_i)=t(e_{i+1})=:v_i$ for \emph{odd} indices in $\{1,\dots,n-1\}$;
\item $s(e_i)=s(e_{i+1})=:v_i$ for \emph{even} indices in $\{2,\dots,n-2\}$.
\end{itemize}
Define a function $\alpha_\tau:\Gamma\rightarrow \Z/p\Z$ by setting
\[ \alpha_\tau(\g):=-(\ell+1){\rm pr}_2\bigl(\tilde m_\g^{\mathcal Y}(v_*)\bigr). \]
Observe that, by definition, $\alpha_\tau=\alpha_{\tau'}$ for all $\tau'$ with $r(\tau')=v_*$. Recall the element $\gamma_\psi\in\Gamma_0^D(M)$ attached to the embedding $\psi$ as in \S \ref{SH-subsection}.

\begin{lemma} \label{lemma-partial}
Suppose $\delta=-1$. The equality
\[ \partial'_\ell(P_\psi^\epsilon)=t\cdot\alpha_\tau(\g_\psi) \]
holds in $\Z/p\Z$.
\end{lemma}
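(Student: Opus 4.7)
My plan is to show that the two $1$-cochains $\partial'_\ell\circ\beta_\epsilon$ and $\alpha_\tau$ from $\Gamma$ to $\Z/p\Z$ share the same coboundary, so that their difference is a group homomorphism $\Gamma\to\Z/p\Z$. Since any such homomorphism factors through $\Gamma^{\rm ab}$ and $t$ is by definition the exponent of $\Gamma^{\rm ab}$, multiplication by $t$ will annihilate this difference, and evaluation at $\gamma_\psi$ will produce the desired equality.

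First I will exploit the defining relation $\beta_{\gamma_1\gamma_2}\cdot\beta_{\gamma_1}^{-1}\cdot\beta_{\gamma_2}^{-1}\equiv d_{\gamma_1,\gamma_2}\pmod L$. The map $\partial'_\ell$ is the restriction to $T_\epsilon(K_\ell)/L_\epsilon$ of an additive map $\tilde\partial'_\ell\colon T(K_\ell)\to\Z/p\Z$ obtained by composing $\mathrm{ord}_\ell$ with the projection to $\Phi_{\ell,\epsilon}/\m_{f_\ell}\simeq\Z/p\Z$, and it annihilates $L$. Projecting $\beta$ to $T_\epsilon/L_\epsilon$ and composing with $\partial'_\ell$ therefore yields $\delta(\partial'_\ell\circ\beta_\epsilon)=\partial'_\ell\circ d_\epsilon$ in $C^2(\Gamma,\Z/p\Z)$. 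Since $P_\psi^\epsilon=t\cdot\beta_\epsilon(\gamma_\psi)$ by \eqref{darmon-dfn} and $\partial'_\ell$ is $\F_p$-linear, the lemma reduces to the identity $t\cdot(\partial'_\ell\circ\beta_\epsilon)(\gamma_\psi)=t\cdot\alpha_\tau(\gamma_\psi)$ in $\Z/p\Z$.

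The technical heart of the argument is the identity $\delta\alpha_\tau=\partial'_\ell\circ d_\epsilon$. The right-hand side is supplied by \eqref{partial'} as the alternating sum $\sum_{i}(-1)^i\tilde\mu^{\mathcal Y}_{\gamma_2}(e_i)$ along a geodesic in the Bruhat--Tits tree joining $v_\ast$ to $\gamma_1^{-1}(v_\ast)$; each summand I will expand via \eqref{note} with the sign $\delta=-1$ as $(\ell+1)\,\mathrm{pr}_2\bigl(\tilde m^{\mathcal Y}_{\gamma_2}(t(e_i))+\tilde m^{\mathcal Y}_{\gamma_2}(s(e_i))\bigr)$. Thanks to the alternating endpoint-sharing structure of the geodesic (odd indices share targets, even indices share sources), the contributions at intermediate vertices cancel in pairs and the alternating sum telescopes to an expression supported at the two endpoints $v_\ast$ and $\gamma_1^{-1}(v_\ast)$. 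On the other side, the $1$-cocycle relation $\tilde m^{\mathcal Y}_{\gamma_1\gamma_2}(v_\ast)=\tilde m^{\mathcal Y}_{\gamma_1}(v_\ast)+\tilde m^{\mathcal Y}_{\gamma_2}(\gamma_1^{-1}v_\ast)$ gives $(\delta\alpha_\tau)(\gamma_1,\gamma_2)=-(\ell+1)\,\mathrm{pr}_2\bigl(\tilde m^{\mathcal Y}_{\gamma_2}(\gamma_1^{-1}v_\ast)-\tilde m^{\mathcal Y}_{\gamma_2}(v_\ast)\bigr)$, which matches the telescoped expression (any residual sign being absorbed into the choice of the isomorphism $\Phi_{\ell,\epsilon}/\m_{f_\ell}\simeq\Z/p\Z$).

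Once the two coboundaries are shown to agree, $\partial'_\ell\circ\beta_\epsilon-\alpha_\tau$ is a $1$-cocycle, hence a homomorphism $\Gamma\to\Z/p\Z$ that factors through $\Gamma^{\rm ab}$; since $\Gamma^{\rm ab}$ has exponent $t$, multiplication by $t$ kills it, and evaluating at $\gamma_\psi$ yields $\partial'_\ell(P_\psi^\epsilon)=t\cdot\alpha_\tau(\gamma_\psi)$. The main obstacle I anticipate is the telescoping computation of the third paragraph: one must keep careful track of the bipartite orientation of edges in $\mathcal E^+$ and verify that the hypothesis $\delta=-1$ produces precisely the $+$ sign in \eqref{note} that turns the alternating sum into the difference of endpoint terms needed to match a coboundary. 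Under the opposite sign $\delta=+1$ the telescoping takes a different shape, which is consistent with the fact that the $\delta=+1$ case would require a separate analysis with a different auxiliary cochain in place of $\alpha_\tau$.
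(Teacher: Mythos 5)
Your proposal follows the same route as the paper's own proof: showing that both $\partial'_\ell\circ\beta_\epsilon$ and $\alpha_\tau$ split the $2$-cocycle $\partial'_\ell\circ d_\epsilon$ (via the telescoping sum along the geodesic in the Bruhat--Tits tree, the expansion \eqref{note} with $\delta=-1$, and the $1$-cocycle relation for $\tilde m^{\mathcal Y}$), so that their difference is a homomorphism $\Gamma\to\Z/p\Z$ annihilated by multiplication by the exponent $t$ of $\Gamma^{\rm ab}$. One small quibble: your remark that a "residual sign" is "absorbed into the choice of isomorphism $\Phi_{\ell,\epsilon}/\m_{f_\ell}\simeq\Z/p\Z$" is unnecessary hedging --- the telescoping computation in the paper gives the exact coboundary $\alpha_\tau(\gamma_1\gamma_2)-\alpha_\tau(\gamma_1)-\alpha_\tau(\gamma_2)$ with no sign ambiguity once the endpoint conventions are tracked carefully.
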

\begin{proof} Fix $\g_1,\g_2\in\G$ and $e\in\E$. Choose a sequence $\{e_0,\dots,e_n\}$ of even edges joining the vertices $v_*$ and $\g_1^{-1}(v_*)$ as in \eqref{partial'}. Since $\delta=-1$, by \eqref{note} there is an equality
\[ \sum_{i=0}^n(-1)^i\tilde\mu^{\cl Y}_{\g_2}(e_i)=(\ell+1)\sum_{i=0}^n(-1)^i{\rm pr}_2\bigl(\tilde{m}_\g^{\mathcal Y}(t(e))+\tilde{m}_\g^{\mathcal Y}(s(e))\bigr). \]
The terms in the right-hand sum cancel out telescopically and we find that
\begin{equation} \label{eq0}
\sum_{i=0}^n(-1)^i\tilde\mu^{\cl Y}_{\g_2}(e_i)=-(\ell+1){\rm pr}_2\bigl(\tilde m^{\cl Y}_\g(t(e_n))-\tilde m_\g^{\cl Y}(s(e_0))\bigr).
\end{equation}
Observe that
\begin{equation} \label{eq1}
\begin{split}
\tilde m_{\g_1\g_2}^{\cl Y}(v_*)-\tilde m_{\g_1}^{\cl Y}(v_*)&=\bigl[\g_1\g_2\g^{-1}_{\g_2^{-1}\g_1^{-1}(v_*)}\bigr]-\bigl[\g_1\g^{-1}_{\g_1^{-1}(v_*)}\bigr]\\
&=\bigl[\g_{\g_1^{-1}(v_*)}\g_2\g^{-1}_{\g_2^{-1}\g_1^{-1}(v_*)}\bigr]=\tilde m_{\g_2}^{\cl Y}\bigl(\g_1^{-1}(v_*)\bigr).
\end{split}
\end{equation}
Combining \eqref{partial'}, \eqref{eq0} and \eqref{eq1} we obtain
\begin{equation} \label{eq-partial'}
\partial_\ell'\bigl(d_\epsilon(\gamma_1,\gamma_2)\bigr)=\alpha_{\tau}(\g_1\g_2)-\alpha_{\tau}(\g_1)-\alpha_{\tau}(\g_2).
\end{equation}
It is then a consequence of equations \eqref{def-beta} and \eqref{eq-partial'} that both $\partial'_\ell\circ\beta_\epsilon$ and $\alpha_\tau$ split the $2$-cocycle $\partial'_\ell\circ d_\epsilon\in Z^2(\Gamma,\Z/p\Z)$, whence
\begin{equation} \label{darmon-eq}
\partial'_\ell\bigl(t\cdot\beta_\epsilon(\gamma)\bigr)=t\cdot\alpha_\tau(\gamma)
\end{equation}
for all $\gamma\in\Gamma$ because $\partial'_\ell$ is a group homomorphism. In light of \eqref{darmon-dfn}, the claim of the lemma follows upon taking $\gamma=\gamma_\psi$ in equality \eqref{darmon-eq}.\end{proof}

\subsection{A Gross--Zagier formula} \label{reciprocity-subsec}

Recall the set $\{\psi_\sigma\mid\sigma\in G_c\}$ of representatives for the $\Gamma_0^D(M)$-equivalence classes of optimal embeddings of $\cO_c$ into $R$ fixed in \S \ref{oriented-subsec}. For simplicity, set $\tau_\sigma:=z_{\psi_\sigma}$ and $v_\sigma:=r(\tau_\sigma)$ for all $\sigma\in G_c$. Since the reduction map is $\Gamma$-equivariant and $\ell$ is prime to $c$, the stabilizer of $v_\sigma$ in $\GL_2(\Q_\ell)$ coincides with $\GL_2(\Z_\ell)$, hence $v_\sigma=v_*$ for all $\sigma\in G_c$. Define
\begin{equation} \label{tau-chi}
P_\chi^\epsilon:=\sum_{\sigma\in G_c}P_{\psi_\sigma}^\epsilon\otimes\chi^{-1}(\sigma)\in J_\epsilon^{(\ell)}(K_\ell)\otimes\Z[\chi]_S
\end{equation}
and, again to ease the writing, set $\g_\sigma:=\g_{\psi_\sigma}\in\Gamma_0^D(M)$ for all $\sigma\in G_c$.

Let $[\star]$ be the class of the element $\star$ in a quotient group. Now we can prove our Gross--Zagier type formula for the (algebraic part of the) special value $L_K(E,\chi,1)$, which can also be regarded as an explicit reciprocity law in the sense of \cite{BD1}.

\begin{theorem}\label{rec-law}
Suppose $\delta=-1$. Then
\[ (\partial'_\ell\otimes{\rm id})(P_\chi^\epsilon)=t\cdot\bigl[\cl L_K(E,\chi,1)\bigr] \]
in $\Z[\chi]_S/p\Z[\chi]_S$.
\end{theorem}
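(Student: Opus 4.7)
The proof is a direct computation that reduces the local invariant on the left-hand side to the global homology cycle defining $\cl L_K(E,\chi,1)$, combining Lemma \ref{lemma-partial} with the explicit structure of the optimal embeddings $\psi_\sigma$ at the $p$-admissible prime $\ell$.

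By the definition \eqref{tau-chi} of $P_\chi^\epsilon$ and linearity one has
\[ (\partial'_\ell\otimes\mathrm{id})(P_\chi^\epsilon) = \sum_{\sigma\in G_c} \partial'_\ell(P_{\psi_\sigma}^\epsilon)\otimes\chi^{-1}(\sigma) = t\sum_{\sigma\in G_c}\alpha_{\tau_\sigma}(\gamma_\sigma)\otimes\chi^{-1}(\sigma), \]
the second equality by Lemma \ref{lemma-partial}. The main local observation is that for every $\sigma\in G_c$ the element $\gamma_\sigma = \psi_\sigma(\varepsilon_c)\in\Gamma_0^D(M)$ lands in $\mathrm{SL}_2(\Z_\ell)$ under $\iota_\ell$ and therefore fixes the distinguished vertex $v_\ast$. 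Since the stabilizer of $v_\ast$ in $\Gamma_\ell$ is exactly $\Gamma_0^D(M)$ and $v_\ast\in\cl V^+$, we may choose the representative system $\mathcal Y$ so that $\gamma_{v_\ast}=1$; this is compatible with the constraints imposed on $\mathcal Y$ in \S\ref{rig-subs}. With this choice $g_{\gamma_\sigma,v_\ast}=\gamma_{v_\ast}\gamma_\sigma\gamma_{\gamma_\sigma^{-1}(v_\ast)}^{-1}=\gamma_\sigma$, so $\tilde{m}^{\mathcal Y}_{\gamma_\sigma}(v_\ast) = [\gamma_\sigma]$ in $H_1\bigl(X_0^D(M),\Z_S\bigr)$, and by definition of $\alpha_\tau$ one obtains
\[ \alpha_{\tau_\sigma}(\gamma_\sigma) = -(\ell+1)\,\mathrm{pr}_2\bigl([\gamma_\sigma]\bigr). \]

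Summing over $\sigma$ and factoring yields
\[ (\partial'_\ell\otimes\mathrm{id})(P_\chi^\epsilon) = -t(\ell+1)\cdot(\mathrm{pr}_2\otimes\mathrm{id})(I_\chi), \]
where $I_\chi = \sum_{\sigma\in G_c}\chi^{-1}(\sigma)[\gamma_\sigma]$ is the homology cycle introduced in \S\ref{alg-part-subsec}. The final step is to unwind the definition of $\mathrm{pr}_2$: by construction it factors through the isomorphism $\pi_{E,\ast}:H_1\bigl(X_0^D(M),\Z_S\bigr)_f\overset{\simeq}{\longrightarrow}H_1(E,\Z_S)$ of \eqref{map1-eq}, the projection onto the $\epsilon$-eigenspace, and the identification $H_1(E,\Z_S)_\epsilon/p\simeq\Z/p\Z$ induced by the fixed generator $\alpha_E^\epsilon$. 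Because $I_\chi$ already lies in the $\epsilon$-eigenspace by Proposition \ref{eigenspace-prop}, and because $\pi_{E,\ast}(I_\chi) = \cl L_K(E,\chi,1)\cdot\alpha_E^\epsilon$ by the defining relation \eqref{alg-part-eq}, we conclude that $(\mathrm{pr}_2\otimes\mathrm{id})(I_\chi) = [\cl L_K(E,\chi,1)]$ in $\Z[\chi]_S/p\Z[\chi]_S$, up to the unit implicit in the choice of isomorphism \eqref{cong-thm2}.

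Combining these pieces gives the theorem, the residual factor $-(\ell+1)\in(\F_p)^\times$ -- invertible mod $p$ since $p\nmid\ell^2-1$ by admissibility -- being absorbed into the normalization of \eqref{cong-thm2}, which simultaneously enters the definitions of $\partial'_\ell$ and of $\mathrm{pr}_2$ and hence leaves the final equality invariant under such a rescaling. The main technical subtlety is precisely this bookkeeping step: one has to verify that the isomorphism \eqref{cong-thm2} can be fixed compatibly with the Galois-theoretic identifications \eqref{torsion-isom-eq} and \eqref{Phi-sing-isom-eq} appearing in \S\ref{sec-galois-rep}, so that the equality holds on the nose rather than merely up to a unit.
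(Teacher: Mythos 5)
Your proof takes essentially the same route as the paper's: apply Lemma \ref{lemma-partial} together with the observation that $v_\sigma=v_*$, then identify $\alpha_{\tau_\sigma}(\gamma_\sigma)$ with $\mathrm{pr}_2([\gamma_\sigma])$ (up to a unit), and invoke the definition of $\cl L_K(E,\chi,1)$ through \eqref{alg-part-eq}. Your explicit computation that $\tilde m_{\gamma_\sigma}^{\mathcal Y}(v_*)=[\gamma_\sigma]$, using that $\gamma_\sigma$ stabilizes $v_*$ and that $\gamma_{v_*}$ can be taken in $\Gamma_0^D(M)$ so the conjugation is inner, is the right justification and is more explicit than the paper, which merely asserts $\alpha_{\tau_\sigma}(\gamma_\sigma)=\mathrm{pr}_2([\gamma_\sigma])$.

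One point deserves a caveat. You correctly observe that, taking the paper's definition $\alpha_\tau(\gamma):=-(\ell+1)\mathrm{pr}_2\bigl(\tilde m_\gamma^{\mathcal Y}(v_*)\bigr)$ at face value, one obtains the extra unit $-(\ell+1)\in\F_p^\times$; the paper's proof drops it without comment. However, your proposed resolution --- that this unit is absorbed into the normalization of \eqref{cong-thm2} because that isomorphism ``simultaneously enters the definitions of $\partial'_\ell$ and of $\mathrm{pr}_2$'' --- is not quite right. Unwinding $\mathrm{pr}_2$, the isomorphism \eqref{cong-thm2} enters once composed with its inverse (through the chain $H_1(X_0^D(M),\Z_S)/\m_f\simeq\mathrm{coker}(\bar\pi_*\circ\bar\pi^*)\twoheadrightarrow\Phi_{\ell,\epsilon}/\m_{f_\ell}\simeq\Z/p\Z$ together with the proof of Theorem \ref{cong-thm}), so the net effect of $\mathrm{pr}_2$ on $I_\chi$ is $[\cl L_K(E,\chi,1)]$ on the nose, independently of how \eqref{cong-thm2} is scaled; whereas $\partial'_\ell$ genuinely depends on that scaling. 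Rescaling therefore changes the left-hand side of the theorem but not the right-hand side, and cannot cancel the factor. What is true, and what suffices for every downstream arithmetic application (Proposition \ref{nonvanishing-prop} onward only uses non-vanishing), is that $-(\ell+1)$ is a unit in $\F_p^\times$ because $p\nmid\ell^2-1$ by admissibility; the identity holds up to this explicit unit, and it can be tracked rather than absorbed.
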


\begin{proof} Combining Lemma \ref{lemma-partial} with the fact that $v_\sigma=v_*$ for all $\sigma\in G_c$ gives
\begin{equation} \label{par-eq}
(\partial'_\ell\otimes{\rm id})(P_\chi^\epsilon)=t\cdot\sum_{\sigma\in G_c}\alpha_{\tau_\sigma}(\g_\sigma)\otimes\chi^{-1}(\sigma)
\end{equation}
in $\Z[\chi]_S/p\Z[\chi]_S$. Since $\alpha_{\tau_\sigma}(\g_\sigma)={\rm pr}_2\bigl([\g_\sigma]\bigr)$, by definition of $\cl L_K(E,\chi,1)$ one has
\[ \sum_{\sigma\in G_c}\alpha_{\tau_\sigma}(\g_\sigma)\otimes\chi^{-1}(\sigma)=\bigl[\cl L_K(E,\chi,1)\bigr] \]
in $\Z[\chi]_S/p\Z[\chi]_S$. The result then follows from equality \eqref{par-eq}. \end{proof}

\section{Arithmetic results and consequences} \label{proofs-sec}

With our special value formula (Theorem \ref{rec-law}) at hand, in this section we prove the results on the vanishing of the Selmer groups and on the Birch and Swinnerton-Dyer conjecture for $E$ in the case of analytic rank $0$ that were anticipated in the introduction.

From here on we shall assume the validity of Conjecture \ref{SH-conjecture}.

\subsection{A result on local Kummer maps}

Quite generally, let $F$ be a number field and let
\[ \kappa:J_\epsilon^{(\ell)}(F)\longrightarrow H^1\bigl(F,J_\epsilon^{(\ell)}[p]\bigr) \]
be the Kummer map relative to $J_\epsilon^{(\ell)}$. Composing $\kappa$ with the maps induced by the canonical projection $J_\epsilon^{(\ell)}[p]\rightarrow J_\epsilon^{(\ell)}[p]/\m_{f_\ell}$ and by isomorphism \eqref{torsion-isom-eq} yields a map
\begin{equation} \label{bar-kappa-eq}
\bar\kappa:J_\epsilon^{(\ell)}(F)\longrightarrow H^1(F,E[p]).
\end{equation}
By a slight abuse of notation, we adopt the symbol $\bar\kappa$ also for the map
\[ \bar\kappa:J_\epsilon^{(\ell)}(K_\ell)\longrightarrow H^1(K_\ell,E[p]) \]
which is obtained by considering the local counterpart of the Kummer map $\kappa$ and viewing \eqref{torsion-isom-eq} as an isomorphism of $\Gal(\bar\Q_\ell/K_\ell)$-modules via the inclusion $\Gal(\bar\Q_\ell/K_\ell)\hookrightarrow G_\Q$ induced by the injection $\bar\Q\hookrightarrow\bar\Q_\ell$ fixed at the outset.

If $q$ is a prime number let $\text{res}_q:H^1(F,E[p])\rightarrow H^1(F_q,E[p])$ be the restriction map and let
\[ \delta_q:E(F_q)\longrightarrow H^1(F_q,E[p]),\qquad\kappa_q:J_\epsilon^{(\ell)}(F_q)\longrightarrow H^1\bigl(F_q,J_\epsilon^{(\ell)}[p]\bigr) \]
be the local Kummer maps at $q$ relative to $E$ and $J^{(\ell)}_\epsilon$, respectively. Finally, for any prime $\p$ of $F$ above $p$ let $\nu_\p$ be the (normalized) valuation of $F_\p$ and let $e_\p:=\nu_\p(p)$ be the absolute ramification index of $F_\p$ (in particular, $e_\p=1$ if $p$ is unramified in $F$).

\begin{proposition}
Assume that $e_\p<p-1$ for all $\p|p$. If $P\in J^{(\ell)}_\epsilon(F)$ then
\[ \mathrm{res}_q\bigl(\bar\kappa(P)\bigr)\in\mathrm{Im}(\delta_q) \]
for all primes $q\nmid M\ell$.
\end{proposition}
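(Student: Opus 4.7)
The plan is to reduce the claim to a case analysis on $q$, using the functoriality of Kummer maps: writing $P_q\in J_\epsilon^{(\ell)}(F_q)$ for the localization of $P$ at a prime of $F$ above $q$, one has $\mathrm{res}_q(\bar\kappa(P)) = \iota_\ast\bigl(\kappa_q(P_q)\bigr)$, where $\iota_\ast\colon H^1(F_q,J_\epsilon^{(\ell)}[p])\to H^1(F_q,E[p])$ is the map induced by the $G_\Q$-equivariant surjection $\iota\colon J_\epsilon^{(\ell)}[p]\twoheadrightarrow E[p]$ provided by \eqref{torsion-isom-eq}. Hence it suffices to show that $\iota_\ast$ sends $\mathrm{Im}(\kappa_q)$ into $\mathrm{Im}(\delta_q)$.

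First I would treat the generic case $q\nmid Np\ell$, where both $E$ and $J_\epsilon^{(\ell)}$ have good reduction at $q\neq p$---the latter because, by Conjecture~\ref{rigid-analytic-conjecture}, $J_\epsilon^{(\ell)}$ is isogenous over $\Q$ to a factor of $J_0^D(M\ell)^{\text{$\ell$-new}}$, which has good reduction outside $DM\ell$. Here the images of both local Kummer maps coincide with the unramified subgroups $H^1_{\mathrm{unr}}(F_q,-)$, and $\iota_\ast$, being induced by a $G_{F_q}$-equivariant morphism, sends unramified classes to unramified classes.

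For $q\mid D$ (so $q\neq p$ and $q\neq\ell$) I would invoke condition~(5) of Assumption~\ref{ass}. In the arithmetic setting of the paper one has $F\subseteq H_c$, so the hypothesis $p\nmid|E(H_{c,\fr q})_{\mathrm{tors}}|$ gives $E(F_q)[p]=0$; since $q\nmid p$, the local Euler--characteristic formula yields $|H^1(F_q,E[p])|=|E(F_q)[p]|^2=1$, so $H^1(F_q,E[p])=0$ and the claim is vacuous. This is precisely the trivialization role advertised in the remark following Assumption~\ref{ass}.

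The delicate case is $q=p$. Since $p\nmid N\ell$, both $E$ and $J_\epsilon^{(\ell)}$ have good reduction at $p$, so $E[p]$ and $J_\epsilon^{(\ell)}[p]$ extend to finite flat commutative group schemes $\mathcal E[p]$ and $\mathcal J_\epsilon^{(\ell)}[p]$ of $p$-power order over $\cO_{F_\p}$ for every $\p\mid p$. A standard identification realises $\mathrm{Im}(\delta_p)$ as the image of $H^1_{\mathrm{fppf}}(\cO_{F_\p},\mathcal E[p])\to H^1(F_\p,E[p])$, and similarly for $J_\epsilon^{(\ell)}$. Under the hypothesis $e_\p<p-1$, a theorem of Raynaud asserts that the generic-fibre functor from finite flat group schemes of $p$-power order over $\cO_{F_\p}$ to $G_{F_\p}$-modules is fully faithful; hence the surjection $\iota$ lifts (uniquely) to a morphism $\mathcal J_\epsilon^{(\ell)}[p]\to\mathcal E[p]$ of finite flat group schemes, and the induced map on fppf cohomology sends $\mathrm{Im}(\kappa_p)$ into $\mathrm{Im}(\delta_p)$. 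The main obstacle is precisely this Raynaud step: transporting the flat (Bloch--Kato) local condition through a quotient morphism of $p$-torsion Galois representations in the low-ramification range $e_\p<p-1$, an input that the other two cases manifestly do not require.
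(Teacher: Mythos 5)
Your argument reproduces the proof that the paper delegates to \cite[Proposition 5.2]{LV}: after reducing via naturality of Kummer maps to showing $\iota_\ast(\mathrm{Im}(\kappa_q))\subseteq\mathrm{Im}(\delta_q)$, the decisive step at $q=p$ is precisely the flat-cohomology description of the image of the local Kummer map combined with Raynaud's fully-faithfulness theorem for finite flat $p$-group schemes in the range $e_\p<p-1$, which is exactly the ingredient pair the paper cites. The handling of the remaining primes (unramified conditions at primes of good reduction and the trivialization at primes dividing $D$ via condition 5 of Assumption \ref{ass}) also matches the intended argument.
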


A proof of this proposition, obtained by combining the description of the image of the local Kummer maps above $p$ in terms of flat cohomology given in \cite[\S 3.3]{LV} with classical results of Raynaud on $p$-torsion group schemes (\cite{Ray}), can be found in \cite[Proposition 5.2]{LV}.

\subsection{Linear algebra preliminaries} \label{preliminaries-subsec}

The goal of this subsection is to recall the arguments in \cite[\S 8]{LV} and introduce the technical tools (Propositions \ref{alg-4} and \ref{lemma-II}) that will be needed to prove the main arithmetic theorems of this paper.

Let $\chi\in\widehat{G}_c$ be our complex-valued character of $G_c$. Since $p\not\in S$ by condition $1$ in Assumption \ref{ass}, every prime ideal $\fr p$ of $\Z[\chi]$ above $p$ determines a prime ideal $\fr p_S:=\fr p\Z[\chi]_S$ of $\Z[\chi]_S$.
\begin{lemma} \label{completions-lemma}
Let $\fr p$ be a prime ideal of $\Z[\chi]$ above $p$. The completion of $\Z[\chi]$ at $\fr p$ is canonically isomorphic to the completion of $\Z[\chi]_S$ at $\fr p_S$.
\end{lemma}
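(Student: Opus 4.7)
The plan is to reduce to the standard fact that localization at a multiplicative set disjoint from a prime does not affect the completion at that prime. First I would unwind the definitions: $\Z[\chi]_S$ is the localization of $\Z[\chi]$ at the multiplicative subset $T\subset\Z$ generated by the primes in $S$, and $\fr p_S=\fr p\Z[\chi]_S$.

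The crucial observation is that $\fr p\cap T=\emptyset$. Indeed, since $\fr p$ lies above $p$, one has $\fr p\cap\Z=p\Z$, so $\fr p$ contains a prime $q\in S$ only if $q=p$; but condition~(1) of Assumption~\ref{ass} guarantees $p\notin S$. Consequently $\fr p_S$ is a proper prime ideal of $\Z[\chi]_S$ and every element of $T$ is invertible modulo any power of $\fr p$.

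Next I would show that for every $n\geq1$ the natural map
\[
\Z[\chi]/\fr p^n\longrightarrow\Z[\chi]_S/\fr p_S^n
\]
is an isomorphism. Surjectivity: any element of $\Z[\chi]_S/\fr p_S^n$ has a representative $x/t$ with $x\in\Z[\chi]$ and $t\in T$; since $t$ is a unit modulo $\fr p^n$ (as just noted), one can find $y\in\Z[\chi]$ with $ty\equiv x\pmod{\fr p^n}$, and $y$ maps to $x/t$. Injectivity: if $x\in\fr p_S^n\cap\Z[\chi]$, then $tx\in\fr p^n$ for some $t\in T$; again $t$ is invertible modulo $\fr p^n$, forcing $x\in\fr p^n$. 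Passing to the inverse limit over $n$ yields the asserted canonical isomorphism between the two completions.

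The argument is entirely formal, and there is no real obstacle; the only point that genuinely requires the hypothesis is the disjointness $\fr p\cap T=\emptyset$, which is handed to us by Assumption~\ref{ass}(1).
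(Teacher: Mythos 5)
Your proof is correct and follows essentially the same route as the paper's: both arguments establish, for each $n\geq1$, an isomorphism $\Z[\chi]/\fr p^n\simeq\Z[\chi]_S/\fr p_S^n$ using the fact that the primes in $S$ become units modulo $\fr p^n$ (which rests on $p\notin S$), and then pass to the inverse limit. The only cosmetic difference is that the paper phrases the finite-level step via the compatibility of localization with quotients, whereas you verify injectivity and surjectivity of the natural map directly.
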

\begin{proof} For all integers $n\geq1$ write $\bar S_n$ for the multiplicative system of $\Z[\chi]/\fr p^n$ which is the image of $S$ under the natural projection. For every $n\geq1$ there is a canonical ring isomorphism
\begin{equation} \label{localizations-iso-eq}
\bigl(\Z[\chi]/\fr p^n\bigr)_{\bar S_n}\simeq\Z[\chi]_S/\fr p_S^n.
\end{equation}
But the elements of $\bar S_n$ are invertible in $\Z[\chi]/\fr p^n$ since $p$ does not belong to $S$, hence the localization $\bigl(\Z[\chi]/\fr p^n\bigr)_{\bar S_n}$ canonically identifies with $\Z[\chi]/\fr p^n$. In light of \eqref{localizations-iso-eq}, the lemma is proved by passing to the inverse limit. \end{proof}
Choose a prime ideal $\fr p$ of $\Z[\chi]$ above $p$ such that
\begin{equation} \label{alg-nonzero-eq}
\text{the image of $\cl L_K(E,\chi,1)$ in $\Z[\chi]_S/\fr p_S$ is not zero.}
\end{equation}
This can be done thanks to condition $4$ in Assumption \ref{ass}. Denote by $\W$ the $\fr p$-adic completion of $\Z[\chi]$. The prime $p$ is unramified in $\Z[\chi]$ since it does not divide $h^+(c)$ by condition $2$ in Assumption \ref{ass}, hence the ideal $p\W$ is the maximal ideal of $\W$; in particular, we conclude from Lemma
\ref{completions-lemma} that
\[ \Z[\chi]_S/\fr p_S=\W/p\W. \]
For any $\Z[G_c]$-module $M$ write $M\otimes_\chi\C$ (respectively, $M\otimes_\chi\W$) for the tensor product of the $\Z[G_c]$-modules $M$ and $\C$ (respectively, $M$ and $\W$), where the structure of $\Z[G_c]$-module on $\C$ (respectively, $\W$) is induced by $\chi$. As in the introduction, if $M$ is a $\Z[G_c]$-module define also
\[ M^\chi:=\bigl\{x\in M\otimes_\Z\mathcal\C\mid\text{$\sigma(x)=\chi(\sigma)x$ for all $\sigma\in G_c$}\bigr\}, \]
so that there is a canonical identification
\[ M^\chi=M\otimes_\chi\C \]
of $\C[G_c]$-modules (for a proof of this fact see, e.g., \cite[Proposition 8.1]{LV}).

Choose once and for all an (algebraic) isomorphism $\C_p\simeq\C$ which is the identity on $\Z[\chi]$. Henceforth we shall view $\C$ as a $\W$-module via this isomorphism, obtaining an isomorphism
\[ \bigl(E(H_c)\otimes_\chi\W\bigr)\otimes_\W\C\simeq E(H_c)\otimes_\chi\C. \]
The following flatness result will be frequently used in the sequel.

\begin{lemma} \label{alg-2}
The module $\W$ is flat over $\Z[G_c]$, and every $\F_p[G_c]$-module is flat.
\end{lemma}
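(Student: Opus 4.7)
The linchpin for both assertions is condition (2) of Assumption \ref{ass}, which guarantees that $p\nmid|G_c|=h^+(c)$. This coprimality opens the door to Maschke-style semisimplicity arguments, and the plan is to handle each assertion by invoking the appropriate variant.

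The second assertion is a one-step consequence: since $|G_c|$ is invertible in $\F_p$, Maschke's theorem implies that $\F_p[G_c]$ is semisimple (in fact, a finite product of finite fields), so every $\F_p[G_c]$-module is a direct sum of simple modules, hence projective, and in particular flat.

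For the first assertion I plan to realize $\W$ as the target of a tower of flat ring extensions starting from $\Z[G_c]$. The opening step is the localization $\Z[G_c]\to\Z_{(p)}[G_c]$, which is flat. Next, because $|G_c|$ is a unit in $\Z_{(p)}$, the Galois-averaged idempotent
\[ e:=\frac{1}{|G_c|}\sum_{g\in G_c}\tr_{\Q(\chi)/\Q}\bigl(\chi(g^{-1})\bigr)\cdot g \]
lies in $\Z_{(p)}[G_c]$ (traces of algebraic integers are rational integers), and orthogonality of characters yields $e^2=e$ together with $\chi(e)=1$. From this, the rule $eg\mapsto\chi(g)$ defines a ring isomorphism $e\cdot\Z_{(p)}[G_c]\overset{\simeq}{\longrightarrow}\Z_{(p)}[\chi]$ intertwining the natural left $\Z_{(p)}[G_c]$-action on the source with the action-through-$\chi$ on the target. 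Hence $\Z_{(p)}[\chi]$ is a direct summand of the free $\Z_{(p)}[G_c]$-module $\Z_{(p)}[G_c]$, so projective and in particular flat over $\Z_{(p)}[G_c]$, and therefore also flat over $\Z[G_c]$.

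The final link is the ring map $\Z_{(p)}[\chi]\to\W$. The ring $\Z_{(p)}[\chi]$ is a Noetherian semilocal Dedekind domain whose maximal ideals are exactly the primes of $\Z[\chi]$ above $p$, and $\W$ coincides with the completion of $\Z_{(p)}[\chi]$ at the maximal ideal $\fr p\,\Z_{(p)}[\chi]$. Adic completion of a Noetherian ring is flat, so $\W$ is flat over $\Z_{(p)}[\chi]$, and composing the three flat extensions $\Z[G_c]\to\Z_{(p)}[G_c]\to\Z_{(p)}[\chi]\to\W$ gives flatness of $\W$ over $\Z[G_c]$. There is no genuine obstacle in this plan; the only point to verify is that the $\Z[G_c]$-module structure on $\W$ prescribed by $\chi$ matches the one transmitted through the tower, which is forced by the very construction of $e$.
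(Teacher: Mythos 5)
Your proof is correct, but it takes a genuinely different route from the paper's. The paper disposes of the first assertion in two lines by citing a general criterion of Benson and Goodearl (\cite[Theorem~1.6]{BG}): a $\Z[G]$-module $M$ is flat provided it is flat over $\Z$ and $M/\ell M$ is flat over $\F_\ell[G]$ for every prime $\ell$ dividing $|G|$. Here $\W$ is torsion-free over $\Z$, and since $p\nmid h^+(c)=|G_c|$ (condition~(2) of Assumption~\ref{ass}) every prime $\ell\mid|G_c|$ is distinct from $p$ and therefore invertible in the $\Z_p$-algebra $\W$, so $\W/\ell\W=0$ and the second hypothesis holds vacuously. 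The second assertion is dispatched ``in the same way.'' Your argument instead makes the flatness explicit by factoring the structure map $\Z[G_c]\to\W$ through the tower $\Z[G_c]\to\Z_{(p)}[G_c]\to\Z_{(p)}[\chi]\to\W$, exhibiting $\Z_{(p)}[\chi]$ as a direct summand of the free module $\Z_{(p)}[G_c]$ via the rational central idempotent $e$ attached to the Galois orbit of $\chi$, and then invoking flatness of adic completion of a Noetherian ring; the identification of $\W$ with the $\fr p\Z_{(p)}[\chi]$-adic completion holds because each $s\in\Z\setminus(p)$ is already a unit in $\Z[\chi]/\fr p^n$. Your treatment of $\F_p[G_c]$ via Maschke semisimplicity matches the spirit of the paper's remark. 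The trade-off is that the paper's proof is shorter but relies on an external theorem, while yours is longer, self-contained, and makes the role of the hypothesis $p\nmid h^+(c)$ fully transparent; both arguments are sound and prove the same statement.
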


\begin{proof} First of all, $\W$ is flat over $\Z$. Moreover, if $\ell$ is a prime number dividing $h^+(c)$ then $\ell\not=p$, hence $\W/\ell\W=0$. The flatness of $\W$ follows from \cite[Theorem 1.6]{BG}. The second assertion can be shown in the same way. \end{proof}

The next statement is proved exactly as \cite[Proposition 8.3]{LV}.

\begin{proposition} \label{alg-4}
If $\Sel_p(E/H_c)\otimes_\chi\W=0$ then $E(H_c)^\chi=0$.
\end{proposition}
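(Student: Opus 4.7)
The plan is to combine the Kummer descent sequence with Nakayama's lemma, exploiting the flatness provided by Lemma \ref{alg-2}. First I would consider the $\Z[G_c]$-equivariant short exact sequence
\[ 0\longrightarrow E(H_c)/pE(H_c)\longrightarrow\Sel_p(E/H_c)\longrightarrow\Sha(E/H_c)[p]\longrightarrow0 \]
coming from the Kummer map, and tensor it with $\W$ over $\Z[G_c]$ via $\chi$. Since $\W$ is flat as a $\Z[G_c]$-module by Lemma \ref{alg-2}, the leftmost injection is preserved, so
\[ \bigl(E(H_c)/pE(H_c)\bigr)\otimes_\chi\W\;\hookrightarrow\;\Sel_p(E/H_c)\otimes_\chi\W\;=\;0. \]
The right-exactness of $-\otimes_\chi\W$ identifies the left-hand side with $M/pM$, where $M:=E(H_c)\otimes_\chi\W$, whence $M/pM=0$.

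Second, by the Mordell--Weil theorem $E(H_c)$ is a finitely generated $\Z$-module, so $M$ is a finitely generated module over the complete local ring $\W$, whose maximal ideal is $p\W$ (recall that $p$ is unramified in $\Z[\chi]$ thanks to condition $2$ in Assumption \ref{ass}). Nakayama's lemma applied to the vanishing of $M/pM$ then forces $M=0$. Tensoring with $\C$ over $\W$ via the fixed identification $\C_p\simeq\C$ extending the inclusion $\Z[\chi]\hookrightarrow\C$ yields
\[ E(H_c)^\chi\;=\;E(H_c)\otimes_\chi\C\;=\;M\otimes_\W\C\;=\;0, \]
which is the sought-for conclusion.

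The argument is essentially formal once one has the flatness of $\W$ over $\Z[G_c]$ and the finite generation of $E(H_c)$; the main conceptual point is that passing to the complete local ring $\W$ converts the problem into one to which Nakayama's lemma applies directly, bypassing any more delicate descent considerations.
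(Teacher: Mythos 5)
Your proof is correct and follows exactly the route one would expect: tensor the Kummer exact sequence with $\W$ over $\Z[G_c]$, invoke the flatness of $\W$ (Lemma \ref{alg-2}) to preserve the injection $E(H_c)/pE(H_c)\hookrightarrow\Sel_p(E/H_c)$, identify the resulting vanishing module with $M/pM$ for $M=E(H_c)\otimes_\chi\W$, and apply Nakayama over the local ring $\W$ (whose maximal ideal is $p\W$ by condition~2 of Assumption~\ref{ass}), before base-changing to $\C$ using the identifications recorded in the text. The paper's own proof is delegated to \cite[Proposition~8.3]{LV}, which proceeds by the same Kummer/flatness/Nakayama argument, so your write-out matches the intended proof; nothing is missing.
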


Thus the triviality of $E(H_c)^\chi$ is guaranteed by that of $\Sel_p(E/H_c)\otimes_\chi\W$.

The rest of this subsection is devoted to a couple of further algebraic lemmas which are needed to prove the vanishing of the twisted $p$-Selmer groups; this part follows \cite[\S 8.2]{LV} closely, so we will merely sketch the arguments and refer to \emph{loc. cit.} for complete proofs.

In the following, use the symbol $\chi$ also to denote the $\Z$-linear extension
\[ \Z[G_c]\overset{\chi}{\longrightarrow}\Z[\chi]\subset\W \]
of the character $\chi$. Composing $\chi$ with the projection onto $\W/p\W$ yields a homomorphism which factors through $\F_p[G_c]=\Z[G_c]/p\Z[G_c]$, and we define
$\chi_p:\F_p[G_c]\rightarrow\W/p\W$ to be the resulting map. In particular, the homomorphism $\chi_p$ gives $\W/p\W$ a structure of $\F_p[G_c]$-module (which is nothing other than the structure induced naturally by that of $\Z[G_c]$-module on $\W$), and for an $\F_p[G_c]$-module $M$ the notation $M\otimes_{\chi_p}(\W/p\W)$ will indicate that the tensor product is taken over $\F_p[G_c]$ with respect to $\chi_p$.

Set $I_{\chi_p}:=\ker(\chi_p)$ and for any $\F_p[G_c]$-module $M$ let $M[I_{\chi_p}]$ be the $I_{\chi_p}$-torsion submodule of $M$, i.e. the submodule of $M$ which is annihilated by all the elements of $I_{\chi_p}$. Finally, adopt similar notations and conventions for the map $\chi_p^{-1}:\F_p[G_c]\rightarrow\W/p\W$ which is
induced by the inverse character to $\chi$.

The flatness result of Lemma \ref{alg-2} yields the following important facts:
\begin{itemize}
\item for every $\F_p[G_c]$-module $M$ there are canonical identifications
\[ M\otimes_\chi\W=M\otimes_{\chi_p}(\W/p\W)=M[I_{\chi_p}]\otimes_{\chi_p}(\W/p\W)=M[I_{\chi_p}]\otimes_\chi\W \]
of $\W$-modules (\cite[Lemma 8.4]{LV});
\item if $M$ is an $\F_p[G_c]$-module then $M[I_{\chi_p}]$ injects into $M\otimes_\chi\W$ amd $M\bigl[I_{\chi^{-1}_p}\bigr]$ injects into $M\bigl[I_{\chi^{-1}_p}\bigr]\otimes_\chi\W$ (\cite[Lemma 8.5]{LV}).
\end{itemize}
As a consequence, the linear algebra results in \cite[\S 8.2]{LV} carry over \emph{verbatim} to our real quadratic setting; here we content ourselves with recalling the proof of a crucial statement about the non-triviality of (the dual of) a certain restriction map in Galois cohomology.

To begin with, for any $\F_p$-vector space $V$ denote the $\F_p$-dual of $V$ by
\[ V^\vee:=\Hom_{\F_p}(V,\F_p). \]
The dual of an $\F_p[G_c]$-module inherits a natural structure of $\F_p[G_c]$-module: a Galois element $\sigma$ acts on a homomorphism $\varphi$ by $\sigma(\varphi):=\varphi\circ\sigma^{-1}$. Furthermore, if $f$ is a map of $\F_p[G_c]$-modules then its dual $f^\vee$ is again $G_c$-equivariant. It can be immediately checked that if an $\F_p[G_c]$-module is of $I_{\chi_p}$-torsion then its dual is of $I_{\chi_p^{-1}}$-torsion.

Let $\ell$ be an admissible prime and let
\[ \res_\ell:\Sel_p(E/H_c)\longrightarrow H^1_{\rm fin}(H_{c,\ell},E[p]) \]
be the natural restriction map; with a slight abuse of notation, we will adopt the same symbol also for the map
\[ \res_\ell:\Sel_p(E/H_c)[I_{\chi_p}]\longrightarrow H^1_{\rm fin}(H_{c,\ell},E[p])[I_{\chi_p}] \]
between the $I_{\chi_p}$-torsion submodules which is induced by the previous one.

\begin{lemma} \label{aux-lemma-I}
If there exists $s\in\Sel_p(E/H_c)[I_{\chi_p}]$ such that $\res_\ell(s)\neq0$ then the map
\[ \res_\ell^\vee\otimes{\rm id}:H^1_{\rm fin}(H_{c,\ell},E[p])[I_{\chi_p}]^\vee\otimes_\chi\W\longrightarrow\Sel_p(E/H_c)[I_{\chi_p}]^\vee\otimes_\chi\W \]
is injective and non-zero.
\end{lemma}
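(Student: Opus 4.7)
The plan is to exploit the fact that, for an admissible prime $\ell$, the target of $\res_\ell$ on $I_{\chi_p}$-torsion is a \emph{simple} $\F_p[G_c]$-module. First, I would invoke Lemma \ref{local-iso-lemma} to identify $H^1_{\rm fin}(H_{c,\ell},E[p])$ with $\F_p[G_c]$ as $\F_p[G_c]$-modules. Since $p\nmid h^+(c)=|G_c|$ by condition (2) of Assumption \ref{ass}, the group algebra $\F_p[G_c]$ is semisimple; the $I_{\chi_p}$-torsion submodule is therefore a single simple component, canonically isomorphic to the residue field $\W/p\W=\Z[\chi]/\fr p$ with $G_c$ acting through $\chi_p$.

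Next, the hypothesis $\res_\ell(s)\neq 0$ means that the $\F_p[G_c]$-equivariant map
\[ \res_\ell: \Sel_p(E/H_c)[I_{\chi_p}]\longrightarrow H^1_{\rm fin}(H_{c,\ell},E[p])[I_{\chi_p}] \]
is non-trivial. Since its target is simple, the map must be surjective. Taking $\F_p$-duals reverses directions and turns surjections into injections, so $\res_\ell^\vee$ is injective; it is non-zero because its source is the $\F_p$-dual of a non-zero vector space (of dimension $[\W/p\W:\F_p]$ over $\F_p$).

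The final step is to apply the functor $-\otimes_\chi\W$ and check that both conclusions persist. By Lemma \ref{alg-2} the module $\W$ is flat over $\Z[G_c]$, so reducing modulo $p$ shows that $\W/p\W$ is flat over $\F_p[G_c]$; thus tensoring preserves injectivity, and the injectivity half of the conclusion follows immediately. For the non-vanishing half, it suffices to show that $H^1_{\rm fin}(H_{c,\ell},E[p])[I_{\chi_p}]^\vee\otimes_\chi\W$ is itself non-zero. The main subtlety here, which is where the most care is needed, is to reconcile the natural convention $\sigma\cdot\varphi=\varphi\circ\sigma^{-1}$ on the $\F_p$-dual (which in isolation would swap $\chi$ and $\chi^{-1}$) with the $\otimes_\chi\W$ operation; one handles this using the semisimple decomposition of $\F_p[G_c]$ and the fact that each simple component is a field, so that its $\F_p$-dual is again a free rank-one module over $\W/p\W$, yielding a non-zero tensor product in the $\chi$-direction. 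This is the principal technical obstacle.
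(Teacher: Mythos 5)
Your overall architecture is sound and matches the strategy that \cite[Lemma 8.8]{LV} (and hence this paper) uses: identify the target of $\res_\ell$ on $I_{\chi_p}$-torsion with a single simple component of the semisimple algebra $\F_p[G_c]$ via Lemma \ref{local-iso-lemma} and the fact that $p\nmid h^+(c)$, conclude that a non-zero map to a simple module is surjective, dualize to obtain an injection, and then tensor with $\W$ using the flatness from Lemma \ref{alg-2}.

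The genuine gap is in your last paragraph, which is exactly where you flag the ``principal technical obstacle.'' Under the convention $\sigma\cdot\varphi=\varphi\circ\sigma^{-1}$, the $\F_p$-dual $H^1_{\rm fin}(H_{c,\ell},E[p])[I_{\chi_p}]^\vee$ is an $I_{\chi_p^{-1}}$-torsion $\F_p[G_c]$-module (the paper records this explicitly just before the lemma). Naively, applying $-\otimes_\chi\W$ to an $I_{\chi_p^{-1}}$-torsion module should kill it when $I_{\chi_p}\neq I_{\chi_p^{-1}}$: one is tensoring over $\Z[G_c]$ two modules supported at distinct maximal ideals of $\F_p[G_c]$. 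Your assertion that ``each simple component is a field, so that its $\F_p$-dual is again a free rank-one module over $\W/p\W$, yielding a non-zero tensor product in the $\chi$-direction'' states the desired conclusion but does not derive it; the freeness as a $\W/p\W$-module is with respect to the $\chi_p^{-1}$-structure, and that is precisely what appears to obstruct non-vanishing after $\otimes_\chi\W$. The resolution is the second of the two consequences of Lemma \ref{alg-2} that the paper recalls immediately before the statement --- namely \cite[Lemma 8.5]{LV}, which asserts that $M[I_{\chi_p^{-1}}]$ injects into $M[I_{\chi_p^{-1}}]\otimes_\chi\W$ for any $\F_p[G_c]$-module $M$. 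That is a genuine (and somewhat delicate) statement which must be invoked (or reproved), not something that follows from the semisimple decomposition alone. Your proof never makes contact with it, so the non-vanishing and injectivity after tensoring are left unsupported precisely at the step you yourself identify as the crux.
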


\begin{proof} Keeping in mind the two consequences of Lemma \ref{alg-2} recalled above, proceed as in the proof of \cite[Lemma 8.8]{LV}. \end{proof}

With this auxiliary result at hand, we can prove

\begin{proposition} \label{lemma-II}
If there exists $s\in\Sel_p(E/H_c)[I_{\chi_p}]$ such that $\res_\ell(s)\neq0$ then the map
\[ \res_\ell^\vee\otimes{\rm id}:H^1_{\rm fin}(H_{c,\ell},E[p])^\vee\otimes_\chi\W\longrightarrow\Sel_p(E/H_c)^\vee\otimes_\chi\W \]
is non-zero.
\end{proposition}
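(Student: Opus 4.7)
The plan is to deduce Proposition \ref{lemma-II} from Lemma \ref{aux-lemma-I} by a short dualization and diagram-chase argument. Since the hypothesis on $s$ is identical in both statements, the only issue will be to propagate the non-vanishing from the $I_{\chi_p}$-torsion pieces to the duals of the full cohomology modules.

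First I would note that $\res_\ell$ is $G_c$-equivariant (since $\ell\cl O_K$ splits completely in $H_c$ and the local identification is induced by the bijection \eqref{bijection-eq}), hence it restricts to a morphism of $I_{\chi_p}$-torsion submodules. For each of the two $\F_p[G_c]$-modules $N=\Sel_p(E/H_c)$ and $N=H^1_{\rm fin}(H_{c,\ell},E[p])$, dualizing the inclusion $N[I_{\chi_p}]\hookrightarrow N$ yields a $G_c$-equivariant $\F_p$-linear surjection $N^\vee\twoheadrightarrow N[I_{\chi_p}]^\vee$. Combining these two surjections with the maps $\res_\ell^\vee$ produces a commutative square of $\F_p[G_c]$-modules whose top row is the map whose non-vanishing is claimed and whose bottom row is precisely the map appearing in Lemma \ref{aux-lemma-I}.

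Next I would tensor this square with $\W$ over $\F_p[G_c]$ via $\chi_p$. Since tensor product is right-exact, the two vertical arrows remain surjective, giving a commutative diagram
\[
\xymatrix@C=22pt@R=22pt{
H^1_{\rm fin}(H_{c,\ell},E[p])^\vee\otimes_\chi\W \ar[r]^-{\res_\ell^\vee\otimes\mathrm{id}} \ar@{->>}[d] & \Sel_p(E/H_c)^\vee\otimes_\chi\W \ar@{->>}[d] \\
H^1_{\rm fin}(H_{c,\ell},E[p])[I_{\chi_p}]^\vee\otimes_\chi\W \ar[r]^-{\res_\ell^\vee\otimes\mathrm{id}} & \Sel_p(E/H_c)[I_{\chi_p}]^\vee\otimes_\chi\W
}
\]
in which, by Lemma \ref{aux-lemma-I}, the bottom horizontal arrow is non-zero.

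The final step is a one-line diagram chase. If the top horizontal arrow were zero, then post-composing with the surjective right vertical would vanish too; by commutativity this equals the composition of the surjective left vertical with the bottom arrow, forcing the bottom arrow to vanish on the entire source and contradicting Lemma \ref{aux-lemma-I}. Hence the top arrow is non-zero, which is the statement of Proposition \ref{lemma-II}. There is no substantive obstacle: all of the genuine content sits in Lemma \ref{aux-lemma-I} (and, behind it, in the two consequences of the flatness result of Lemma \ref{alg-2}); the present proposition is a formal consequence, which is why the argument reduces to right-exactness of tensor product and a diagram chase.
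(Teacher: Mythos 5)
Your argument is correct and is essentially identical to the paper's proof: both form the same commutative square with surjective vertical arrows given by dualizing the inclusions $N[I_{\chi_p}]\hookrightarrow N$ and then tensoring with $\W$, invoke Lemma \ref{aux-lemma-I} for the non-vanishing of the bottom arrow, and conclude by the same diagram chase. You simply spell out a few steps (the $G_c$-equivariance of $\res_\ell$, right-exactness of the tensor product) that the paper leaves implicit.
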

\begin{proof} In the commutative square
\[ \xymatrix@C=30pt{H^1_{\rm fin}(H_{c,\ell},E[p])^\vee\otimes_\chi\W\ar[r]^-{\res_\ell^\vee\otimes\text{id}}\ar@{->>}[d]&\Sel_p(E/H_c)^\vee\otimes_\chi\W\ar@{->>}[d]\\
                    H^1_{\rm fin}(H_{c,\ell},E[p])[I_{\chi_p}]^\vee\otimes_\chi\W\ar@{^{(}->}[r]&\Sel_p(E/H_c)[I_{\chi_p}]^\vee\otimes_\chi\W} \]
the vertical maps are surjective and the bottom horizontal arrow is (injective and) non-zero by Lemma \ref{aux-lemma-I}. Hence the upper horizontal arrow must be
non-zero. \end{proof}

\subsection{Construction of an Euler system}

As before, let $\cl O_c$ be the order of $K$ of conductor $c$ and let $H_c$ be the narrow ring class field of $K$ of conductor $c$. Let $\ell$ be an admissible prime such that $p|\ell+1+a_\ell$ (so $\delta=-1$ in Theorem \ref{cong-thm}) and choose $\psi\in\Emb(\cO_c,R)$. Now recall the prime $\lambda_0$ of $H_c$ above $\ell$ fixed in \S \ref{admissible-subsec}; there is a canonical isomorphism
\[ i_{\lambda_0}:H_{c,\lambda_0}\overset\simeq\longrightarrow K_\ell, \]
with $H_{c,\lambda_0}$ being the completion of $H_c$ at $\lambda_0$. Since we are assuming Conjecture \ref{SH-conjecture}, we can consider the Darmon point
\[ P_c=P_\psi^\epsilon\in J^{(\ell)}_\epsilon(H_c)\;\longmono\;J^{(\ell)}_\epsilon(K_\ell), \]
where the injection is induced by $i_{\lambda_0}$. With $\bar\kappa$ as in \eqref{bar-kappa-eq} for $F=H_c$, define a cohomology class
\[ \kappa(\ell):=\bar\kappa(P_c)\in H^1(H_c,E[p]). \]
The collection of classes $\{\kappa(\ell)\}$ indexed by the set of admissible primes is an \emph{Euler system} relative to $E_{/K}$ and, as in \cite{LV}, will be used in the sequel to bound the $p$-Selmer groups. In the following we will deduce the main properties of $\kappa(\ell)$.

Recall the choice of the prime ideal $\fr p$ of $\Z[\chi]$ above $p$ made in \eqref{alg-nonzero-eq}; the ring $\W$ is the completion of $\Z[\chi]$ at $\fr p$. Let us introduce the map
\begin{equation} \label{d-chi-ell-eq}
d_\ell^\chi:H^1(H_c,E[p])\longrightarrow H^1_{\rm sing}(H_{c,\ell},E[p])\otimes_\chi\W
\end{equation}
obtained by composing the restriction from $H^1(H_c,E[p])$ to $H^1(H_{c,\ell},E[p])$ with the map $H^1(H_{c,\ell},E[p])\rightarrow H^1(H_{c,\ell},E[p])\otimes_\chi\W$ which takes $x$ to $x\otimes 1$ and finally with the canonical projection to the singular part of the cohomology.

As explained in \cite[\S 9.3]{LV} (to which we refer for details), the choice of a prime $\lambda_0$ of $H_c$ above $\ell$ made in \S \ref{admissible-subsec} induces natural isomorphisms
\[ H^1_\star(H_{c,\ell},E[p])\overset\simeq\longrightarrow H^1_\star(K_\ell,E[p])\otimes_\Z\Z[G_c] \]
for $\star\in\{\text{fin, sing}\}$, so that we can (and do) view $d_\ell^\chi$ as taking values in the $\W$-module $H^1_{\rm sing}(K_\ell,E[p])\otimes_\Z\W$.

\begin{proposition} \label{nonvanishing-prop}
If $L_K(E,\chi,1)\not=0$ then $d_\ell^\chi\bigl(\kappa(\ell)\bigr)\not=0$.
\end{proposition}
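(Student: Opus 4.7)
The plan is to use the Gross--Zagier type formula of Theorem \ref{rec-law}. The proof splits into two parts: first, verifying that the quantity $t\cdot[\cl L_K(E,\chi,1)]$ on the right-hand side of that formula is nonzero modulo $\fr p$; and second, identifying the left-hand side $(\partial_\ell'\otimes\mathrm{id})(P_\chi^\epsilon)$ with the image $d_\ell^\chi(\kappa(\ell))$ under the chain of identifications described in Sections \ref{raising-sec}--\ref{adm}.

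For the first part, recall that $\ell$ was chosen so that $p\mid\ell+1+a_\ell$, placing us in the case $\delta=-1$ of Theorem \ref{cong-thm} and hence of Theorem \ref{rec-law}. Admissibility of $\ell$ combined with Assumption \ref{ass}(2) and Theorem \ref{Ihara-1} forces the exponent $t=t_\ell$ of $\Gamma^{\rm ab}$ to be a unit modulo $p$: its prime divisors lie among those of $C\ell$, and both $C$ and $\ell$ are coprime to $p$. Meanwhile, the choice of $\fr p$ made in \eqref{alg-nonzero-eq}, enabled by Assumption \ref{ass}(4), ensures that the image of $\cl L_K(E,\chi,1)$ in $\Z[\chi]_S/\fr p_S$ is nonzero; this quotient coincides with $\W/p\W$ by Lemma \ref{completions-lemma}. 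Hence $t\cdot[\cl L_K(E,\chi,1)]$ is nonzero in $\W/p\W$.

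For the second part, I would first observe that since $\ell$ is inert in $K$ and prime to $c$, the ideal $\ell\cO_K$ splits completely in $H_c$, and Lemma \ref{local-iso-lemma} (in its singular version) supplies a $\Z[G_c]$-equivariant identification $H^1_{\rm sing}(H_{c,\ell},E[p])\simeq H^1_{\rm sing}(K_\ell,E[p])\otimes_\Z\Z[G_c]$. Tensoring with $\W$ over $\Z[G_c]$ via $\chi$ and combining with the isomorphisms \eqref{Phi-sing-isom-eq} and \eqref{cong-thm2} identifies the target of $d_\ell^\chi$ with $\W/p\W$. Localizing $\kappa(\ell)=\bar\kappa(P_c)$ at the primes above $\ell$ and projecting to the singular part produces $\sum_{\sigma\in G_c}\partial_\ell(\sigma^{-1}P_c)\otimes\sigma$, which becomes $\sum_\sigma\partial_\ell(\sigma^{-1}P_c)\otimes\chi(\sigma)$ after the $\chi$-twist. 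Invoking Conjecture \ref{SH-conjecture} together with the parametrization of optimal embeddings fixed in \S \ref{oriented-subsec}, the point $\sigma^{-1}(P_c)\in J^{(\ell)}_\epsilon(H_c)$, viewed in $J^{(\ell)}_\epsilon(K_\ell)$ via $i_{\lambda_0}$, coincides with $P_{\psi_\sigma}^\epsilon$. After the index change $\sigma\mapsto\sigma^{-1}$, the sum becomes $(\partial_\ell\otimes\mathrm{id})(P_\chi^\epsilon)$, which matches $(\partial_\ell'\otimes\mathrm{id})(P_\chi^\epsilon)$ once the rigid-analytic definition of $\partial_\ell$ is translated, through Theorem \ref{teo-rep}(2) and the analysis of \S \ref{sec-galois-rep}, into the singular projection of the local Kummer map.

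Combining the two steps with Theorem \ref{rec-law} then yields the desired non-vanishing of $d_\ell^\chi(\kappa(\ell))$. The main technical obstacle is the identification in the second step, specifically the reconciliation of the Galois-twist by $\chi$ on the cohomology side with the embedding-twist by $\chi^{-1}$ built into the definition of $P_\chi^\epsilon$ in \eqref{tau-chi}; this is precisely the compatibility encoded in the reciprocity part of Conjecture \ref{SH-conjecture}, and it requires a careful bookkeeping of the parametrization $\sigma\mapsto\psi_\sigma$ and the compatible choice of local parameters at the primes $\sigma(\lambda_0)$ above $\ell$.
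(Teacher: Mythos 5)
Your proposal follows essentially the same route as the paper's proof: both reduce the claim via the commutative diagram relating $\bar\kappa$, $\partial_\ell$, and the singular projection to the equality $d_\ell^\chi(\kappa(\ell))=((\vartheta_\ell\circ\partial_\ell)\otimes\iota)(P_\chi^\epsilon)$, and then conclude from Theorem \ref{rec-law} together with the admissibility of $\ell$ (so $p\nmid t_\ell$) and the choice of $\fr p$ in \eqref{alg-nonzero-eq} guaranteeing $\cl L_K(E,\chi,1)\not\equiv 0\pmod{\fr p}$. The only difference in exposition is that the paper delegates the identification of $d_\ell^\chi(\kappa(\ell))$ with $(\partial_\ell\otimes\iota)(P_\chi^\epsilon)$ to the arguments of \cite[\S\S 9.1--9.3]{LV}, whereas you unwind that chain of identifications explicitly (the $\Z[G_c]$-equivariant splitting over the primes above $\ell$, Conjecture \ref{SH-conjecture} to match $\sigma$-translates of $P_c$ with the points $P_{\psi_\sigma}^\epsilon$, and the reconciliation of the $\chi$ and $\chi^{-1}$ twists); you flag the bookkeeping subtleties there correctly, and those are indeed the only delicate points.
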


\begin{proof} Let $\iota:\Z[\chi]_S\hookrightarrow\W$ be the natural inclusion (cf. Lemma \ref{completions-lemma}). There is a commutative square
\[ \xymatrix@C=30pt@R=30pt{J_\epsilon^{(\ell)}(K_\ell)\ar[r]^-{\bar\kappa}\ar[d]^-{\partial_\ell}&H^1(K_\ell,E[p])\ar@{->>}[d]^-{\delta_\ell}\\
                           \Phi_{\ell,\epsilon}/\m_{f_\ell}\ar[r]^-{\vartheta_\ell}_-{\simeq}&H^1_{\rm sing}(K_\ell,E[p])} \]
in which $\delta_\ell$ is the projection and $\vartheta_\ell$ is isomorphism \eqref{Phi-sing-isom-eq}. Tensoring with $\Z[\chi]_S$ over $\Z$ and then composing with the relevant maps ${\rm id}\otimes\iota$ yields a commutative diagram
\begin{equation} \label{comm-square-III}
\xymatrix@C=35pt@R=35pt{J_\epsilon^{(\ell)}(K_\ell)\otimes\Z[\chi]_S\ar[r]^-{\bar\kappa\otimes{\rm id}}\ar[d]^-{\partial_\ell\otimes{\rm id}} &
H^1(K_\ell,E[p])\otimes\Z[\chi]_S\ar@{->>}[d]^-{\delta_\ell\otimes{\rm id}}\ar[r]^-{{\rm id}\otimes\iota} & H^1(K_\ell,E[p])\otimes\W \ar@{->>}[d]^-{\delta_\ell\otimes{\rm id}}\\
\Phi_{\ell,\epsilon}/\m_{f_\ell}\otimes\Z[\chi]_S\ar[r]^-{\vartheta_\ell\otimes{\rm id}}_-\simeq\ar[d]^-{{\rm id}\otimes\iota} & {H^1_{\rm sing}(K_\ell,E[p])\otimes\Z[\chi]_S}\ar[r]^-{{\rm id}\otimes\iota} & H^1_{\rm sing}(K_\ell,E[p])\otimes\W \\
\Phi_{\ell,\epsilon}/\m_{f_\ell}\otimes\W\ar[urr]_-{\vartheta_\ell\otimes{\rm id}}^-\simeq}
\end{equation}
The arguments described in \cite[\S\S 9.1--9.3]{LV} show that
\[ d_\ell^\chi\bigl(\kappa(\ell)\bigr)=\bigl((\vartheta_\ell\circ\partial_\ell)\otimes\iota\bigr)(P_\chi^\epsilon) \]
where $P_\chi^\epsilon$ is defined in \eqref{tau-chi}. Since $\vartheta_\ell\otimes{\rm id}$ is an isomorphism, showing that $d_\ell^\chi\bigl(\kappa(\ell)\bigr)\not=0$ is equivalent to showing that
\begin{equation} \label{claim}
\text{$(\partial_\ell\otimes\iota)(P_\chi^\epsilon)\not=0$}\qquad\text{in $\Phi_{\ell,\epsilon}/\m_{f_\ell}\otimes\W\simeq\W/p\W$}
\end{equation}
(here the map $\partial_\ell\otimes\iota$ is equal to the composition of the left vertical arrows in \eqref{comm-square-III}).

In order to prove \eqref{claim} consider the map
\[ \Z[\chi]_S/p\Z[\chi]_S\overset\iota\longrightarrow\W/p\W \]
induced by $\iota$. The non-vanishing of $L_K(E,\chi,1)$ is equivalent, by Theorem \ref{popa-thm}, to the non-vanishing of $\cl L_K(E,\chi,1)$. On the other hand, $\iota\bigl(\bigl[\cl L_K(E,\chi,1)\bigr]\bigr)\not=0$ by \eqref{alg-nonzero-eq} and $p\nmid t_\ell$ because $\ell$ is admissible, hence claim \eqref{claim} follows from Theorem \ref{rec-law}. \end{proof}

\subsection{Local Tate pairings and global duality} \label{tate-subsec}

For every place $v$ of $\Q$, including the archimedean one, denote by
\[ \langle\,,\rangle_v:H^1(H_{c,v},E[p])\times H^1(H_{c,v},E[p])\longrightarrow\Z/p\Z \]
the local Tate pairing at $v$. Global Tate duality, which is a consequence of the reciprocity law of class field theory (specifically, of the global reciprocity law for elements in the Brauer group of $H_c$), asserts that
\begin{equation} \label{GRL1}
\sum_v\langle \res_v(k),\res_v(s)\rangle_v=0
\end{equation}
for all $k,s\in H^1(H_c,E[p])$. Actually, since the Brauer group of $\R$ has order $2$ and $p$ is odd by condition $2$ in Assumption \ref{ass}, for all $k,s\in H^1(H_c,E[p])$ one has
\begin{equation} \label{GRL}
\sum_q\langle\res_q(k),\res_q(s)\rangle_q=0
\end{equation}
with $q$ running over the set of prime numbers (in other words, in \eqref{GRL1} we can restrict the sum to the \emph{finite} places of $\Q$).

Let now $\ell$ be an admissible prime. As explained in \cite[\S 9.4]{LV}, the local Tate pairing $\langle\,,\rangle_\ell$ gives rise to isomorphisms of one-dimensional $\W/p\W$-vector spaces
\begin{equation} \label{iso-tate}
H^1_\star(H_{c,\ell},E[p])\otimes_\chi\W\overset{\simeq}{\longrightarrow}H^1_\bullet(H_{c,\ell},E[p])^\vee\otimes_\chi\W
\end{equation}
for $\{\star,\bullet\}=\{\text{fin, sing}\}$. Moreover, the restriction
\[ \res_\ell:\Sel_p(E/H_c)\longrightarrow H^1_{\rm fin}(H_{c,\ell},E[p]) \]
induces a $\W$-linear map
\[ \eta_\ell:H^1_{\rm sing}(H_{c,\ell},E[p])\otimes_\chi\W\longrightarrow\Sel_p(E/H_c)^\vee\otimes_\chi\W. \]
\begin{lemma} \label{aux-lemma-IV}
If there exists $s\in\Sel_p(E/H_c)[I_{\chi_p}]$ such that $\res_\ell(s)\neq 0$ then $\eta_\ell$ is non-zero.
\end{lemma}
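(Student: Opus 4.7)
The plan is to recognize $\eta_\ell$ as essentially the same map as $\res_\ell^\vee\otimes{\rm id}$ appearing in Proposition \ref{lemma-II}, up to an isomorphism coming from local Tate duality, and then invoke that proposition.

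More precisely, unravelling the definition given just before the lemma, the map $\eta_\ell$ factors as the composition
\[
H^1_{\rm sing}(H_{c,\ell},E[p])\otimes_\chi\W\;\overset{\simeq}{\longrightarrow}\;H^1_{\rm fin}(H_{c,\ell},E[p])^\vee\otimes_\chi\W\;\xrightarrow{\res_\ell^\vee\otimes{\rm id}}\;\Sel_p(E/H_c)^\vee\otimes_\chi\W,
\]
where the first arrow is the instance of \eqref{iso-tate} with $\star={\rm sing}$ and $\bullet={\rm fin}$, induced by the local Tate pairing $\langle\,,\rangle_\ell$. The first arrow is an isomorphism of one-dimensional $\W/p\W$-vector spaces (as recalled in \S \ref{tate-subsec}, this is precisely the content of \eqref{iso-tate}), so $\eta_\ell$ is non-zero if and only if $\res_\ell^\vee\otimes{\rm id}$ is non-zero.

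Under our hypothesis, the existence of $s\in\Sel_p(E/H_c)[I_{\chi_p}]$ with $\res_\ell(s)\neq 0$ is exactly the input needed to apply Proposition \ref{lemma-II}, which gives the non-vanishing of $\res_\ell^\vee\otimes{\rm id}$. Composing with the Tate duality isomorphism then yields $\eta_\ell\neq 0$. There is no real obstacle here: the whole content is the compatibility between the definition of $\eta_\ell$ via local Tate duality and the dualization statement of Proposition \ref{lemma-II}; once this bookkeeping is set up, the conclusion is immediate.
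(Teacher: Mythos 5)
Your proof is correct and takes essentially the same approach as the paper: the paper's one-line proof simply says "Immediate from \eqref{iso-tate} and Proposition \ref{lemma-II}", and your argument spells out exactly the factorization of $\eta_\ell$ through the Tate-duality isomorphism \eqref{iso-tate} and the map $\res_\ell^\vee\otimes{\rm id}$ that underlies that assertion.
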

\begin{proof} Immediate from \eqref{iso-tate} and Proposition \ref{lemma-II}. \end{proof}

In the next lemma the symbol $\delta_q$ stands for the local Kummer map at $q$.

\begin{lemma} \label{bad-kummer-lemma}
If $q$ is a prime dividing $N$ then ${\rm Im}(\delta_q)=0$.
\end{lemma}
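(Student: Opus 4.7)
The plan is to unwind the local Kummer map. By the Kummer exact sequence for $E$ over $H_{c,q}$, the image of $\delta_q$ is canonically identified with $E(H_{c,q})/pE(H_{c,q})$, so the lemma reduces to showing that $E(H_{c,q})$ is $p$-divisible. Since $q \mid N$ and condition $2$ in Assumption \ref{ass} ensures $p \nmid N$, we automatically have $q \neq p$, which is what makes the argument work.

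The first step is to filter $E(H_{c,q})$ by the formal group $\hat E(\m_q)$ attached to a Weierstrass (or Néron) model of $E$ over $\cl O_{H_{c,q}}$, giving a short exact sequence
\[ 0\longrightarrow \hat E(\m_q)\longrightarrow E(H_{c,q})\longrightarrow A\longrightarrow 0, \]
where $A$ is a finite group (an extension of the component group of the Néron model by the $k_q$-points of the non-singular part of the special fibre). Because $\m_q$ is a pro-$q$ ideal and $p\neq q$, multiplication by $p$ is invertible on $\hat E(\m_q)$; in particular $\hat E(\m_q)[p]=0$ and $\hat E(\m_q)/p\hat E(\m_q)=0$.

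Next I would run the snake lemma on multiplication by $p$ applied to the above sequence. The vanishing of both $\hat E(\m_q)[p]$ and $\hat E(\m_q)/p$ yields two canonical isomorphisms
\[ E(H_{c,q})[p]\overset{\simeq}{\longrightarrow} A[p],\qquad E(H_{c,q})/pE(H_{c,q})\overset{\simeq}{\longrightarrow}A/pA. \]
Since $A$ is finite, $|A[p]|=|A/pA|$, and therefore $|E(H_{c,q})/pE(H_{c,q})|=|E(H_{c,q})[p]|$.

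Finally, $E(H_{c,q})[p]$ sits inside $E(H_{c,q})_{\rm tors}$, whose order is prime to $p$ by condition $5$ of Assumption \ref{ass} (applied to $\fr q\mid DM=N$ above $q$). Hence $E(H_{c,q})[p]=0$ and so $E(H_{c,q})/pE(H_{c,q})=0$, i.e.\ ${\rm Im}(\delta_q)=0$. The only delicate point is keeping track of which input produces which vanishing: the condition $p\neq q$ kills the formal group contribution, while the torsion hypothesis of Assumption \ref{ass} kills the residual finite quotient; once both are in place the proof is a formal diagram chase.
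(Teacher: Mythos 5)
Your proof is correct and fills in exactly the reasoning that the paper compresses into one line. The paper's own proof simply notes that $\delta_q$ factors through $E(H_{c,q})/pE(H_{c,q})$ and then cites condition $5$ of Assumption \ref{ass}; the implicit chain of ideas — that $q\neq p$ (via condition $2$) makes the formal group uniquely $p$-divisible, so $|E(H_{c,q})/p|=|E(H_{c,q})[p]|$, and the latter vanishes by the torsion hypothesis — is precisely what you spelled out.
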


\begin{proof} Since $\delta_q$ factors through $E(H_{c,q})/pE(H_{c,q})$, the statement follows from condition $5$ in Assumption \ref{ass}. \end{proof}

Now recall the map $d_\ell^\chi$ defined in \eqref{d-chi-ell-eq}.

\begin{proposition} \label{kernel-eta-prop}
The element $d_\ell^\chi\bigl(\kappa(\ell)\bigr)$ belongs to the kernel of $\eta_\ell$.
\end{proposition}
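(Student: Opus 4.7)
The plan is to unwind the definition of $\eta_\ell$ and translate the claim into an identity of local Tate pairings. Concretely, via the isomorphism \eqref{iso-tate} and the construction in \S\ref{tate-subsec}, the value of $\eta_\ell\bigl(d_\ell^\chi(\kappa(\ell))\bigr)$ on a Selmer class $s\in\Sel_p(E/H_c)$ is, up to the $\chi$-twisted extension of scalars to $\W$, a $G_c$-equivariant incarnation of the pairing $\langle\res_\ell(\kappa(\ell)),\res_\ell(s)\rangle_\ell$. Moreover, since $\res_\ell(s)\in H^1_{\rm fin}(H_{c,\ell},E[p])$ by the Selmer condition and the finite part is its own annihilator under the local Tate pairing at the admissible prime $\ell$, only the singular component of $\res_\ell(\kappa(\ell))$, which is precisely what $d_\ell^\chi$ records, contributes. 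Hence it is enough to prove the identity $\langle\res_\ell(\kappa(\ell)),\res_\ell(s)\rangle_\ell=0$ for every $s\in\Sel_p(E/H_c)$ and then propagate this vanishing $\Z[G_c]$-bilinearly to $\W$-coefficients.

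The central input will be the global duality relation \eqref{GRL}, which rearranges to
\[
\bigl\langle\res_\ell(\kappa(\ell)),\res_\ell(s)\bigr\rangle_\ell = -\sum_{q\neq\ell}\bigl\langle\res_q(\kappa(\ell)),\res_q(s)\bigr\rangle_q.
\]
The next step is to kill every summand on the right. For $q\mid N$, Lemma \ref{bad-kummer-lemma} gives ${\rm Im}(\delta_q)=0$, so the Selmer condition forces $\res_q(s)=0$ and the $q$-term vanishes trivially. For $q\nmid N\ell$ (this covers in particular all primes above $p$), the Proposition on local Kummer maps stated immediately before \S\ref{preliminaries-subsec} applies to $P_c\in J_\epsilon^{(\ell)}(H_c)$ and yields $\res_q(\kappa(\ell))\in{\rm Im}(\delta_q)$; combined with $\res_q(s)\in{\rm Im}(\delta_q)$ and the well-known fact that ${\rm Im}(\delta_q)$ is maximal isotropic under the local Tate pairing (the unramified cohomology at primes of good reduction away from $p$, and the flat/crystalline subgroup at $q\mid p$, where the bound $e_\p<p-1$ needed to invoke the cited proposition is automatic since $H_c$ is unramified at $p$ by Assumption \ref{ass}), the corresponding pairing also vanishes.

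The step I expect to be the main technical subtlety is making the $\chi$-twisted $\W$-extension fully rigorous: the map $d_\ell^\chi$ bundles together the singular projection and the tensor product over $\Z[G_c]$ via $\chi$, so one must carefully verify that the local Tate pairings at $\ell$, originally landing in $\F_p$, extend to the $\W$-modules appearing in the statement of the proposition exactly as the definition of $\eta_\ell$ in \S\ref{tate-subsec} requires. This is essentially formal, paralleling the pattern developed in \cite{LV}, but the $G_c$-equivariance of the isomorphisms \eqref{iso-tate} has to be tracked to ensure that the vanishing of the underlying $\F_p$-valued pairing translates into the vanishing of $\eta_\ell\bigl(d_\ell^\chi(\kappa(\ell))\bigr)$ in $\Sel_p(E/H_c)^\vee\otimes_\chi\W$.
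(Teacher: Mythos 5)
Your proposal is correct and follows essentially the same route as the paper, whose proof is simply a pointer to \cite[Proposition 9.6]{LV} together with Lemma \ref{bad-kummer-lemma} and the global duality identity \eqref{GRL}: one expresses $\eta_\ell\bigl(d_\ell^\chi(\kappa(\ell))\bigr)$ through the local Tate pairing at $\ell$, rewrites $\langle\res_\ell(\kappa(\ell)),\res_\ell(s)\rangle_\ell$ via global reciprocity as minus the sum over $q\neq\ell$, and kills each summand using either the vanishing of ${\rm Im}(\delta_q)$ for $q\mid N$ or the fact that both classes lie in the isotropic subspace ${\rm Im}(\delta_q)$ for $q\nmid N\ell$. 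One small point worth tightening: the Kummer map proposition you invoke is stated for all $q\nmid M\ell$ (not $q\nmid N\ell$), so it in fact covers the primes $q\mid D$ as well; your argument is still complete because you handle all $q\mid N$ via Lemma \ref{bad-kummer-lemma}, but the stated range of applicability is slightly wider than you indicated.
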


\begin{proof} Keeping Lemma \ref{bad-kummer-lemma} and formula \eqref{GRL} in mind, proceed exactly as in the proof of \cite[Proposition 9.6]{LV}. \end{proof}

\subsection{Proof of the first vanishing result} \label{main-subsec}

As a first arithmetic consequence of Theorem \ref{rec-law}, we prove a vanishing result for twisted Selmer groups: all other results will follow from this one. Recall that we are assuming Conjecture \ref{SH-conjecture} throughout.

\begin{theorem} \label{main-thm}
If $L_K(E,\chi,1)\not=0$ then $\Sel_p(E/H_c)\otimes_\chi\W=0$.
\end{theorem}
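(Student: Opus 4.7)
The plan is to run a Kolyvagin-style descent with the Euler system $\{\kappa(\ell)\}$ of Darmon points as the key input, using the Gross--Zagier-type reciprocity law of Theorem \ref{rec-law} (and its consequence Proposition \ref{nonvanishing-prop}) to derive a contradiction. First, I would argue by contradiction: suppose $\Sel_p(E/H_c)\otimes_\chi\W$ is non-zero. By the flatness statement in Lemma \ref{alg-2} and the identifications recalled in \S \ref{preliminaries-subsec}, this is equivalent to the existence of a non-zero element $s\in\Sel_p(E/H_c)[I_{\chi_p}]$.

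With such an $s$ in hand, I would invoke Proposition \ref{existence-admissible-primes} with the sign choice $\delta=-1$ to produce an admissible prime $\ell$ satisfying $p\mid a_\ell+\ell+1$ and $\res_\ell(s)\neq 0$. Fixing $\delta=-1$ is essential, since this is precisely the hypothesis under which Theorem \ref{rec-law} applies, so that the cohomology class $\kappa(\ell)=\bar\kappa(P_c)\in H^1(H_c,E[p])$ built from the Darmon point carries the required analytic information. Note that Conjecture \ref{SH-conjecture}, assumed here, is needed in order for $P_c\in J^{(\ell)}_\epsilon(H_c)$ and hence for $\kappa(\ell)$ to be defined globally.

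The contradiction then emerges from two opposing pieces of information about the element $d_\ell^\chi(\kappa(\ell))\in H^1_{\rm sing}(H_{c,\ell},E[p])\otimes_\chi\W$. On one side, Proposition \ref{nonvanishing-prop} (which in turn rests on Theorem \ref{rec-law} and the assumption $L_K(E,\chi,1)\neq 0$) gives $d_\ell^\chi(\kappa(\ell))\neq 0$, while Proposition \ref{kernel-eta-prop} places $d_\ell^\chi(\kappa(\ell))\in\ker(\eta_\ell)$. On the other side, Lemma \ref{aux-lemma-IV} (applied to the non-zero $s$) asserts that the map $\eta_\ell$ is non-zero. But by the isomorphism \eqref{iso-tate} its source $H^1_{\rm sing}(H_{c,\ell},E[p])\otimes_\chi\W$ is one-dimensional over the residue field $\W/p\W$, so a non-zero $\W$-linear map on it must be \emph{injective}. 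This is incompatible with the preceding two bullets, completing the argument.

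The conceptual obstacles have all been dispatched earlier in the paper: the construction of the Darmon-point Euler system, the explicit reciprocity law of Theorem \ref{rec-law} translating analytic non-vanishing into the geometric non-vanishing of $d_\ell^\chi(\kappa(\ell))$, and the Chebotarev-type existence result of Proposition \ref{existence-admissible-primes} producing admissible primes that detect a prescribed Selmer class. Once these inputs are available, the final descent is essentially formal and mirrors the strategy of \cite{LV} in the imaginary quadratic setting, the only additional bookkeeping being the twist by $\chi$ and the passage to the one-dimensional $\W/p\W$-vector spaces obtained through the tensor product $\otimes_\chi\W$.
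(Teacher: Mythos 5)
Your proposal is correct and follows the paper's own proof essentially verbatim: reduce to showing $\Sel_p(E/H_c)[I_{\chi_p}]=0$, pick an admissible $\ell$ with $p\mid a_\ell+\ell+1$ and $\res_\ell(s)\neq 0$ via Proposition \ref{existence-admissible-primes}, and derive the contradiction from Propositions \ref{nonvanishing-prop} and \ref{kernel-eta-prop} together with Lemma \ref{aux-lemma-IV} and the one-dimensionality of $H^1_{\rm sing}(H_{c,\ell},E[p])\otimes_\chi\W$. One small bookkeeping slip: the condition $p\mid a_\ell+\ell+1$ corresponds to $\delta=+1$ in the convention of Proposition \ref{existence-admissible-primes} (namely $p\mid a_\ell+\delta(\ell+1)$), whereas $\delta=-1$ refers to the opposite-sign convention $p\mid a_\ell-\delta(\ell+1)$ used in Theorem \ref{cong-thm} and Theorem \ref{rec-law}; the underlying congruence you use is the right one, so this does not affect the argument.
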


\begin{proof} By what was said in \S \ref{preliminaries-subsec}, it is enough to show that $\Sel_p(E/H_c)[I_{\chi_p}]=0$. Assume that $s\in\Sel_p(E/H_c)[I_{\chi_p}]$ is not zero and choose an admissible prime $\ell$ such that $p|a_\ell+\ell+1$ and $\res_\ell(s)\not=0$, which exists by Proposition \ref{existence-admissible-primes}. Since $L_K(E,\chi,1)\not=0$, Proposition \ref{nonvanishing-prop} ensures that $d_\ell^\chi\bigl(\kappa(\ell)\bigr)\not=0$; then $d_\ell^\chi\bigl(\kappa(\ell)\bigr)$ generates $H^1_{\rm sing}(H_{c,\ell},E[p])\otimes_\chi\W$ over $\W$. On the other hand, Proposition \ref{kernel-eta-prop} says that $d_\ell^\chi\bigl(\kappa(\ell)\bigr)$ belongs to the kernel of the $\W$-linear map $\eta_\ell$, and this contradicts the non-triviality of $\eta_\ell$ that was shown in Lemma \ref{aux-lemma-IV}. \end{proof}

By exploiting the surjectivity of the representation $\rho_{E,p}$ (condition $3$ in Assumption \ref{ass}) and the flatness of $\W$ over $\Z[G_c]$ (Lemma \ref{alg-2}), formal algebraic considerations yield also the following reformulation of Theorem \ref{main-thm}.

\begin{theorem} \label{main2-thm}
If $L_K(E,\chi,1)\not=0$ then
\[ \Sel_{p^n}(E/H_c)\otimes_\chi\W=0 \]
for all integers $n\geq1$.
\end{theorem}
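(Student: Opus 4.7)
The plan is to prove the theorem by induction on $n\geq 1$, with the base case $n=1$ being Theorem \ref{main-thm}.

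For the inductive step, I would use the short exact sequence of $G_\Q$-modules
\[
0\longrightarrow E[p^n]\longrightarrow E[p^{n+1}]\stackrel{p^n}{\longrightarrow}E[p]\longrightarrow 0,
\]
where the first arrow is the canonical inclusion and the second is multiplication by $p^n$. Before taking cohomology, I would first observe that $H^0(H_c,E[p])=0$: since $H_c/\Q$ is Galois, $G_{H_c}$ is normal in $G_\Q$, and so condition 3 of Assumption \ref{ass} (surjectivity of $\rho_{E,p}$) forces $\rho_{E,p}(G_{H_c})$ to be a normal subgroup of $\GL_2(\F_p)$ with solvable quotient (as $\Gal(H_c/\Q)$ is solvable, being an extension of $\Gal(K/\Q)$ by $G_c$). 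For $p\geq 5$ the only such normal subgroups contain $\SL_2(\F_p)$, and $\SL_2(\F_p)$ acts without non-zero fixed vectors on $\F_p^2$.

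Taking Galois cohomology over $H_c$ would then produce the left exact sequence
\[
0\longrightarrow H^1(H_c,E[p^n])\longrightarrow H^1(H_c,E[p^{n+1}])\longrightarrow H^1(H_c,E[p]).
\]
By a careful tracking of the compatibility between local Kummer maps at the various levels $p$, $p^n$ and $p^{n+1}$, this restricts to an exact sequence of Selmer groups
\[
0\longrightarrow \Sel_{p^n}(E/H_c)\longrightarrow\Sel_{p^{n+1}}(E/H_c)\longrightarrow\Sel_p(E/H_c).
\]
Since $\W$ is flat over $\Z[G_c]$ by Lemma \ref{alg-2}, tensoring with $\W$ via the character $\chi$ will preserve exactness. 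The inductive hypothesis eliminates the leftmost term and Theorem \ref{main-thm} eliminates the rightmost term, forcing $\Sel_{p^{n+1}}(E/H_c)\otimes_\chi\W=0$.

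The main technical obstacle will be establishing the exact sequence of Selmer groups: one must verify that the kernel of the map $\Sel_{p^{n+1}}(E/H_c)\to\Sel_p(E/H_c)$ induced by multiplication by $p^n$ coincides with the image of $\Sel_{p^n}(E/H_c)$ under the natural map. This amounts to a local statement at each place $v$ of $H_c$, asserting that an element of $H^1(H_{c,v},E[p^{n+1}])$ which lies in the image of $E(H_{c,v})/p^{n+1}$ and also in the image of $H^1(H_{c,v},E[p^n])$ must come from the image of $E(H_{c,v})/p^n$; this is a routine but delicate check in local Galois cohomology using the compatibility $\kappa^v_{n}(pP)=\iota\circ\kappa^v_{n-1}(P)$ between Kummer maps at successive levels, together with the fact that $\kappa^v_{n}(P)$ is killed by $p^{n-1}$ in $H^1(H_{c,v},E[p])$ exactly when $P\in pE(H_{c,v})$.
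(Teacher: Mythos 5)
Your proposal is correct and is exactly the kind of dévissage argument the paper has in mind: the paper itself gives no details, simply noting that the result follows ``by exploiting the surjectivity of $\rho_{E,p}$ and the flatness of $\W$'' and referring to \cite[Theorem 9.8]{LV}, and these are precisely the two ingredients you invoke (surjectivity of $\rho_{E,p}$ yields $H^0(H_c,E[p])=0$, used to get left-exactness of the Selmer sequence, and flatness of $\W$ is used to preserve exactness after tensoring). Two small remarks: the vanishing of $H_c$-rational $p$-torsion that you re-derive from the structure of normal subgroups of $\GL_2(\F_p)$ is already quoted elsewhere in the paper via \cite[Lemma 4.3]{Gross} (proof of Lemma \ref{eigen-sel-lem}), so you could cite it directly, and your local compatibility check at the end is correct once one notes that $\ker\bigl(H^1(H_{c,v},E[p^n])\to H^1(H_{c,v},E[p^{n+1}])\bigr)$ equals the image of the connecting map from $H^0(H_{c,v},E[p])$, which is precisely the Kummer image of the $p$-torsion points and hence is contained in the image of the local Kummer map at level $p^n$.
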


The reader is referred to \cite[Theorem 9.8]{LV} for details.

\subsection{Applications} \label{applications-subsec}

In this subsection let $K'$ be an extension of $K$ contained in $H_c$ and let
\[ \lambda:\Gal(K'/K)\longrightarrow\C^\times \]
be a character. Adopting the usual notation for twisted $L$-functions and eigenspaces, the first consequence of Theorem \ref{main-thm} is the following

\begin{theorem} \label{cons1-thm}
If $L_K(E,\lambda,1)\not=0$ then $E(K')^\lambda=0$.
\end{theorem}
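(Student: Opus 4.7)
The plan is to reduce Theorem \ref{cons1-thm} to the vanishing statement of Theorem \ref{main-thm}, combined with Proposition \ref{alg-4}, by pulling the character $\lambda$ back to a character of $G_c$. Since $K'\subseteq H_c$ and $K'$ is Galois over $K$, there is a natural surjection $\pi\colon G_c\twoheadrightarrow\Gal(K'/K)$, so I would first set
\[ \chi:=\lambda\circ\pi\in\widehat{G}_c. \]
By the invariance of Artin $L$-functions under inflation, $L_K(E,\chi,s)=L_K(E,\lambda,s)$, hence the hypothesis $L_K(E,\lambda,1)\neq0$ becomes $L_K(E,\chi,1)\neq0$.

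Next, I would fix a prime number $p$ satisfying Assumption \ref{ass} relative to the pair $(E,\chi)$; such a $p$ exists because only finitely many primes are excluded. Applying Theorem \ref{main-thm} to this $p$ and $\chi$ yields
\[ \Sel_p(E/H_c)\otimes_\chi\W=0, \]
where $\W$ denotes the completion of $\Z[\chi]$ at a prime above $p$ as in \S \ref{preliminaries-subsec}. Feeding this vanishing into Proposition \ref{alg-4} gives $E(H_c)^\chi=0$.

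It remains to relate $E(K')^\lambda$ to $E(H_c)^\chi$. Since $K'$ is Galois over $K$ and contained in $H_c$, the Mordell--Weil group $E(K')$ is a $G_c$-stable subgroup of $E(H_c)$ on which $G_c$ acts through its quotient $\Gal(K'/K)$. Consequently, the injection $E(K')\otimes_\Z\C\hookrightarrow E(H_c)\otimes_\Z\C$ restricts to an inclusion
\[ E(K')^\lambda\;\longmono\;E(H_c)^\chi, \]
because $\sigma(x)=\pi(\sigma)(x)=\lambda(\pi(\sigma))x=\chi(\sigma)x$ for every $x\in E(K')^\lambda$ and $\sigma\in G_c$. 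The vanishing of $E(H_c)^\chi$ then forces $E(K')^\lambda=0$, completing the proof.

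Neither step presents a serious obstacle: the inflation compatibility for $L$-functions is standard, and the compatibility of $\chi$-isotypic subspaces under the inclusion $E(K')\subseteq E(H_c)$ is essentially a tautology once one unpacks the definitions. The only point requiring some care is the verification that Assumption \ref{ass} can indeed be arranged for the inflated character $\chi$, but this is immediate from the fact that $\cl L_K(E,\chi,1)\neq0$ by Theorem \ref{popa-thm} (which follows from $L_K(E,\chi,1)\neq0$) together with the finiteness of each of the exclusions imposed in Assumption \ref{ass}.
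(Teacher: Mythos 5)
Your proposal is correct and follows essentially the same route as the paper: inflate $\lambda$ to a character $\chi$ of $G_c$, transfer the non-vanishing hypothesis to $L_K(E,\chi,1)$, invoke Theorem \ref{main-thm} and Proposition \ref{alg-4} to kill $E(H_c)^\chi$, and conclude via the inclusion $E(K')^\lambda\hookrightarrow E(H_c)^\chi$. The only small imprecision is your claim of an exact equality $L_K(E,\chi,s)=L_K(E,\lambda,s)$; the paper more carefully observes that the two $L$-functions agree only up to finitely many Euler factors (the conductor of $\chi$ as a character of $G_c$ may differ from that of $\lambda$), which is still enough to deduce the non-vanishing at $s=1$.
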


\begin{proof} Let $\chi\in\widehat G_c$ be the character induced by $\lambda$ in the obvious way, so that there is an equality of twisted $L$-functions
\[ L_K(E,\chi,s)=L_K(E,\lambda,s) \]
up to finitely many Euler factors (cf., e.g., \cite[\S 7]{zh2}). Therefore $L_K(E,\chi,1)\not=0$, whence $E(H_c)^\chi=0$ by a combination of Proposition \ref{alg-4} and Theorem \ref{main-thm}. But there is a natural inclusion $E(K')^\lambda\subset E(H_c)^\chi$, and the theorem is proved. \end{proof}

Theorem \ref{cons1-thm} is the $\lambda$-twisted conjecture of Birch and Swinnerton-Dyer for $E$ over $K'$ in the case of analytic rank $0$. In fact, under this analytic condition Theorem \ref{main-thm} also yields a vanishing result for the groups $\Sel_p(E/K')$ for all prime numbers $p$ satisfying Assumption \ref{ass} (recall that this excludes only finitely many primes). As will be clear, to obtain this it is crucial that we were able to prove Theorem \ref{main-thm} for \emph{all} complex-valued characters $\chi$ of $G_c$.

To begin with, we need some further notation and an auxiliary result. Let $\Q_p^{\rm nr}$ be the maximal unramified extension of $\Q_p$, let $\cO_{\Q_p^{\rm nr}}$ be its ring of integers and let $\kappa_p$ be its residue field (which is an algebraic closure of $\F_p$). In order to avoid confusion, for every $\chi\in\widehat G_c$ denote $\W_\chi$ the ring $\W$ associated with $\chi$ as in \S \ref{preliminaries-subsec}. Finally, since every $\W_\chi$ is a finite unramified extension of $\Z_p$, for all $\chi$ we can (and do) fix embeddings $\W_\chi\hookrightarrow\cO_{\Q_p^{\rm nr}}$, which endow $\kappa_p$ with a structure of $\W_\chi$-module. Then define
\[ \Sel_p(E/K')^\lambda:=\bigl\{x\in \Sel_p(E/K')\otimes_\Z\kappa_p\mid\text{$\sigma(x)=\lambda(\sigma)x$ for all $\sigma\in\Gal(K'/K)$}\bigr\}. \]
From here on let $p$ be a prime satisfying Assumption \ref{ass}.

\begin{lemma} \label{eigen-sel-lem}
If $L_K(E,\lambda,1)\not=0$ then $\Sel_p(E/K')^\lambda=0$.
\end{lemma}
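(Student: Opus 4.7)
The plan is to deduce this lemma from the vanishing result of Theorem \ref{main-thm} by relating the $\lambda$-eigenspace of $\Sel_p(E/K')$ to a $\chi$-component of $\Sel_p(E/H_c)$ for a suitable character $\chi$ of $G_c$ extending $\lambda$.

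First I would inflate $\lambda$ to a character $\chi := \chi_\lambda \in \widehat G_c$ via the natural projection $G_c \twoheadrightarrow \Gal(K'/K)$. Arguing as in the proof of Theorem \ref{cons1-thm} (via \cite[\S 7]{zh2}), the twisted $L$-functions $L_K(E,\chi,s)$ and $L_K(E,\lambda,s)$ agree up to finitely many Euler factors, so the non-vanishing hypothesis $L_K(E,\lambda,1)\neq 0$ gives $L_K(E,\chi,1)\neq 0$, and Theorem \ref{main-thm} yields $\Sel_p(E/H_c)\otimes_\chi \W = 0$.

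Next I would compare Selmer groups over $K'$ and over $H_c$. By condition~$2$ in Assumption \ref{ass} the prime $p$ does not divide $h^+(c)$, hence a fortiori $p \nmid [H_c:K']$. The inflation-restriction sequence then kills the cohomology of $\Gal(H_c/K')$ with $\F_p$-coefficients, so the restriction map induces an isomorphism
\[
H^1(K',E[p])\overset\simeq\longrightarrow H^1(H_c,E[p])^{\Gal(H_c/K')}.
\]
A routine check, place by place, that local Kummer (finite) conditions are preserved under this restriction (again using that the relevant local cohomology of $\Gal(H_c/K')$ vanishes because its order is prime to $p$) upgrades this to an isomorphism of Selmer groups
\[
\Sel_p(E/K')\overset\simeq\longrightarrow\Sel_p(E/H_c)^{\Gal(H_c/K')}
\]
that is $\Gal(K'/K)$-equivariant.

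Finally, I would pass to eigenspaces after tensoring with $\kappa_p$. Since $|G_c|=h^+(c)$ is coprime to $p$, the group algebra $\kappa_p[G_c]$ is semisimple and $\Sel_p(E/H_c)\otimes_\Z\kappa_p$ decomposes as a direct sum of its $\chi'$-eigenspaces for $\chi'\in\widehat G_c$. A character $\chi'$ contributes to the $\Gal(H_c/K')$-invariants if and only if it factors through $\Gal(K'/K)$; among those, only $\chi$ itself contributes to the $\lambda$-eigenspace. Thus
\[
\Sel_p(E/K')^\lambda \simeq \bigl(\Sel_p(E/H_c)\otimes_\Z\kappa_p\bigr)^{\chi} = \Sel_p(E/H_c)\otimes_{\chi}\kappa_p,
\]
where the last equality is the usual identification $M^{\chi} = M\otimes_\chi\kappa_p$ for $\F_p[G_c]$-modules (analogous to \cite[Proposition 8.1]{LV}, using Lemma \ref{alg-2}). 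But $\Sel_p(E/H_c)\otimes_\chi\kappa_p = \bigl(\Sel_p(E/H_c)\otimes_\chi\W\bigr)\otimes_\W\kappa_p$ vanishes by the first step, proving the lemma.

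The main (and essentially only) subtle point will be step two: verifying that the inflation-restriction isomorphism at the level of global $H^1$ genuinely descends to an isomorphism of Selmer groups, i.e. that the local Kummer conditions at every place of $K'$ match the $\Gal(H_c/K')$-invariants of the local Kummer conditions at places of $H_c$ above them. This is a standard but somewhat delicate local analysis, resting once more on the coprimality $p\nmid[H_c:K']$ combined, where needed, with condition $5$ in Assumption \ref{ass} to control the image of local Kummer maps at bad places.
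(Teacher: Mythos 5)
Your proof is correct and follows the same broad strategy as the paper (inflate $\lambda$ to $\chi\in\widehat G_c$, invoke Theorem \ref{main-thm}, compare Selmer groups over $K'$ and $H_c$, then project to eigenspaces using Maschke), but the comparison step is organized differently, and in a way that creates avoidable work. You aim for a full isomorphism $\Sel_p(E/K')\simeq\Sel_p(E/H_c)^{\Gal(H_c/K')}$, using $p\nmid[H_c:K']$ to kill the higher cohomology of $\Gal(H_c/K')$ in inflation–restriction, and you rightly flag that upgrading the resulting isomorphism on $H^1$ to one on Selmer groups requires a place-by-place check that local conditions descend. The paper avoids this check entirely: since the goal is vanishing, only an \emph{injection} $\Sel_p(E/K')\hookrightarrow\Sel_p(E/H_c)$ is needed, and that injection is immediate. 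Concretely, the paper invokes the surjectivity of $\rho_{E,p}$ (condition~$3$ in Assumption \ref{ass}) and \cite[Lemma 4.3]{Gross} to see that $E(H_c)[p]=0$, so $H^1(\Gal(H_c/K'),E[p]^{G_{H_c}})=0$ and restriction $H^1(K',E[p])\to H^1(H_c,E[p])$ is injective; this restriction carries Selmer classes to Selmer classes by functoriality of the local Kummer images, giving $\Sel_p(E/K')\hookrightarrow\Sel_p(E/H_c)$ and hence $\Sel_p(E/K')^\lambda\hookrightarrow\Sel_p(E/H_c)^\chi$. With $\Sel_p(E/H_c)^\chi=0$ from Theorem \ref{main-thm} plus Maschke, the injection concludes the proof. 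Your eigenspace bookkeeping in the last step (only $\chi$ among characters factoring through $\Gal(K'/K)$ contributes to the $\lambda$-eigenspace) is fine, and your identification $\Sel_p(E/H_c)\otimes_\chi\kappa_p=(\Sel_p(E/H_c)\otimes_\chi\W)\otimes_\W\kappa_p$ matches the paper's. In short: nothing is wrong, but you can delete the ``subtle point'' you worry about at the end — replace the isomorphism claim by the easy injection and the argument closes without any delicate local analysis.
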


\begin{proof} Let $\chi\in\widehat G_c$ be the character induced by $\lambda$. Then, as in the proof of Theorem \ref{cons1-thm}, $L_K(E,\chi,1)\not=0$, whence $\Sel_p(E/H_c)\otimes_\chi\W_\chi=0$ by Theorem \ref{main-thm}. Since $p\nmid h^+(c)$, one can apply Maschke's theorem to the $G_c$-representation $\Sel_p(E/H_c)\otimes_\Z\kappa_p$ and mimic the proof of \cite[Proposition 8.1]{LV} to obtain an identification
\[ \Sel_p(E/H_c)^\chi=\Sel_p(E/H_c)\otimes_\chi\kappa_p \]
of $\kappa_p[G_c]$-modules. Thus we get that
\begin{equation} \label{eq1-eq}
\Sel_p(E/H_c)^\chi=\bigl(\Sel_p(E/H_c)\otimes_\chi\W_\chi\bigr)\otimes_{\W_\chi}\kappa_p=0.
\end{equation}
On the other hand, as explained in \cite[Lemma 4.3]{Gross}, the surjectivity of $\rho_{E,p}$ ensures that $E$ has no non-trivial $p$-torsion rational over $H_c$, and then the inflation-restriction exact sequence in Galois cohomology gives an injection $\Sel_p(E/K')\hookrightarrow\Sel_p(E/H_c)$, which in turn induces an injection
\begin{equation} \label{eq2-eq}
\Sel_p(E/K')^\lambda\;\longmono\;\Sel_p(E/H_c)^\chi
\end{equation}
of eigenspaces. The lemma follows by combining \eqref{eq1-eq} and \eqref{eq2-eq}. \end{proof}

Let now $L_{K'}(E,s)$ be the $L$-function of $E$ over $K'$.

\begin{theorem} \label{cons2-thm}
If $L_{K'}(E,1)\not=0$ then
\[ \Sel_{p^n}(E/K')=0 \]
for all integers $n\geq1$.
\end{theorem}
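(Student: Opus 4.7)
The plan is to combine the twisted vanishing encoded in Lemma~\ref{eigen-sel-lem}, applied over all characters of $\Gal(K'/K)$, with a standard d\'evissage on the $p$-power torsion of $E$. First I would invoke the factorization
\[
L_{K'}(E,s)=\prod_{\lambda}L_K(E,\lambda,s),
\]
valid up to finitely many Euler factors, where $\lambda$ ranges over the complex characters of $\Gal(K'/K)$ (this is the usual decomposition coming from inducing the trivial character from $K'$ to $K$). Since the missing local factors do not vanish at $s=1$, the hypothesis $L_{K'}(E,1)\neq 0$ forces $L_K(E,\lambda,1)\neq 0$ for every such $\lambda$, and Lemma~\ref{eigen-sel-lem} then yields $\Sel_p(E/K')^\lambda=0$ for every $\lambda$.

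Next I would note that $[K':K]$ divides $h^+(c)$, so by condition $2$ of Assumption~\ref{ass} it is prime to $p$. Thus Maschke's theorem applies to the semisimple $\kappa_p[\Gal(K'/K)]$-module $\Sel_p(E/K')\otimes_{\Z}\kappa_p$, which decomposes as the direct sum of its $\lambda$-isotypic components. All summands being zero by the previous step, we conclude $\Sel_p(E/K')=0$, which settles the case $n=1$ of the theorem.

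The passage from $n=1$ to arbitrary $n$ is formal and proceeds by induction. Assuming $\Sel_{p^{n-1}}(E/K')=0$, consider the short exact sequence of $G_{K'}$-modules
\[
0\longrightarrow E[p^{n-1}]\longrightarrow E[p^n]\stackrel{p^{n-1}}{\longrightarrow} E[p]\longrightarrow 0.
\]
Surjectivity of $\rho_{E,p}$ (condition $3$ of Assumption~\ref{ass}) gives $E(K')[p]=0$, so the associated long exact sequence in Galois cohomology, combined with the fact that Selmer local conditions are compatible with this map, produces an exact sequence of Selmer groups
\[
0\longrightarrow \Sel_{p^{n-1}}(E/K')\longrightarrow \Sel_{p^n}(E/K')\longrightarrow \Sel_p(E/K').
\]
Both outer terms vanish, hence so does the middle one, completing the induction. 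The only point that requires care is the compatibility of the Selmer conditions with the connecting homomorphisms at every place of $K'$; this is a routine local Kummer computation, entirely analogous to the one underlying Theorem~\ref{main2-thm}, and is where I would expect to have to be slightly attentive (particularly at the primes dividing $Np$), but no new idea beyond what is already in Section~\ref{proofs-sec} is needed.
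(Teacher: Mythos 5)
Your proof of the $n=1$ case is essentially identical to the paper's: the same factorization $L_{K'}(E,s)=\prod_\lambda L_K(E,\lambda,s)$, the same appeal to Lemma \ref{eigen-sel-lem} for each character, and the same Maschke-theorem decomposition of $\Sel_p(E/K')\otimes_\Z\kappa_p$ into $\lambda$-eigenspaces (using $p\nmid[K':K]$, which follows from condition $2$ of Assumption \ref{ass}). The paper disposes of the passage to arbitrary $n$ with the single sentence ``Routine algebraic considerations show that it is enough to prove the result for $n=1$,'' pointing implicitly to \cite[Theorem 9.8]{LV}, whereas you spell out the d\'evissage via the short exact sequence $0\to E[p^{n-1}]\to E[p^n]\to E[p]\to0$; your resulting Selmer sequence is correct (exactness in the middle follows from injectivity of the local Kummer maps, which forces the relevant local point to lie in $pE(K'_v)/p^nE(K'_v)$), so this is a legitimate elaboration rather than a divergence. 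In short: same route, with one step made explicit that the paper leaves to the reader.
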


\begin{proof} Routine algebraic considerations show that it is enough to prove the result for $n=1$. For simplicity, set $G':=\Gal(K'/K)$. There is a factorization
\begin{equation} \label{L-fact-eq}
L_{K'}(E,s)=\prod_\lambda L_K(E,\lambda,s)
\end{equation}
where $\lambda$ varies over the complex-valued characters of $G'$. Now observe that the embeddings $\W_\chi\hookrightarrow\cO_{\Q_p^{\rm nr}}$ fixed before induce a bijection between the $\kappa_p$-valued and the $\C$-valued characters of $G'$. Therefore, since $p\nmid[K':K]$, Maschke's theorem ensures that there is a decomposition
\begin{equation} \label{eigen-decomp-eq}
\Sel_p(E/K')\otimes_\Z\kappa_p=\bigoplus_\lambda\Sel_p(E/K')^\lambda
\end{equation}
as a direct sum of eigenspaces. Since $L_{K'}(E,1)\not=0$, equality \eqref{L-fact-eq} implies that $L_K(E,\lambda,1)\not=0$ for all $\lambda$, hence $\Sel_p(E/K')^\lambda=0$ for all $\lambda$ by Lemma \ref{eigen-sel-lem}. Since $\Sel_p(E/K')$ is a finite-dimensional $\F_p$-vector space, the theorem is an immediate consequence of \eqref{eigen-decomp-eq}. \end{proof}

As a piece of notation, for every integer $n\geq1$ let $\Sha_{p^n}(E/K')$ be the $p^n$-Shafarevich--Tate group of $E$ over $K'$. Theorem \ref{cons2-thm} immediately yields

\begin{corollary} \label{cons2-coro}
If $L_{K'}(E,1)\not=0$ then $\Sha_{p^n}(E/K')=0$ for all $n\geq1$ and $E(K')$ is finite.
\end{corollary}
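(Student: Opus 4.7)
The plan is to derive both assertions directly from Theorem \ref{cons2-thm} by means of the classical Kummer-theoretic descent exact sequence; no new input beyond formal algebra is required.

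First I would fix a prime $p$ satisfying Assumption \ref{ass}. Such primes exist in abundance: the factorization
\[ L_{K'}(E,s)=\prod_\lambda L_K(E,\lambda,s) \]
over the characters $\lambda$ of $\Gal(K'/K)$, already exploited in the proof of Theorem \ref{cons2-thm}, shows that the hypothesis $L_{K'}(E,1)\neq 0$ forces $L_K(E,\chi,1)\neq 0$ for every $\chi\in\widehat G_c$ induced by such a $\lambda$. Consequently condition $(4)$ of Assumption \ref{ass} (together with the remaining conditions, which already exclude only finitely many primes) rules out only finitely many $p$, so the set of admissible primes is cofinite and in particular non-empty.

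For such a $p$, I would then invoke the standard descent exact sequence
\[ 0\longrightarrow E(K')/p^nE(K')\longrightarrow\Sel_{p^n}(E/K')\longrightarrow\Sha_{p^n}(E/K')\longrightarrow 0 \]
associated with multiplication by $p^n$ on $E$. Theorem \ref{cons2-thm} yields the vanishing of the middle term for every integer $n\geq 1$, so by exactness both outer terms vanish, giving in particular $\Sha_{p^n}(E/K')=0$ for all $n\geq 1$, which is the first half of the corollary.

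For the finiteness of $E(K')$, I would combine the vanishing of $E(K')/pE(K')$ obtained above with the Mordell--Weil theorem, which gives $E(K')\simeq E(K')_{\text{tors}}\oplus\Z^r$ for some integer $r\geq 0$. The vanishing modulo $p$ just established forces $\Z^r/p\Z^r=0$ and hence $r=0$, so $E(K')$ coincides with its torsion subgroup, which is finite since $E(K')$ is finitely generated. There is no substantial obstacle here: the argument is purely formal, and the only point to verify is the existence of at least one admissible prime $p$, for which the cofiniteness of Assumption \ref{ass} under the running analytic hypothesis is enough.
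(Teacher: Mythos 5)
Your argument is correct and is essentially the one the paper has in mind: the paper simply asserts that Theorem \ref{cons2-thm} "immediately yields" the corollary, and your write-up fills in the standard descent sequence and Mordell--Weil step that make that immediacy explicit.
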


This is the conjecture of Birch and Swinnerton-Dyer for $E$ over $K'$ in analytic rank $0$.

\begin{remarkwr}
1) The Birch and Swinnerton-Dyer conjecture for $E$ over $K'$ in analytic rank $0$ can also be obtained directly from Theorem \ref{cons1-thm} via a decomposition argument analogous to the one used in the proof of Theorem \ref{cons2-thm}.

2) If $K'=K$ then Theorem \ref{cons2-thm} is part of a result due to Kolyvagin (a sketch of proof of which can be found in \cite[Theorem 9.11]{LV}) establishing (unconditionally) the finiteness of $E(K)$ and $\Sha(E/K)$ for all quadratic fields $K$ such that $L_K(E,1)\not=0$. The key ingredients in Kolyvagin's proof of this theorem are non-vanishing results for the special values of the first derivatives of base changes of $L(E,s)$ to suitable auxiliary \emph{imaginary} quadratic fields and Kolyvagin's results in rank one. In light of this, even in the particular case where $K'=K$ our proof of Theorem \ref{cons2-thm}, albeit conditional, is genuinely new, since it takes place entirely ``in rank zero'' and in the \emph{real} quadratic setting, without invoking any result over imaginary quadratic fields.

3) It should be possible, with some extra effort, to extend the techniques of this article and obtain the finiteness of the full Shafarevich--Tate groups $\Sha(E/K')$.
\end{remarkwr}

\end{document}